\documentclass[reqno,11pt, a4paper]{amsart}

\usepackage[T1]{fontenc}

\usepackage{amssymb}
\usepackage{amsmath}
\usepackage{amsfonts}
\usepackage{amsthm}
\usepackage{mathabx}

\usepackage{mathtools}

\usepackage{fullpage}
\usepackage[dvipsnames]{xcolor}
\usepackage{graphicx}
\usepackage{float}
\usepackage{soul}

\usepackage{mdframed}
\usepackage{parskip}
\usepackage{enumitem}

\usepackage{booktabs,array}
\usepackage{url} 
\usepackage[colorlinks=true,
linkcolor=black]{hyperref}

\usepackage{pgfplots}
\pgfplotsset{compat=1.6}

 \usepackage[sort&compress,square,comma,numbers]{natbib}

\usetikzlibrary{calc,quotes,angles,arrows.meta,positioning, fillbetween}
\usetikzlibrary{decorations.pathreplacing,decorations.markings,snakes,calligraphy}
\usetikzlibrary{intersections, pgfplots.fillbetween}

\usepackage{float}

\usepackage{caption}
\usepackage{subcaption}

\theoremstyle{plain}%
\newtheorem{theorem}{Theorem}[section]
\newtheorem{lemma}[theorem]{Lemma}
\newtheorem{proposition}[theorem]{Proposition}

\newtheorem*{conjecture*}{Conjecture}

 \numberwithin{equation}{section}

\theoremstyle{definition}

\theoremstyle{remark}
\newtheorem{remark}[theorem]{Remark}

 \let \leq \leqslant
 \let \geq \geqslant

\DeclareMathOperator{\area}{area}
\DeclareMathOperator{\tr}{tr}

\DeclareMathOperator{\support}{supp}

\DeclareMathOperator{\dist}{dist}

\definecolor{detailcolor00}{rgb}{0.4405, 0.204, 0.343}
\definecolor{detailcolor01}{rgb}{0.546, 0.215, 0.352}
\definecolor{detailcolor02}{rgb}{0.675, 0.247, 0.387} 
\definecolor{detailcolor03}{rgb}{0.775, 0.317, 0.455}
\definecolor{detailcolor04}{rgb}{0.830, 0.421, 0.553} 
\definecolor{detailcolor05}{rgb}{0.831, 0.533, 0.663}
\definecolor{detailcolor06}{rgb}{0.779, 0.619, 0.775}
\definecolor{detailcolor07}{rgb}{0.724, 0.694, 0.827}
\definecolor{detailcolor08}{rgb}{0.687, 0.770, 0.880}
\definecolor{detailcolor09}{rgb}{0.671, 0.839, 0.904}
\definecolor{detailcolor10}{rgb}{0.659, 0.872, 0.882}

\definecolor{detailcolor00}{RGB}{37,37,60}
\definecolor{detailcolor05}{RGB}{18,52,86}
\definecolor{detailcolor02}{RGB}{105,41,89}
\definecolor{detailcolor03}{RGB}{165,141,0} 
\definecolor{detailcolor01}{RGB}{255,102,51} 
\definecolor{detailcolor06}{RGB}{255,102,51} 
\definecolor{detailcolor07}{RGB}{232,224,224} 
\definecolor{detailcolor08}{RGB}{232,224,224}
\definecolor{lightgraydove}{RGB}{18,52,86}

\usepackage{scalerel,stackengine}

\ifdim\overfullrule>0pt 
  \usepackage{environ}

  \NewEnviron{tikzpicture}{%
    \begin{pgfpicture}
    \pgfpathrectanglecorners{\pgfpointorigin}{\pgfpoint{3cm}{3cm}}%
     \pgfusepath{stroke}\end{pgfpicture}%
  }
\fi

\makeatletter
\newcommand{\hathat}[1]{%
\begingroup%
  \let\macc@kerna\z@%
  \let\macc@kernb\z@%
  \let\macc@nucleus\@empty%
  \hat{\raisebox{.3ex}{\vphantom{\ensuremath{#1}}}\smash{\hat{#1}}}%
\endgroup%
}

\newcommand{\smallhathat}[1]{%
\begingroup%
  \let\macc@kerna\z@%
  \let\macc@kernb\z@%
  \let\macc@nucleus\@empty%
  \hat{\raisebox{.05ex}{\vphantom{\ensuremath{#1}}}\smash{\hat{#1}}}%
\endgroup%
}

\newcommand{\smallsmallhathat}[1]{%
\begingroup%
  \let\macc@kerna\z@%
  \let\macc@kernb\z@%
  \let\macc@nucleus\@empty%
  \hat{\raisebox{-.2ex}{\vphantom{\ensuremath{#1}}}\smash{\hat{#1}}}%
\endgroup%
}
\makeatother

\makeatletter
\newcommand{\subalign}[1]{%
  \vcenter{%
    \Let@ \restore@math@cr \default@tag
    \baselineskip\fontdimen10 \scriptfont\tw@
    \advance\baselineskip\fontdimen12 \scriptfont\tw@
    \lineskip\thr@@\fontdimen8 \scriptfont\thr@@
    \lineskiplimit\lineskip
    \ialign{\hfil$\m@th\scriptstyle##$&$\m@th\scriptstyle{}##$\hfil\crcr
      #1\crcr
    }%
  }%
}
\makeatother

\title{Phase transitions in the charged compact abelian lattice Higgs model}

\author{Malin P. Forsstr\"om}
\address[Malin P. Forsstr\"om]{Department of Mathematics, Chalmers University of Technology and University of Gothenburg, Sweden}
\email{palo@chalmers.se}

\begin{document}

\maketitle 

\begin{abstract}
	We consider the (compact) abelian lattice Higgs model with charge \( k \geq 1 \) and show, using charged Wilson~loop observables and charged versions of the Marcu--Fredenhagen ratio, that this model exhibits several distinct phase transitions. In particular, we show that if \(k = 2\), then the Marcu--Fredenhagen ratio and Wilson~loop observables together can distinguish among three distinct phases of the parameter space, and hence both can be used as order parameters for the model. 
\end{abstract}

\section{Introduction}

 
Lattice gauge theories are spin models on~\( \mathbb{Z}^d \) that arise as a natural discretization of the Yang-Mills model, which is a classical model for the standard model in physics~\citep{w1974,w1971,c2018}. From a mathematical perspective, lattice gauge theories are interesting for several reasons. First, the mathematical framework for studying the Yang-Mills model itself does not yet exist; while lattice models seem to exhibit many of the phenomena expected of their continuous counterparts~\citep{c2018}. At the same time, lattice gauge theories naturally fall into the family of classical spin models for which great progress has been made in the mathematical literature, such as percolation theory, the Ising model, and the XY model; while at the same time being fundamentally different since the random paths that often arise in spin models are here naturally replaced by random surfaces. 
Finally, we mention that lattice gauge theories also arise naturally in quantum information theory, see, e.g., \cite{ssn2021,spkl2026}.
What behavior a lattice gauge theory exhibit depends of several different properties of the model different properties of the model: the dimension and boundary conditions of the lattice considered, the group in which the spins take their values, and whether or not the model includes an external field. The recent mathematical literature on lattice gauge theories reflects this diversity. In particular, the papers~\citep{a2021, fv2025, fv2025b, f2024a, f2024b, f2025, f2022b,flv, flv2022, c2020, ca2025, ds2024} study the behavior for finite groups with no external field, the papers~\citep{gs2023, g1980, gm1982} consider \( U(1) \), and~\citep{bn1987,c2025, b1990, fs1982, k1985, k1987, os1978, fs1979, bbij1984} consider the same models with an external field. In this paper, we complement this by studying \( U(1) \) lattice gauge theory on \( \mathbb{Z}^m, \) \( m \geq 4, \) in an external field with a charge \(k, \) using both the charged Marcu--Fredenhagen ratio and charged Wilson~loops to verify the structure of its phase diagram as described in the physics literature (see, e.g., \citep{g2011}). This model has to our knowledge not been studied in the mathematical literature before except in the case \( k =1, \) when it reduces to the compact abelian lattice Higgs model, i.e., to \( U(1) \) lattice gauge theory with an external field.


For an abelian group \( G \), known as the structure or gauge group, and a unitary representation~\( \rho \) of \( G, \) the Hamiltonian corresponding to a lattice gauge theory on \( B_N \coloneqq \mathbb{Z}^m \cap [-N,N]^m \) with free boundary conditions is given by
\begin{equation}\label{eq: pure hamiltonian}
	H_{N,\beta}(\sigma) \coloneqq -\beta \sum_{p\in C_2(B_N)} \tr \rho(d\sigma(p)) ,\quad \sigma \in \Omega_1(B_N,G).
\end{equation}
Here \( \beta \geq 0, \) \( d \) is the discrete derivative \( d \sigma(p ) = \sum_{ e \in \partial p} \sigma(e) \), \( \Omega_1(B_N,G) \) is the set of all \( G \)-valued 1-forms on \( B_N ,\) or equivalently, the set of all \( G \)-values functions~\( \sigma \) defined on the set of oriented edges \( C_1(B_N) \) in \( B_N \) with the property that \( \sigma(e)  = - \sigma(-e)  \) for all \( e \in C_1(B_N), \) and \( C_2(B_N) \) is the set of oriented plaquettes in \( B_N. \) Together with a uniform reference measure \( \mu \), this describes a probability measure \( \mu_{N,\beta} \) on \( \Omega_1(B_N,G), \) and the model described by this measure is what is referred to as a~\emph{lattice gauge theory.} The corresponding expectation is denoted by \( \mathbb{E}_{N,\beta} \) and the infinite volume limit \( N \to \infty \) of the expectation by \( \langle \cdot \rangle_{\beta}. \)

The most important observables in pure gauge theories are Wilson loops, which we now introduce. Throughout this paper, we will refer to a set of oriented edges as a \emph{path}. Using the language of discrete exterior calculus, a path is equivalently a \( \{ -1,0,1 \} \)-valued \( 1 \)-chain. The \emph{Wilson~line observable} corresponding to a path \( \gamma \) is defined by 
\begin{equation*}
	W_\gamma(\sigma) \coloneqq \tr \rho(\sigma(\gamma)) \coloneqq  \tr \prod_{e \in \gamma} \rho(\sigma(e)), \quad \sigma \in \Omega_1(B_N,G).
\end{equation*}
When the edges of a path forms a loop, we refer to the 1-chain \( \gamma \) as a \emph{loop}, and in this special case the Wilson~line observable is referred to as a \emph{Wilson~loop observable}. A path that does not form a loop is called an \emph{open path}.

Throughout the paper, we will assume that \( G=U(1) \) and that  \( \rho \) is the one-dimensional representation given by \( \rho \colon \theta \mapsto e^{i \theta}\), \( \theta \in U(1). \) This is the simplest gauge group of physical relevance, as with this choice the model described by~\eqref{eq: pure hamiltonian} corresponds to electromagnetism. On \( \mathbb{Z}^2 \) and \( \mathbb{Z}^3, \) when \( G= U(1), \) Wilson~loops are known to always follow an area law~\citep[Corollary 2]{gm1982}. In contrast, on \( \mathbb{Z}^4, \) Wilson~loops are known to undergo a phase transition, known as a deconfinement transition, with Wilson~loops having area law for small \( \beta \) and perimeter law for large \( \beta\) \citep{os1978, g1980, fs1982}. Here perimeter law means that there are constants \( C,c>0 \) such that \( \langle W_\gamma \rangle_{\beta} \sim Ce^{-c|\gamma|} \) and area law means that there are constants \( C,c>0 \) such that \( \langle W_\gamma \rangle_{\beta} \sim Ce^{-c \area(\gamma)}. \)

The lattice Higgs model is the model obtained by coupling a lattice gauge theory to an external field, modeling a Higgs field. For \( G = U(1), \) this model is known as the \emph{(compact) abelian lattice Higgs model}, and has Hamiltonian given by 
\begin{equation}\label{eq: hamiltonian compact abelian lattice higgs}
	H_{N,\beta,\kappa}(\sigma) \coloneqq -\beta \sum_{p\in C_2(B_N)} \rho(d\sigma(p)) - \kappa \sum_{e \in C_1(B_N)} \rho(\sigma(e)),\quad \sigma \in \Omega_1(B_N,U(1)).
\end{equation}
We let \( \mathbb{E}_{N,\beta,\kappa} \) denote the corresponding expectation for a uniform reference measure \( \mu \), and let~\( \langle \cdot \rangle_{\beta,\kappa} \) denote the corresponding infinite volume limit \( N \to \infty\), which exists due to the Ginibre inequalities, see, e.g,~\citep{f2022b}[Section 2.6].

In the model described by~\eqref{eq: hamiltonian compact abelian lattice higgs}, using the Griffiths-Hurst-Sherman inequality twice, one easily shows that for any Wilson~loop observable \( W_\gamma, \) we have
\begin{equation*}
	\langle W_\gamma \rangle_{\beta,\kappa} \geq \prod_{e \in \gamma} \langle W_e \rangle_{\beta,\kappa} \geq \prod_{e \in \gamma} \langle W_e \rangle_{0,\kappa} = \Bigl( \frac{e^{2 \kappa}-e^{2 \kappa}}{e^{2 \kappa}+e^{-2 \kappa}}\Bigr)^{|\gamma|} = (\tanh 2\kappa)^{|\gamma|},
\end{equation*} 
and hence for any \( \kappa > 0, \) Wilson~loop observables obey a perimeter law. In particular, this implies that Wilson~loop observables cannot be used as order parameters in the lattice Higgs model in the same way as they can in a pure lattice gauge theory. As a consequence, other order perimeters have been suggested in the physics literature. In this paper, we consider the one such order parameter, originally proposed by Fredenhagen and Marcu in~\citep{fm1983}, and use current expansions and ideas from disagreement percolation to give a rigorous proof that this order parameter has a phase transition.  
To define this order parameter, let \( \gamma_n \) and \( \gamma_n' \) be two paths as in Figure~\ref{figure: Wilson lines MF}. 
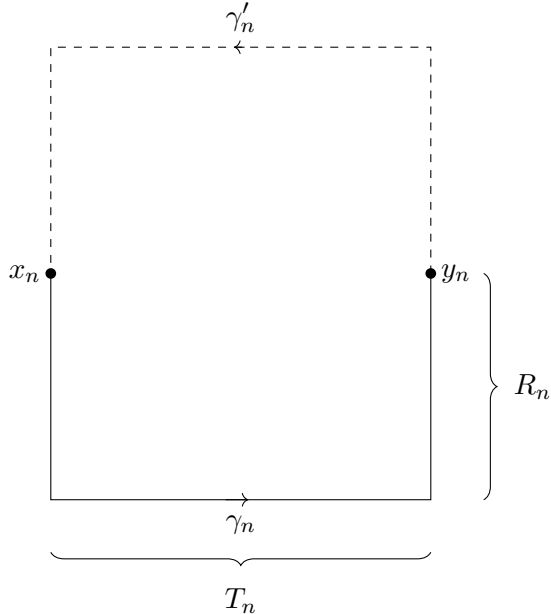
\begin{figure}[h]
	\begin{tikzpicture} 
		
		\draw (0,0) node[left] {$x_n$}  -- (0,-3)-- (5,-3) node[midway,below, yshift=-2pt] {$\gamma_n$}  -- (5,0)  node[right] {$y_n$};
		
		\draw[dashed] (5,0) -- (5,3) -- (0,3) node[midway,above,yshift=2pt] {$\gamma_n'$} -- (0,0);  
		
		\fill (0,0) circle (2pt);
		\fill (5,0) circle (2pt);
		
		\draw [decorate,decoration={brace,amplitude=5pt,mirror,raise=4.2ex}]
  (0,-3) -- (5,-3) node[midway,yshift=-3.5em]{$T_n$};
		
		\draw [decorate,decoration={brace,amplitude=5pt,mirror,raise=4.2ex}]
  (5,-3) -- (5,0) node[midway,xshift=3.5em]{$R_n$};
  
  		\draw[->] (2.3,-3) -- (2.6,-3);
  		\draw[->] (2.55,3) -- (2.41,3);
	\end{tikzpicture}
	
	\caption{The paths \( \gamma_n \) (solid) and \( \gamma_n' \) (dashed) used in the definition of the Marcu--Fredenhagen ratio.}
	\label{figure: Wilson lines MF}
	\end{figure}
	The ratio
	\begin{equation}\label{eq: ratio MF} 
		r_n \coloneqq r_n(\beta,\kappa) \coloneqq \frac{\langle W_{\gamma_n} \rangle_{\beta,\kappa}\langle W_{\gamma_n'} \rangle_{\beta,\kappa}}{\langle W_{\gamma_n+\gamma_n'} \rangle_{\beta,\kappa}},
	\end{equation}
	is then referred to as the Marcu--Fredenhagen ratio, and was introduced in~\citep{fm1983} (see also~\cite{fm1988}) to \emph{``study a sequence of states which describe a pair of charges separated by an increasing distance and which are regularized such that their energy is uniformly bounded''}. When \( R_n = R n \) and \( T_n = T n \) for some \( R,T \in \mathbb{N}, \) this ratio believed to have a phase transition between a region, referred to as the free phase, where \( \lim_{n \to \infty} r_n = 0,\) and a region referred to as the Higgs/confinement phase, where \( \liminf_{n \to \infty} r_n >0. \)	
	For an overview of rigorous results about this ratio and about the abelian lattice Higgs model in general, we refer the reader to~\citep{bn1987} (see also~\citep{fs1979} and~\citep{g2011}). 
	
The first main result of this paper, Theorem~\ref{theorem: U1 Wilson line ratios MF} below, confirms that the Marcu--Fredenhagen ratio indeed has a phase transition.
\begin{theorem}\label{theorem: U1 Wilson line ratios MF}
	Let \( m \geq 4, \) let \( \gamma_n \) and \( \gamma_n' \) be as in Figure~\ref{figure: Wilson lines MF}, let \( R_n = R n \) and \( T_n = T n \), and consider the Marcu--Fredenhagen ratio \( r_n \) as defined in~\eqref{eq: ratio MF}. Then the following holds.
	\begin{enumerate}[label=(\alph*)]
		\item\label{item: U1 confinement MF} There is \( \beta_{conf} > 0 \)  such that if \( \kappa >0  \) and \( \beta \in (0, \beta_{conf}),\) then \( \liminf_{n \to \infty} r_n >0. \)
		\item\label{item: U1 Higgs MF} There is \( \kappa_{Higgs}> 0 \) such that if \( \kappa > \kappa_{Higgs} \) and \( \beta>0, \) then \( \liminf_{n \to \infty} r_n >0. \)
		\item\label{item: U1 free MF} There is \( \beta_{free}> 0 \) and \( \kappa_{free}> 0 \) such that if \( \kappa \in (0, \kappa_{free}) \) and \( \beta>\beta_{free}, \) then \( \liminf_{n \to \infty} r_n = 0. \)
	\end{enumerate} 
\end{theorem}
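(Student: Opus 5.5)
The approach is to rewrite all three expectations in~\eqref{eq: ratio MF} through a current (high-temperature / Fourier) expansion of the Gibbs weights. Write \( e^{\beta\cos d\sigma(p)} = \sum_{j\in\mathbb{Z}} I_j(\beta) e^{ij\, d\sigma(p)} \) and \( e^{\kappa\cos \sigma(e)} = \sum_{j} I_j(\kappa)e^{ij\sigma(e)} \). Integrating out \( \sigma \) then expresses \( \mathbb{E}_{N,\beta,\kappa}[W_\gamma] \) as a ratio \( Z[\gamma]/Z[0] \). Here \( Z[\gamma] \) is a weighted sum over integer-valued 2-chains \( q \) (plaquette currents) and 1-chains \( j \) (edge currents) satisfying the constraint \( \partial q + j = -\gamma \). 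Whenever \( \beta \) or \( \kappa \) is small, the weights \( I_j(\cdot)/I_0(\cdot) \) are small. Both the numerator and the denominator of \( r_n \) are then polymer-type sums, and the ratio compares configurations with a single source \( \gamma_n+\gamma_n' \) against the product of configurations for two separate sources \( \gamma_n \) and \( \gamma_n' \).

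\textbf{Part~\ref{item: U1 confinement MF}} (\( \beta \) small, \( \kappa>0 \) arbitrary). Here I would only integrate out the plaquette variables, keeping \( \kappa \) exact and treating the gauge part as a perturbation. The key bound is \( \langle W_{\gamma_n}\rangle\langle W_{\gamma_n'}\rangle \geq c\,\langle W_{\gamma_n+\gamma_n'}\rangle \). Every configuration contributing to \( Z[\gamma_n+\gamma_n'] \) consists of edge currents along the loop, plus plaquette currents that are exponentially costly, with weight about \( \beta^{\#} \).

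To obtain the bound, I would build an injection-type map. It cuts the loop's support at the two endpoints \( x_n,y_n \), sending a configuration for \( \gamma_n+\gamma_n' \) to a pair of configurations for \( \gamma_n \) and \( \gamma_n' \). The cost of this map should be only a bounded multiplicative factor, supported in neighbourhoods of \( x_n,y_n \), and the two halves should be asymptotically independent. That independence comes from cluster expansion / decay of correlations in the small-\( \beta \) regime: the model is then essentially a product measure over edges tilted by \( \kappa \), with exponentially decaying truncated correlations. Since \( \tanh\kappa>0 \) is fixed and perimeter factors appear identically in numerator and denominator, only boundary (endpoint) terms survive, giving \( \liminf r_n>0 \).

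\textbf{Part~\ref{item: U1 Higgs MF}} (\( \kappa \) large). This is the same argument in unitary gauge. For large \( \kappa \) the edge spins concentrate near \( 0 \), so I would expand around \( \sigma\equiv 0 \) using disagreement percolation. Each edge disagrees with \( 0 \) with probability \( O(e^{-c\kappa}) \) uniformly in \( \beta \), because the plaquette term only favours \( d\sigma=0 \), which the zero configuration already achieves. The disagreement clusters are then subcritical. Each Wilson line factorizes into local edge contributions up to cluster corrections, and the clusters touching both \( \gamma_n \) and \( \gamma_n' \) give only an \( O(1) \) correction near \( x_n,y_n \). The key is to make the domination uniform in \( \beta \); I would try a Peierls-type comparison on edges.

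\textbf{Part~\ref{item: U1 free MF}} (\( \beta \) large, \( \kappa \) small). Here I would do a current expansion in \( \kappa \) only, keeping the pure \( U(1) \) gauge measure at large \( \beta \) in \( m\ge4 \) dimensions. Then \( \langle W_{\gamma_n+\gamma_n'}\rangle \) is dominated by the empty edge current, which gives roughly the pure-gauge perimeter law \( e^{-c_\beta|\gamma_n+\gamma_n'|} \) from the spin-wave/Villain estimates~\citep{fs1982,g1980}. By contrast, an open line \( \gamma_n \) must be closed by an edge current \( j \) with \( \partial j=\partial\gamma_n \). This forces a path of length at least \( |x_n-y_n| = Tn \) carrying weight \( (\kappa/2)^{Tn} \). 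Alternatively, it is screened through a combination whose cost is at least \( e^{-c\,Tn\log(1/\kappa)} \) beyond the perimeter cost. So each \( \langle W_{\gamma_n}\rangle\lesssim e^{-c'|\gamma_n|}\,(C\kappa)^{Tn} \), and for \( \kappa \) small enough this beats the gain from splitting the loop. Hence \( r_n\to0 \).

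The main obstacle is this last step. It needs a uniform lower bound on the loop expectation together with control of the large-\( \beta \) pure gauge measure in the presence of sources. I would first try to pass to the Villain approximation and use the Fröhlich--Spencer-type Gaussian domination, comparing the two perimeter constants exactly.
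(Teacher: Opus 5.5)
Your proposal has genuine gaps in parts (c) and (b). Part (a) points in a workable direction but is not yet an argument.

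\textbf{Part (c).} The decisive gap is the step you flag yourself. Expanding only the \(\kappa\)-factors gives
\[
\langle W_{\gamma}\rangle_{\beta,\kappa}=\frac{\sum_{\mathbf{n}}\prod_{e}I_{\mathbf{n}_e}(2\kappa)\,\langle W_{\gamma+\mathbf{n}}\rangle_{\beta}}{\sum_{\mathbf{n}}\prod_{e}I_{\mathbf{n}_e}(2\kappa)\,\langle W_{\mathbf{n}}\rangle_{\beta}},
\]
where \(\langle\cdot\rangle_\beta\) is the pure gauge expectation. Closed edge currents form a loop gas in both sums, weighted by different pure-gauge expectations. Keeping only \(\mathbf{n}=0\) for the loop, or using \(|\langle W_{\gamma_n+\mathbf{n}}\rangle_\beta|\le 1\) for the open line, loses factors exponential in the volume. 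So neither ``the loop is dominated by the empty edge current'' nor \(\langle W_{\gamma_n}\rangle\lesssim e^{-c'|\gamma_n|}(C\kappa)^{Tn}\) follows.

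The missing idea is Ginibre/GHS monotonicity in both couplings, which is how Proposition~\ref{proposition: free} proceeds.
\begin{itemize}
\item Numerator: \(\langle W_{\gamma_n}\rangle_{\beta,\kappa}\le\langle W_{\gamma_n}\rangle_{\infty,\kappa}\). At \(\beta=\infty\) one has \(\sigma=d\eta\) with \(\eta\) an XY model at coupling \(\kappa\), so this is the two-point function \(\langle e^{i(\eta(y_n)-\eta(x_n))}\rangle\le e^{-c_\kappa Tn}\), with \(c_\kappa\to\infty\) as \(\kappa\downarrow 0\).
\item Denominator: \(\langle W_{\gamma_n+\gamma_n'}\rangle_{\beta,\kappa}\ge\langle W_{\gamma_n+\gamma_n'}\rangle_{\beta,0}\ge Ce^{-c'_\beta(2Tn+4Rn)}\). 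Here \(c'_\beta\) is nonincreasing in \(\beta\) (GHS again) and finite above the pure-gauge perimeter-law threshold \citep{fs1982,g1980}.
\end{itemize}
Hence \(r_n\lesssim\exp\bigl(-n(2Tc_\kappa-(2T+4R)c'_{\beta_{free}})\bigr)\to 0\) for all \(\beta>\beta_{free}\) once \(\kappa\) is small. No perimeter factor in the numerator, no Villain approximation, and no exact comparison of perimeter constants is needed.

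\textbf{Part (b).} Disagreement percolation around \(\sigma\equiv 0\) is not available here. For \(U(1)\) spins \(\sigma(e)\neq 0\) almost surely, so there is no sparse disagreement set. A domination argument also needs the conditional law of \(\sigma(e)\), given \emph{arbitrary} neighbouring values, to be close to a fixed law. When \(\beta\gg\kappa\) the plaquette terms instead pull \(\sigma(e)\) towards whatever the neighbours dictate, since every pure gauge \(d\eta\), not only \(\sigma\equiv 0\), has \(d\sigma=0\). Heuristically the unitary-gauge field has mass of order \((\kappa/\beta)^{1/2}\), so you should not expect a subcritical-cluster argument that is uniform in \(\beta\).

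The paper treats this regime with the Osterwalder--Seiler expansion of Proposition~\ref{proposition: polymer expansion}. It expands \(\prod_p\bigl(1+(e^{2\beta(\Re\rho(d\sigma(p))-1)}-1)\bigr)\) against the \(\kappa\)-tilted product measure on edges. Lemma~\ref{lemma: upper Holder bound 1} makes the signed activities small once \(\kappa\) is large given \(\beta\), and the cluster expansion of \citep{f2024b} concludes. This yields a threshold \(\kappa_{Higgs}(\beta)\) depending on \(\beta\), as in Theorem~\ref{theorem: main result charge div}\ref{theorem: main result div higgs}. The \(\beta\)-uniform threshold in the statement is not delivered by the paper's route either.

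\textbf{Part (a).} Expanding only the plaquette factors with \(\kappa\) exact is a sensible variant of the paper's full current expansion. All weights stay positive since \(I_n(2\kappa)>0\). It also avoids the \(e^{4\kappa}\) loss in the smallness condition of Proposition~\ref{proposition: confinement}, at the price of factors \(I_0(2\kappa)/I_1(2\kappa)\) on source edges absorbed by a cluster.

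The switching map, however, must act on \emph{pairs}. It sends \((\mathbf{n}_1,\mathbf{n}_2)\) with sources \((\gamma_n+\gamma_n',0)\) to a pair with sources \(\gamma_n\) and \(\gamma_n'\), by exchanging the two configurations on the clusters of \(\mathcal{G}(\mathbf{n}_1+\mathbf{n}_2)\) that touch \(\gamma_n\). A one-to-two map, as you describe, loses the vacuum factor \(Z[0]\) in \(r_n=Z[\gamma_n]Z[\gamma_n']/(Z[\gamma_n+\gamma_n']Z[0])\). With the pairing in place, what remains is exactly the bound on clusters joining \(\gamma_n\) to \(\gamma_n'\). Your ``bounded cost'' and ``asymptotic independence'' assert this bound but do not prove it.
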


\begin{remark}
 	A proof sketch of Theorem~\ref{theorem: U1 Wilson line ratios MF}\ref{item: U1 confinement MF}\ref{item: U1 Higgs MF} appear already in in~\citep{fs1979}. %
	A proof of Theorem~\ref{theorem: U1 Wilson line ratios MF}\ref{item: U1 free MF} appear also in\citep{fm1983} as well as in~\citep[Section 1.2]{b1990} and~\citep{b1990}, and a proof of Theorem~\ref{theorem: U1 Wilson line ratios MF} also appear in~\citep{k1985}. This statement does not hold if \( m = 3, \) and the arguments which work for \( m = 4 \) fail since they all use that there are \( \beta>0 \) such that the model has perimeter law, and such \( \beta \) does not exist when \( m =3. \) 
\end{remark}


A richer version of the abelian lattice Higgs model is the \emph{compact abelian lattice Higgs model with charge \( k \) matter} \citep{g2011}, described by the Hamiltonian 
\begin{equation}\label{eq: charged hamiltonian}
	H_{N, \beta, \kappa, k}(\sigma) \coloneqq -\beta  \sum_{p \in C_2(B_N)}  \rho(d\sigma(p)) - \kappa \sum_{e \in C_1(B_N)} \rho(\sigma(e))^k, \quad \sigma \in \Omega_1(B_N,\mathbb{Z}_2).
\end{equation}
Indeed, letting \( k = 0 \) (or \( \kappa = 0 \)) we recover the Hamiltonian of a pure \( U(1)\) lattice gauge theory, and letting \( k =1 \) we recover the Hamiltonian of the abelian lattice Higgs model. 
For a uniform reference measure \( \mu ,\) we let \( \mu_{N,\beta,\kappa,k} \) be the measure corresponding to~\eqref{eq: charged hamiltonian}, let \( \mathbb{E}_{N,\beta,\kappa,k} \) be the corresponding expectation, and let \( \langle \cdot \rangle_{\beta,\kappa,k} \) be its infinite volume limit.
Just as in \( U(1) \) lattice gauge theory or the abelian lattice Higgs model, it is natural to consider Wilson~loop and~line observables. Such observables can be made more general by introducing \emph{charges}, with charge \( j\geq 1 \) corresponding to the observable
	\begin{equation}
		W_{j\gamma}(\sigma) = \rho(\sigma(j\gamma)) = \rho(\sigma(\gamma))^j.
	\end{equation}
Another natural observable for this model is the \(j\)-charge analog of the Marcu--Fredenhagen ratio, which is given by
\begin{equation*}
	r_{n,k,j}(\beta,\kappa) \coloneqq \frac{\langle W_{j \gamma_n }\rangle_{\beta,\kappa,k} \langle W_{j \gamma_n' }\rangle_{\beta,\kappa,k} }{\langle W_{j (\gamma_n +\gamma_n')}\rangle_{\beta,\kappa,k} }.
\end{equation*}

The phase diagram of the abelian lattice Higgs model with charge \( \geq 2 \) is expected to be richer than that of the abelian lattice Higgs model with charge~1. For example, when~\( k = 2, \) the physics literature (see, e.g., \citep{fs1979,g2011}) suggests the existence of three distinct phases: 
\begin{enumerate}
	\item A confinement phase (\( \beta \) large and \( \kappa \) large), where \( {\langle W_{\gamma} \rangle_{\beta,\kappa,2} \sim Ce^{-c|\gamma|} }\),  \( {\langle W_{2\gamma} \rangle_{\beta,\kappa,2} \sim Ce^{-c|\gamma|},} \) and \( {\liminf_{n \to \infty} r_{n,2,2} > 0 }\).
	\item A Higgs phase (\( \beta \) small), where \( \langle W_{\gamma} \rangle_{\beta,\kappa,2} \sim Ce^{-c \area(\gamma)} \),  \(\langle W_{2\gamma} \rangle_{\beta,\kappa,2} \sim Ce^{-c|\gamma|}, \) and \( \liminf_{n \to \infty} r_{n,2,2} > 0 \).
	\item A free phase (\( \beta \) large and \( \kappa \) small), where \( \langle W_{\gamma} \rangle_{\beta,\kappa,2} \sim Ce^{-c|\gamma|} \),  \(\langle W_{2\gamma} \rangle_{\beta,\kappa,2} \sim Ce^{-c|\gamma|} \), and \( \lim_{n \to \infty} r_{n,2,2} = 0 \). 
\end{enumerate}

Our last two main results, Theorem~\ref{theorem: main result charge nondiv} and Theorem~\ref{theorem: main result charge div} below, confirm this picture for general~\( k\), and correspond to the two cases \( k \not \mid j \) and \( k \mid j \) respectively. The first of these theorems, Theorem~\ref{theorem: main result charge nondiv}, concerns the case \( k \not \mid j, \) and show that in this case, the charged Marcu--Fredenhagen ratio cannot detect phase transition. The reason for this is that in this case, Wilson~line observables have expectation zero exactly as when \( \kappa =0.\) As an important special case, it follows that when  \( k \geq 2 \) and \( j = 1, \) the regular Marcu--Fredenhagen ratio will not have a phase transition.

\begin{theorem}\label{theorem: main result charge nondiv}
	Let \(m \geq 4, \) let \( j ,k\geq 1\) be such that \( k \not \mid j,\) and let \( \beta,\kappa >0. \) Further, let  \( R,T \geq 1, \) and for \( n \geq 1, \) let  \( \gamma_n \) be a rectangular loop with side lengths \( Rn \) and \( Tn \), and let \( \beta_c \) be so that pure lattice gauge theory with \( \beta> \beta_c \) has perimeter law. 
	Then the following holds. 
	\begin{enumerate}[label=(\alph*)]
		\item For any open path \( \gamma, \) we have  \( \langle W_{j \gamma} \rangle_{\beta,\kappa,k} = 0 \), and hence
		\( r_{n,k,j} = 0 \) for all \( n \geq 1. \) \label{theorem: always zero if not divisible}
		\item There is \( \beta_{conf} > 0 \), \( C = C(\beta,\kappa)>0, \) and \( c=c(\beta,\kappa) > 0 \) such that if \( \beta \in (0,\beta_{conf}) \), then   \( \langle W_{j\gamma_n} \rangle_{\beta,\kappa,k} \leq Ce^{-c \area(\gamma_n)}\) for all \( n \geq 1. \) \label{theorem: area law}
	\item There is \( C = C(\beta,\kappa)>0, \) and \( c=c(\beta,\kappa) > 0 \), such that if \( \beta  > \beta_c \), then \( {\langle W_{j \gamma_n} \rangle_{\beta,\kappa,k} \geq Ce^{-c|\gamma_n|}}\) for all \( n \geq 1. \)  \label{theorem: always a region with perimeter law nondiv}
	\end{enumerate}
\end{theorem}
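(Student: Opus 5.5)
For part~\ref{theorem: always zero if not divisible}, I would use a $\mathbb{Z}_k$ symmetry of the charged Hamiltonian. Fix a vertex $x$ that is an endpoint of the open path $\gamma$ but not a vertex of $\partial\gamma$ cancelled by the path itself; if $\gamma$ has several such endpoints, pick one where the $1$-chain $\partial\gamma$ has nonzero coefficient, which exists since $\gamma$ is not a loop. Let $\eta$ be the $0$-form equal to $2\pi/k$ at $x$ and $0$ elsewhere, and consider the gauge-type map $\sigma\mapsto\sigma+d\eta$. Then $d(\sigma+d\eta)=d\sigma$, so the plaquette term is unchanged. Each edge term $\rho(\sigma(e)+d\eta(e))^k=\rho(\sigma(e))^k e^{\pm 2\pi i}=\rho(\sigma(e))^k$ is also unchanged. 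The reference measure is invariant, so $\mu_{N,\beta,\kappa,k}$ is invariant under this map for every $N$ with $x\in B_N$. On the other hand,
\[
W_{j\gamma}(\sigma+d\eta)=W_{j\gamma}(\sigma)\,e^{2\pi i j a/k},
\]
where $a\neq 0$ is the coefficient of $x$ in $\partial\gamma$. Since $|a|\le 2$, I will choose $x$ so that $a=\pm1$; an endpoint of a simple path has $a=\pm 1$, and in general one can use a vertex of odd degree. Because $k\nmid j$, the factor $e^{2\pi i j/k}$ is not $1$. Hence $\mathbb{E}_{N,\beta,\kappa,k}[W_{j\gamma}]=0$ for all large $N$, and therefore also in the limit. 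Applying this to $\gamma_n$ and $\gamma_n'$ makes the numerator of $r_{n,k,j}$ vanish. The denominator is positive by the Ginibre inequalities, so $r_{n,k,j}=0$.

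For part~\ref{theorem: area law}, I would run a high-temperature (small $\beta$) cluster or character expansion. Expand $e^{\beta\cos d\sigma(p)}$ in characters $\sum_{\ell}I_\ell(\beta)e^{i\ell d\sigma(p)}$, and also expand each edge weight $e^{\kappa\cos(k\sigma(e))}$ as $\sum_{q}I_q(\kappa)e^{iqk\sigma(e)}$. Integrating out $\sigma$ forces the edge currents to satisfy a constraint: the current on each edge must equal $\partial$ of the plaquette currents plus $j\gamma$, and it must lie in $k\mathbb{Z}$. Reducing modulo $k$, the plaquette $2$-chain $\omega$ must satisfy $\partial\omega\equiv -j\gamma \pmod k$. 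Since $j\not\equiv 0$ mod $k$, the support of $\omega$ mod $k$ is a $\mathbb{Z}_k$-surface with boundary $\gamma$, and so it has at least $\area(\gamma_n)$ plaquettes. Each nonzero plaquette carries weight $I_\ell(\beta)/I_0(\beta)=O(\beta)$, while the edge sums are uniformly bounded relative to the partition function. A Peierls or polymer-expansion bound then gives $\langle W_{j\gamma_n}\rangle\le C(C'\beta)^{\area(\gamma_n)}$ for small $\beta$. I expect this to be the main obstacle: controlling the edge ($\kappa$) sums uniformly in $\kappa$, and the entropy of surfaces, requires a cluster expansion in which the $\kappa$ contributions are absorbed into polymer activities. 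I would first try conditioning on $\sigma$ modulo $2\pi/k$. Concretely, write $\sigma=\tau+\xi$ with $\xi\in(2\pi/k)\mathbb{Z}_k$; the edge term depends only on $\tau$, so the $\xi$-marginal is a $\mathbb{Z}_k$-valued pure gauge theory with effective plaquette couplings bounded by $\beta$. The standard $\mathbb{Z}_k$ small-$\beta$ area law then applies uniformly in $\tau$, since $W_{j\gamma}$ factorizes as $W_{j\gamma}(\tau)\,W_{j\gamma}(\xi)$.

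For part~\ref{theorem: always a region with perimeter law nondiv}, I would use Ginibre/GKS monotonicity in $\kappa$. The observable $W_{j\gamma_n}$ and all terms of the Hamiltonian are positive-type characters on the abelian group $U(1)^{C_1}$, so $\langle W_{j\gamma_n}\rangle_{\beta,\kappa,k}$ is nondecreasing in $\kappa$. This gives $\langle W_{j\gamma_n}\rangle_{\beta,\kappa,k}\ge\langle W_{j\gamma_n}\rangle_{\beta,0}$, which is the pure $U(1)$ gauge theory at charge $j$. For $\beta>\beta_c$ that theory has perimeter law for charge $j$ loops. This follows from the hypothesis on $\beta_c$, either applied directly to charge $j$ or via GKS in the form $\langle W_{j\gamma}\rangle\ge\langle W_\gamma\rangle^{j}$ if needed. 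The result is the bound $Ce^{-c|\gamma_n|}$.
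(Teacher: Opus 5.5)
Your proposal is correct. Part (c) is exactly the paper's argument: Ginibre monotonicity in \(\kappa\) down to pure \(U(1)\) theory, then \(\langle W_{j\gamma}\rangle\ge\langle W_\gamma\rangle^j\). Parts (a) and (b) take different routes.

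For (a), the paper reads the vanishing off its current expansion. An admissible current in \(\mathcal{C}_{j\gamma,k}\) would give integer chains \(\omega,\sigma\) with \(\partial\omega+k\sigma+j\gamma=0\), hence \(k\,\partial\sigma=-j\,\partial\gamma\). This is impossible when \(k\nmid j\), because \(\partial\gamma\) has an entry \(\pm1\), so the numerator is an empty sum. Your \(\mathbb{Z}_k\) gauge shift at an endpoint is the Fourier-dual form of the same obstruction, and it needs no expansion at all. Two small remarks:
\begin{itemize}
\item With the paper's definition of a path, \(\partial\gamma\) is \(\{-1,0,1\}\)-valued, so \(a=\pm1\) is automatic. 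The odd-degree fallback is unnecessary, and it would not suffice for general chains.
\item Ginibre alone only makes the denominator of \(r_{n,k,j}\) nonnegative. Strict positivity follows from the GKS product bound over the plaquettes of a spanning surface.
\end{itemize}

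For (b), the paper bounds \(\mathbb{E}_{N,\beta,\kappa,k}[W_{j\gamma}]\) by the total weight of minimal currents in \(\mathcal{C}_{j\gamma,k}\). It notes that these currents are connected with at least \(\area(\gamma)\) plaquettes, and then enumerates. Because its edge weights are the Taylor weights \(\kappa^n/n!\), that enumeration naturally gives a \(\kappa\)-dependent threshold for \(\beta\); compare the factor \(e^{4\kappa}\) in Proposition~\ref{proposition: confinement}. Your routes buy \(\kappa\)-uniformity: \(\beta_{conf}\), \(C\), \(c\) independent of \(\kappa\) and \(N\).

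The obstacle you flag in your first route is not actually an obstacle. All Bessel coefficients are nonnegative and \(I_q(2\kappa)\le I_0(2\kappa)\). The argument then runs as follows.
\begin{itemize}
\item Split \(\omega\) into \(\omega_1\), the union of its edge-connected components touching \(\gamma\), and the rest.
\item The weights factorize, and the rest is a term of the vacuum sum.
\item Every edge of \(\gamma\) must be adjacent to \(\omega_1\), since \(k\nmid j\).
\item Bounding the edge factors by \(1\) gives \(\mathbb{E}_{N,\beta,\kappa,k}[W_{j\gamma}]\le\sum_{\omega_1}\prod_{p\in\support\omega_1}I_{\omega_1(p)}(2\beta)/I_0(2\beta)\), which Peierls-sums to the area law.
\end{itemize}

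Your conditioning on \(\sigma\) modulo \(2\pi/k\) also works, with one caveat. The conditional law of \(\xi\) is a \(\mathbb{Z}_k\) theory in the background flux \(d\tau\), with plaquette weights \(e^{2\beta\cos(d\tau(p)+d\xi(p))}\) whose Fourier coefficients are complex. So you need the cluster-expansion form of the high-temperature area law, with activities \(O(\beta)\) uniformly in \(\tau\), rather than a positivity-based one.
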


The following theorem, Theorem~\ref{theorem: main result charge div}, describes the behavior of charged, Wilson~line observables and the charged Marcu-Fredenhaen ratio in the case \( k \mid j, \) which is the case that is the most relevant for the study of phase transitions of the charged model. In particular, it shows that charged observables can be used to detect phase transitions.

\begin{theorem}\label{theorem: main result charge div}
	Let \(m \geq 4, \) let \( j ,k \geq 0\) be such that \( k \mid j, \) and let \( \beta,\kappa>0.\) Further, let  \( R,T \geq 1, \) and for \( n \geq 1, \) let  \( \gamma_n \) be a rectangular loop with side lengths \( Rn \) and \( Tn \), and let \( \beta_c \) be such that pure lattice gauge theory with \( \beta> \beta_c \) has perimeter law.  Then the following holds. 
	\begin{enumerate}[label=(\alph*)]
		\item There is \( c,C > 0 \) such that  for any path \( \gamma, \) we have \(
		\langle W_{j\gamma} \rangle_{\beta,\kappa,k } > C e^{-c|\gamma|}. \) In particular, if \( \gamma \) is a loop, then the Wilson~loop observable \( W_{j \gamma} \) has a perimeter law for all \( \beta,\kappa >0. \) \label{theorem: always perimeter law}   
		
	\item There is \( \beta_{conf}>0 \) such that if \( \beta \in (0,\beta_{conf}) \), then \( \liminf_{n \to \infty} r_{n,k,j} > 0. \)  \label{theorem: main result div confinement}

	\item For every \( \beta> 0 \) there is \( \kappa_{Higgs} = \kappa_{Higgs}(\beta)>0 \) such that if \( \kappa >\kappa_{Higgs}(\beta) \), then \( \liminf_{n \to \infty} r_{n,k,j} > 0. \)  \label{theorem: main result div higgs}

	\item \label{theorem: main result div free}
	For every \( \beta>\beta_c \) there is \( \kappa_{free} (\beta)>0 \) such that if \( \kappa<\kappa_0(\beta), \) then \( \lim_{n \to \infty} r_{n,k} = 0. \)
	
	\end{enumerate}
\end{theorem}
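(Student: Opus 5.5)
The plan is to reduce the charge-\(k\) model with \(k \mid j\) to the ordinary abelian lattice Higgs model, and then treat each part separately. Write \(j = k\ell\), and consider the pushforward of \(\mu_{N,\beta,\kappa,k}\) under \(\sigma \mapsto k\sigma\). The observable \(W_{j\gamma}(\sigma) = \rho(k\sigma(\gamma))^{\ell}\) is a function of \(k\sigma\). The Higgs term \(\kappa\sum_e \rho(k\sigma(e))\) is also a function of \(k\sigma\). The plaquette term is not, so the reduced model is a Higgs model whose plaquette weight is an averaged, still positive-definite, function of \(k\,d\sigma(p)\). All positivity tools remain available, namely Ginibre/GHS and current expansions, because each weight expands as a character series with nonnegative coefficients.

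\emph{Part (a).} I would apply Ginibre twice, as in the charge-one computation in the introduction. This gives \(\langle W_{j\gamma}\rangle_{\beta,\kappa,k} \geq \prod_{e\in\gamma}\langle \rho(\sigma(e))^{j}\rangle_{\beta,\kappa,k} \geq \prod_{e\in\gamma}\langle \rho(\sigma(e))^{j}\rangle_{0,\kappa,k}\). The single-edge expectation at \(\beta=0\) is \(\int e^{i\ell k\theta}e^{2\kappa\cos k\theta}\,d\theta / \int e^{2\kappa\cos k\theta}\,d\theta = I_\ell(2\kappa)/I_0(2\kappa) > 0\). This yields the perimeter law with \(c = -\log\bigl(I_\ell(2\kappa)/I_0(2\kappa)\bigr)\).

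\emph{Parts (b) and (c).} Here I would lower-bound \(r_{n,k,j}\) by upper-bounding the denominator \(\langle W_{j(\gamma_n+\gamma_n')}\rangle\) in terms of the product of the numerators.

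For (b), \(\beta\) is small. I would run a high-temperature cluster (polymer) expansion in both \(\beta\) and the Higgs weights. I would write \(e^{\beta\cos} = 1 + (e^{\beta\cos}-1)\) for the plaquettes and expand the edge weights in characters. The line expectation \(\langle W_{j\gamma_n}\rangle\) is then dominated by configurations that screen the charges: the charge \(j\) along \(\gamma_n\) is absorbed by edge currents of charge multiple \(k\), together with small plaquette clusters. Consequently \(\log\langle W_{j\gamma_n}\rangle\), \(\log\langle W_{j\gamma_n'}\rangle\) and \(\log\langle W_{j(\gamma_n+\gamma_n')}\rangle\) all admit convergent expansions as sums of local terms along the paths. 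Their difference is a sum of corner and endpoint contributions, which is bounded uniformly in \(n\).

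For (c), \(\kappa\) is large. I would change variables to the unitary gauge and expand around \(k\sigma(e)\approx 0\), which is a low-temperature expansion in \(e^{-c\kappa}\) for the edge variables. The same cancellation argument applies, with clusters of "excited" edges in place of plaquette clusters. In both cases, the linear terms proportional to \(|\gamma_n|+|\gamma_n'|\) cancel exactly between numerator and denominator, because \(\gamma_n+\gamma_n'\) traverses the same edges.

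\emph{Part (d).} For \(\beta>\beta_c\) and small \(\kappa\), I would use the current/disagreement-percolation approach mentioned in the introduction. In the current expansion of the Higgs term, an edge carries nonzero current with probability \(O(\kappa)\). Then \(\langle W_{j\gamma_n}\rangle\) requires a connected current cluster joining \(x_n\) to \(y_n\) in order to neutralize the endpoint charges. The probability of such a cluster is at most \(e^{-c'n\log(1/\kappa)}\) times perimeter-law factors, and this vanishes faster than \(\langle W_{j(\gamma_n+\gamma_n')}\rangle\) decays. For the latter I would use the perimeter law of pure gauge theory at \(\beta>\beta_c\), transferred to \(\kappa>0\) by Ginibre monotonicity in \(\kappa\), giving \(\langle W_{j(\gamma_n+\gamma_n')}\rangle \geq Ce^{-c|\gamma_n+\gamma_n'|}\). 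Since \(\langle W_{j\gamma_n}\rangle\) and \(\langle W_{j\gamma_n'}\rangle\) each carry an extra factor \(e^{-cn\log(1/\kappa)}\) beyond their perimeter factor, the ratio tends to \(0\) once \(\kappa<\kappa_{free}(\beta)\).

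The main obstacle is (d): comparing the two line expectations against the loop expectation with matching perimeter constants. I would try to write the line expectations as the loop expectation times the probability of a long connecting current cluster, using a switching lemma for the currents. Then the loop's perimeter factor cancels exactly, and only the percolation estimate remains.
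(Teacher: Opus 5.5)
\textbf{There is a genuine gap in part (c) when $k\geq 2$.} For $k=1$ your plan works once the polymers are taken to be plaquette sets with the activities of Lemma~\ref{lemma: upper Holder bound 1}. The gap is already present in your opening reduction.

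Large $\kappa$ only forces $k\sigma(e)\approx 0$, i.e.\ $\sigma(e)$ close to \emph{some} $k$-th root of unity. Write $\sigma=\theta+\theta'$ with $\theta$ valued in $U(1)_k$ and $\theta'\in\Omega_1(B_N,\mathbb{Z}_k)$, as in Section~\ref{section: charge k expansion}. The Higgs term makes $\theta$ small, but $\theta'$ is left as a $\mathbb{Z}_k$ lattice gauge field with plaquette weights $e^{2\beta\cos(d\theta'(p))}$ that do not depend on $\kappa$.

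Unitary gauge does not remove this field. The measure is still invariant under $\sigma\mapsto\sigma+d\eta$ for $\eta\in\Omega_0(B_N,\mathbb{Z}_k)$, so $\sigma(e)$ is near $0$ only with probability $1/k$. At fixed $\beta$ this sector is an interacting $\mathbb{Z}_k$ gauge theory in dimension $m\geq 4$, with its own deconfinement transition. An expansion in $e^{-c\kappa}$ whose polymers are ``excited edges'' leaves this sector untouched, so the cancellation you invoke has nothing to rest on.

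For the same reason, the pushforward under $\sigma\mapsto k\sigma$ does not give a local plaquette weight. Summing over the $k$ lifts of each edge couples all plaquettes containing that edge, and what is integrated out is exactly this $\mathbb{Z}_k$ theory in the background $k\sigma$.

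The missing idea is to use that $W_{j\gamma}$ with $k\mid j$ depends only on $\theta$, and to treat $\theta'$ separately. This is the purpose of the polymers $(P,\theta')$ in Proposition~\ref{proposition: expansion 2}. Be aware, though, that polymers there with $P=\emptyset$ carry the weight $\prod_p e^{2\beta(\cos d\theta'(p)-1)}$. This is small for large $\beta$ but does not decay as $\kappa\to\infty$. For small $\beta$, an expansion like your (b) works uniformly in large $\kappa$: its activities are at most $(I_0(2\kappa)/I_\ell(2\kappa))^4(1-e^{-4\beta})$ per plaquette, with $\ell=j/k$. For intermediate $\beta$, some further control of the $\mathbb{Z}_k$ sector is needed on either route.

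\textbf{The other parts are essentially sound.}

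\emph{Part (a).} Your first inequality is not a consequence of Ginibre's inequality, which controls $\langle\cos A\cos B\rangle$ rather than $\langle\cos(A+B)\rangle$; a single rotator in a field has $\langle\cos 2\theta\rangle<\langle\cos\theta\rangle^2$. You do not need it. Monotonicity in $\beta$ applied to the single character $W_{j\gamma}$, plus independence of the edges at $\beta=0$, gives $\langle W_{j\gamma}\rangle_{\beta,\kappa,k}\geq (I_\ell(2\kappa)/I_0(2\kappa))^{|\gamma|}$ directly.

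\emph{Part (b).} This is a different route from Proposition~\ref{proposition: confinement}, which uses the current expansion, a weight-preserving exchange of the current clusters touching $\gamma_n$, and a percolation bound. Yours is the cluster-expansion alternative the paper mentions. Its threshold in $\beta$ degenerates as $\kappa\to 0$ but not as $\kappa\to\infty$. This is the reverse of the paper's condition that $(e^{j\beta/k}-1)e^{4\kappa}$ be small.

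\emph{Part (d).} The obstacle you name is not there. The denominator is at least $Ce^{-c_\beta|\gamma_n+\gamma_n'|}$ with $c_\beta$ independent of $\kappa$. So it suffices that each line expectation decays at a rate tending to infinity as $\kappa\to 0$; no matching of perimeter constants and no switching lemma is needed. Proposition~\ref{proposition: free} obtains this decay from Ginibre monotonicity in $\beta$: at $\beta=\infty$ one has $\sigma=d\eta$, and $W_{j\gamma_n}$ becomes a high-temperature XY correlation between $x_n$ and $y_n$. Your current picture can also be made rigorous directly. Remove $j/k$ units of edge current along paths from $x_n$ to $y_n$. This map is injective, gains a factor $\kappa$ per edge traversed, and lands in a sector with a closed source, whose partition function is at most $Z[0]$.
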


We note that Theorem~\ref{theorem: main result charge div} in fact implies Theorem~\ref{theorem: U1 Wilson line ratios MF}, as Theorem~\ref{theorem: U1 Wilson line ratios MF} corresponds to the special case \( j=k=1 \) of Theorem~\ref{theorem: main result charge div}.

\begin{remark}
	In the literature, there are several closely related models which are all referred to as the (compact) abelian lattice Higgs model with charge \( k \). The most general of these models is given by the Hamiltonian
	\begin{align*}
		&-\beta \sum_{p\in C_2(B_N)} \rho(d\sigma(p)) - \kappa \sum_{e = (y-x) \in C_1(B_N)} \eta_x \rho(\sigma(e))^k  \eta_y^* +\sum \eta_x\eta_x^* + \lambda \sum (\eta_x\eta_x^*-1)^2,
	\end{align*}
	where \( \beta,\kappa,\lambda>0 \), \( \sigma \in \Omega_1(B_N,U(1))\), and \(\eta \colon C_0(B_N) \to \mathbb{C}\) is such that \( \eta_x = \eta_{-x}^*. \)
	Equivalently, one can let \( \sigma \in \Omega_1(B_N,U(1))\), \(\phi \in  \Omega_0(B_N,U(1))\), and \( r \colon C_0(B_N) \to \mathbb{R}_+ \) be such that \( r_x = r_{-x} \) and consider the Hamiltonian
	\begin{align*}
		&-\beta \sum_{p\in C_2(B_N)} \rho(d\sigma(p)) - \kappa \sum_{e = (y-x) \in C_1(B_N)} r_xr_y \rho(k\sigma(e)-d\phi(e)) + \sum r_x^2 + \lambda \sum (r_x^2-1)^2.
	\end{align*}
	 Letting \( \lambda = \infty, \) one obtains the model described by the Hamiltonian
	\begin{align*}
		-\beta \sum_{p\in C_2(B_N)} \rho(d\sigma(p)) - \kappa \sum_{e \in C_1(B_N)} \rho(k \sigma(e)-d\phi(e)).
	\end{align*}
	This model is sometimes referred to as the London limit of the Abelian lattice Higgs model with charge~\( k \). Using the change of variables \( \sigma \mapsto \sigma+d\phi \), often  referred to as choosing unitary gauge, one obtains the equivalent model
	\begin{align*}
	H_{N,\beta,\kappa,\lambda}(\sigma) &\coloneqq -\beta \sum_{p\in C_2(B_N)} \rho(d\sigma(p)) - \kappa \sum_{e  \in C_1(B_N)} \rho(\sigma(e))^k 
	\end{align*}
	used in this paper.
\end{remark}

\begin{figure}[ht]
\centering
\begin{tikzpicture}[scale=1.3]
	\begin{axis}[xmin=-0.05,xmax=0.8,ymin=-0.05,ymax=0.8,xticklabel=\empty,yticklabel=\empty,axis lines = middle,xlabel=$\beta$,ylabel=$\kappa$,label style =
               {at={(ticklabel cs:1.0)}}, color=detailcolor07, ticks=none]
               
               
               
               

               \addplot[name path=A, domain=0.44:0.9, samples=100, draw=blue, draw opacity=.4]{0.2*(x-0.44)^(0.5)};

               \addplot[name path=B, domain=0.44:0.9, samples=100, draw=none]{0};

    		 \addplot [blue, opacity=.1] fill between [of=A and B];

               \addplot[name path=C, domain=0.24:0.9, samples=100, draw=blue, draw opacity=.4]{0.77 * (x-0.24)^0.17};

               \addplot[name path=D, domain=0.24:0.9, samples=100, draw=none]{.77};

    		 \addplot [blue, opacity=.1] fill between [of=C and D];

  		\end{axis}
  		

  		\draw (5.4,0.50) node[] {\tiny \(\lim_{n \to \infty}r_{n,2,2} = 0\)};
  		
  		\fill[blue, opacity=.1] (0.41,0.34) -- (2.34,0.34) -- (2.34,5.5) -- (0.41,5.5) -- cycle;
  		
  		\draw (1.7,2.4) node[] {\tiny \( \langle W_{\gamma} \rangle_{\beta,\kappa,2} \lesssim Ce^{-c \area(\gamma)}\)}; 
  		
  		\draw (3.3,4.7) node[] {\tiny \( \liminf_{n \to \infty}r_{n,2,2} >0 \)};
  		
  		\fill[orange, opacity=.1] (0.41,0.34) -- (2.8,0.34) -- (2.8,5.5) -- (0.41,5.5) -- cycle;
  		\draw[orange, opacity=.4] (2.8,0.34) -- (2.8,5.5);

  		\fill[red, opacity=.1] (4,0.34) -- (6.86,0.34) -- (6.86,5.5) -- (4,5.5) -- cycle;
  		\draw[red, opacity=.4] (4,0.34) --  (4,5.5);
  		
  		\draw (5.4,2.4) node[] {\tiny \( \langle W_{\gamma} \rangle_{\beta,\kappa,2} \sim Ce^{-c |\gamma|}\)};
  		 
\end{tikzpicture} 
	\caption{A summary of the reuslts of Theorem~\ref{theorem: main result charge nondiv} and Theorem~\ref{theorem: main result charge div} in the special case~\( k = 2 .\)}\label{}
\end{figure}
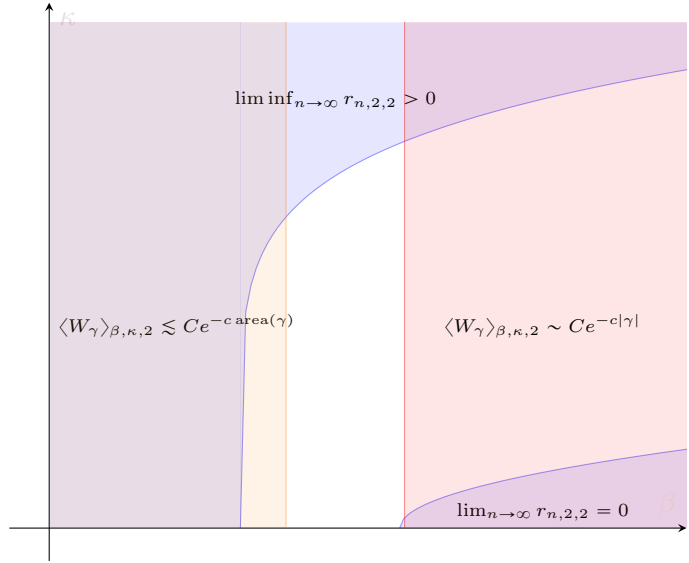
 
\subsection{Related works}

In~\citep{f2024b}, we showed that the Marcu--Fredenhagen ratio has a phase transition in the Ising lattice Higgs model, which is the model with the same Hamiltonian as in~\eqref{eq: hamiltonian compact abelian lattice higgs} but with \( \sigma \in \Omega_1(B_N,\mathbb{Z}_2) \) instead of \( \sigma \in \Omega_1(B_N,U(1)). \) In that paper, the proofs used cluster expansions of \( \log \mathbb{E}_{N,\beta}[\gamma] \) and high temperature expansions to obtain the desired results. However, for the model considered in this paper, one cannot directly apply such cluster expansions, as this requires the structure group to be discrete. In addition, the abelian lattice Higgs model does not have a high temperature expansion analog to that corresponding to the case \( G=\mathbb{Z}_2. \)

One way to think about the abelian lattice Higgs model with charge \( k \) is to think of the Higgs field component as making \( \rho(\sigma) \) prefer to be close to the set \( \{ e^{ j 2\pi i /k}\}_{j = 0,1,\dots, k-1} \) instead of preferring to be close to \( 1 = e^0\) as in the \( k = 1 \) case. The same effect can be obtained analogously for a finite gauge group. However, in this case, not all choices of \( k \) yield distinct models, as, e.g., \( G = \mathbb{Z}_n \) and \( k = m+jn \) yields the same model for all \( j \geq 0. \)
In~\citep{a2021}, a related model was considered, which was described by the Hamiltonian
\begin{equation}\label{eq: arkas model}
	-\beta \sum_{p \in C_2(B_N)} \tr \rho(d\sigma(p)) - \kappa \sum_{e \in C_1(B_N)} \tr \rho(\sigma(e)) \rho(d\phi(e))^{-1}, \quad \sigma \in \Omega_1(B_N,G) ,\, \phi \in \Omega_0(B_N,H).
\end{equation}
Here \( G \) and \( H \) were finite groups that did not have to be equal, as long as their representations had the same dimension. In the discrete case, letting \( G = \mathbb{Z}_{kn} \) and \( H=\mathbb{Z}_k \) results in a model similar to the model considered in this paper. However, they are not equivalent, as letting \( \kappa \to \infty \) in the abelian lattice Higgs model with charge \( 2 \) results in the \( \mathbb{Z}_2 \) lattice Higgs model, while letting \( \kappa \to \infty \) in the model described by~\eqref{eq: arkas model}, for \( G = \mathbb{Z}_{2n} \) and \( H = n \) results in the pure Ising model.
The main result of~\citep{a2021} for this model was asymptotics for Wilson~loop observables in the dilute gas limit, meaning that both \( \beta \) and either \( \kappa \) or \( \kappa^{-1} \) had go to infinity as functions of \( |\gamma| \). In particular, the results obtained in~\citep{a2021} have no implications for the type of questions considered in this paper, even in an analogous discrete setting.

\subsection{Contributions of this paper}

A main tool in several of the proofs in this paper is a current expansion for the charged model. Together with a coupling argument based on ideas from disagreement percolation, this provides a new approach to the Marcu--Fredenhagen ratio that does not involve cluster expansions. 
We also generalize a polymer expansion that first appeared in~\citep{os1978} to the multiple-charge setting \( k \geq 2\). Apart from the generalization, we also provide a fully rigorous treatment of this expansion, also in the case \( j=k=1 \).
All new ideas and tools can, with minor work, also be applied to the discrete setting, for, e.g., \( G= \mathbb{Z}_2,\) to obtain similar results for the analogs of the models considered here.

\subsection{Structure of paper}
 
 In Section~\ref{sec: preliminaries}, we introduce notation and describe the tools from discrete exterior calculus that we will use throughout the paper. 
 Next, in Section~\ref{sec: current expansion}, we describe a current expansion for a more general spin model, and then show that we obtain current expansions for \( U(1)\)-lattice gauge theory, the abelian lattice Higgs model, and the abelian lattice Higgs model with charge \( \geq 1 \) as special cases.
 In Section~\ref{section: new version of expansion}, we present a generalization of the polymer expansion of the partition function used in~\citep{os1978} for the case \( j = k=1\) to the case \( j,k \geq 1. \)
 Finally, in Section~\ref{section: proof of main result}, we give a proof of our main results, Theorem~\ref{theorem: main result charge nondiv} and Theorem~\ref{theorem: main result charge div}, from which Theorem~\ref{theorem: U1 Wilson line ratios MF} follows as an immediate corollary.

\subsection{Acknowledgements}
The author is grateful to Christophe Garban and Avelio Supelveda for insightful discussions. The author acknowledges support from the Swedish Research Council, grant number 2024-04744.

\section{Preliminaries}\label{sec: preliminaries}
\subsection{Notation and standing assumptions}

 To simplify the notation in the rest of this paper, we let
\begin{equation*}
	Z_{N,\beta,\kappa,k}[\gamma] \coloneqq \int  \rho(\sigma(\gamma)) e^{ \beta \sum_{p \in C_2(B_N)} \rho(d\sigma(p))+\kappa\sum_{e \in C_1(B_N)} \rho(\sigma(e))^k} \, d\mu(\sigma).
\end{equation*}

Throughout the paper, we will use notation from discrete exterior calculus. The list below summarizes this notation; for more careful definitions, we refer the reader to~\citep{flv2022}.

\begin{itemize}
	\item We let \( B_N \coloneqq [-N,N]^m \cap \mathbb{Z}^m.\)
	\item For \( k = 0,1,\dots, m, \) the set of oriented \( k \)-cells in \( B_N \) is denoted by \( C_k(B_N). \)

	\item For \( k = 0,1,\dots, m, \) a \(k \)-chain is formal sum of positively oriented \( k \)-cells with integer coefficients. The set of all \(k\)-chains on \( B_N \) is denoted by \( C^k(B_N,\mathbb{Z}) \)  
	
	\item For \( k=1,2,\dots,m  \) and \( c \in C_k(B_N), \) we let \( \partial c \in C^k(B_N,\mathbb{Z})\) denote the oriented boundary of \( c. \) For \( k = 0,1,\dots,m-1 \) and \( c \in C_k(B_N), \) we let \( \hat \partial c = \sum_{c' \in \partial c} c' \in C^{k+1}(B_N,\mathbb{Z}) \) denote the oriented coboundary of \( c. \) This notation extends to chains by linearity.
	
	\item For \( k = 1,2,\dots,m \) and \( C \subseteq C_k(B_N), \) we let \( \mathcal{G}(C) \) be the graph with vertex set \( C \) and an edge between \( c,c' \in C \) if and only if \( \support \partial c \cap \support  \partial c' \neq \emptyset.\) We say that \( C \) is connected if \( \mathcal{G}(C) \) is connected.

	\item A \( \{ -1,0,1\}\)-valued 1-chain \( \gamma \) with connected support such that \( \partial \gamma \in \{ -1,0,1 \}\) for all \( v \in C_0(B_N)^+ \)  and \(|\support \partial \gamma|\leq 2 \) is referred to as a path. A path \( \gamma \) with \( \partial \gamma = 0 \) is referred to as a loop. 
	
	\item For \( k = 0,1,\dots, m \), a \( G \)-valued function \( \omega \colon C_k(B_N) \to G \) with the property that \( {\omega(c) = -\omega(-c)} \) for all \( c \in C_k(B_N) \) is referred to as a \( k\)-form. The set of all \(k\)-forms is denoted by \( \Omega_k(B_N,G). \) When \( c \in C^k(B_N,\mathbb{Z}), \) we define \( \omega(c) = \sum_{c' \in C_k(B_N)^+} c[c']  \omega(c').\)
	
	\item For \( k=0,1,\dots, m-1, \) and \( \omega \in \Omega_k(B_N,G), \) we define \( d\omega \in \Omega_{k+1}(B_N,G)\) by
	\begin{equation*}
		d\omega(c) \coloneqq \omega(\partial c),\qquad c \in C_{k+1}(B_N) .
	\end{equation*}
	
	\end{itemize}

\section{Polymer expansions}

In this section, we present two different polymer expansions of the compact abelian lattice Higgs model with and without charge. 

\subsection{Current expansions}\label{sec: current expansion}

The main goal of this section is to construct a current expansion of the abelian lattice Higgs model with an integer charge, with \( U(1) \) lattice gauge theory and the abelian lattice Higgs model as special cases. To this end, we first construct a current expansion of a more general model. We then specialize this expansion to the three models we are interested in.
 %
%
To define the general spin model, let \( \ell \geq 0, \) and let \( \Gamma \subseteq C^\ell(B_N,\mathbb{Z})\) be symmetric, i.e., be such that if \( \xi \in \Gamma, \) then \( - \xi \in \Gamma. \) For \( \xi \in \Gamma, \) let \( \beta_\xi =-\beta_\xi >0, \) and consider the model on \( \Omega_\ell(B_N,U(1)) \) with Hamiltonian
\begin{equation}\label{eq: general model}
	H_{N,(\beta_\xi)}(\sigma) \coloneqq -\sum_{\xi \in \Gamma} \beta_\xi \rho(\sigma(\xi)), \quad \sigma \in \Omega_\ell(B_N,U(1)),
\end{equation}
using a uniform reference measure \( \mu. \)
This model describes a ferromagnetic spin system with block spins, which is invariant under spin-flips, and hence the Griffiths-Hurst-Sherman inequality holds. Moreover, the family of models described by~\eqref{eq: general model} has the following models as special cases.
\begin{enumerate}
	\item Letting \( \ell =1 \),  \( {\Gamma =   \{1\cdot e\colon e \in C_1(B_N)\},} \)  and \( \beta_\xi \equiv \kappa,  \) we obtain the XY model.
	\item Letting \( \ell =1 \), \( \Gamma = \bigl\{ \partial p \colon p\in C_2(B_N) \bigr\},  \) and \( \beta_\xi \equiv \beta,  \) we obtain \( U(1) \) lattice gauge theory.
	\item Letting \( \ell =1 \), \( {\Gamma = \bigl\{ \partial p \colon p\in C_2(B_N) \bigr\} \cup \{1\cdot e\colon e \in C_1(B_N)\},} \) and for \( \xi \in \Gamma, \) letting
	\[ \beta_\xi = \begin{cases}
		\beta &\text{if } \xi = \partial p \text{ for some }  p\in C_2(B_N) \cr \kappa &\text{else,}
	\end{cases}\] 
	we obtain the compact abelian lattice Higgs model (with charge \( k =1 \)).
	\item Letting \( \ell =1 \), \( {\Gamma = \bigl\{ \partial p \colon p\in C_2(B_N) \bigr\} \cup \{k\cdot e\colon e \in C_1(B_N)\},} \) and for \( \xi \in \Gamma, \) letting
	\[ \beta_\xi = \begin{cases}
		\beta &\text{if } \xi = \partial p \text{ for some }  p\in C_2(B_N) \cr \kappa &\text{else,}
	\end{cases}\] 
	we obtain the compact abelian lattice Higgs model with charge \( k. \)
\end{enumerate}

The following proposition gives a current expansion for the model described by~\eqref{eq: general model}.

\begin{proposition}\label{proposition: general currents}
	Let \( \ell \in \{ 0,1,\dots, m\}\), let \( \Gamma \subseteq C^\ell(B_N,\mathbb{Z})\) be symmetric, and let
	\begin{equation*}
		\mathcal{C} \coloneqq \bigl\{ \mathbf{n} \colon \Gamma\to \mathbb{N} \bigr\}.
	\end{equation*}
	For \( \gamma \in C^\ell(B_N,\mathbb{Z}),\) let 
	\begin{equation*}
		\mathcal{C}_{\gamma} \coloneqq \bigl\{ \mathbf{n} \in \mathcal{C} \colon (\mathbf{n}[\hat \partial c]-\mathbf{n}[-\hat \partial c]) + \gamma[c]= 0,  \; \forall c\in C_\ell(B_N) \bigr\},
	\end{equation*}
	where, for \( c \in C_\ell(B_N), \) we let 
	\[
		\mathbf{n}[\hat \partial c] \coloneqq \sum_{\xi \in \Gamma } \xi[c] \cdot \mathbf{n}[\xi].
	\]
	For each \( \xi \in \Gamma, \) let \( \beta_\xi=\beta_{-\xi} >0, \) and for \( \mathbf{n} \in \mathcal{C}_{\gamma}, \) let
	\begin{equation*}
		w(\mathbf{n}) \coloneqq \prod_{\xi \in \Gamma } \frac{ \beta_\xi^{\mathbf{n}[\xi]}}{\mathbf{n}[\xi]!} .
	\end{equation*}
	Finally, let \( \mu \) be the uniform distribution on \( \Omega_\ell(B_N,U(1)). \) Then
    \begin{equation*}
        \begin{split}
        & \int \rho(\sigma(\gamma)) e^{  \sum_{\xi \in \Gamma} \beta_\xi \rho(\sigma(\xi))} \, d\mu(\sigma)
        =
        \sum_{n \in \mathcal{C}_{\gamma}} w(n) .
        \end{split}
    \end{equation*}
\end{proposition}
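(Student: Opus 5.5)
Taylor-expand the Boltzmann weight, integrate term by term, and use the orthogonality of characters on \( U(1) \). Since \( \Gamma \) is finite (it lies in \( C^\ell(B_N,\mathbb{Z}) \) for the finite box \( B_N \)), we write
\begin{equation*}
	e^{\sum_{\xi \in \Gamma} \beta_\xi \rho(\sigma(\xi))} = \prod_{\xi \in \Gamma} e^{\beta_\xi \rho(\sigma(\xi))} = \prod_{\xi \in \Gamma} \sum_{\mathbf{n}[\xi] \geq 0} \frac{\beta_\xi^{\mathbf{n}[\xi]}}{\mathbf{n}[\xi]!} \rho(\sigma(\xi))^{\mathbf{n}[\xi]} = \sum_{\mathbf{n} \in \mathcal{C}} w(\mathbf{n}) \prod_{\xi \in \Gamma} \rho\bigl(\mathbf{n}[\xi]\,\sigma(\xi)\bigr).
\end{equation*}
The multiple series converges absolutely and uniformly in \( \sigma \), since \( |\rho| = 1 \) and the sum of absolute values is \( \prod_\xi e^{\beta_\xi} < \infty \). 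Hence we may multiply by \( \rho(\sigma(\gamma)) \) and exchange sum and integral by dominated convergence (or Fubini).

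Next we rewrite the integrand of each term as a product of characters over the individual cells. Using linearity of \( \sigma \) on chains, \( \sigma(\xi) = \sum_{c \in C_\ell(B_N)^+} \xi[c]\sigma(c) \), and \( \rho \) is a homomorphism, so
\begin{equation*}
	\rho(\sigma(\gamma)) \prod_{\xi} \rho\bigl(\mathbf{n}[\xi]\sigma(\xi)\bigr) = \prod_{c \in C_\ell(B_N)^+} \rho(\sigma(c))^{\gamma[c] + \sum_{\xi} \xi[c]\mathbf{n}[\xi]} = \prod_{c \in C_\ell(B_N)^+} \rho(\sigma(c))^{\gamma[c] + \mathbf{n}[\hat\partial c]}.
\end{equation*}
Under \( \mu \), the values \( \sigma(c) \), \( c \in C_\ell(B_N)^+ \), are i.i.d.\ uniform on \( U(1) \), so the integral factorizes. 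Each factor \( \int \rho(\theta)^{a}\,d\theta \) equals \( 1 \) if \( a = 0 \) and \( 0 \) otherwise. Thus the term indexed by \( \mathbf{n} \) contributes \( w(\mathbf{n}) \) exactly when \( \gamma[c] + \mathbf{n}[\hat\partial c] = 0 \) for all positively oriented \( c \), and contributes \( 0 \) otherwise.

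The main step requiring care is matching this vanishing condition with the definition of \( \mathcal{C}_\gamma \), which is stated as \( (\mathbf{n}[\hat\partial c] - \mathbf{n}[-\hat\partial c]) + \gamma[c] = 0 \) for all \( c \in C_\ell(B_N) \). Since \( \Gamma \) is symmetric, the notation \( \mathbf{n}[-\hat\partial c] \) presumably means \( \sum_{\xi} \xi[-c]\,\mathbf{n}[\xi] \) (or the contribution from \( -\xi \)). The natural reading is that \( \mathbf{n}[\hat\partial c] - \mathbf{n}[-\hat\partial c] \), with the conventions \( \xi[-c] = -\xi[c] \) and \( \gamma[-c] = -\gamma[c] \), is exactly the total exponent of \( \rho(\sigma(c)) \) collected above. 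Here the terms \( \xi \) and \( -\xi \) both appear in \( \Gamma \) and contribute with opposite signs. I would fix this convention explicitly, check that the condition for \( c \) and for \( -c \) coincide (so restricting to \( C_\ell(B_N)^+ \) loses nothing), and conclude that the surviving terms are precisely \( \mathbf{n} \in \mathcal{C}_\gamma \). Summing over them gives the stated identity.
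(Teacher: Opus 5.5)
Your proposal is correct and follows essentially the paper's route: expand each factor $e^{\beta_\xi\rho(\sigma(\xi))}$ as a power series, sum over $\mathbf{n}\in\mathcal{C}$, and use $\int\rho(\sigma(c))^{a}\,d\mu=\mathbf{1}(a=0)$ cell by cell. The differences are minor: the paper does the last step by conditioning on $\sigma$ off $\pm c_0$, you do it by factorizing the product measure, and you also justify the interchange of sum and integral, which the paper omits.

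One correction to your last paragraph. Since $\mathbf{n}[\hat\partial c]=\sum_{\xi\in\Gamma}\xi[c]\,\mathbf{n}[\xi]$ already collects both $\xi$ and $-\xi$, reading $\mathbf{n}[-\hat\partial c]$ as $\sum_{\xi}\xi[-c]\,\mathbf{n}[\xi]=-\mathbf{n}[\hat\partial c]$ would double the exponent. That reading makes the identity false, e.g.\ for $\Gamma=\{c_1,-c_1\}$ and $\gamma=c_1$. So the convention you must fix is your alternative one: $\mathbf{n}[\hat\partial c]$ and $\mathbf{n}[-\hat\partial c]$ collect the contributions (weighted by $|\xi[c]|$) of those $\xi$ containing $c$ with positive and with negative orientation respectively. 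Their difference is then exactly $\sum_{\xi}\xi[c]\,\mathbf{n}[\xi]$.
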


\begin{proof}
    First note that, for any \( \sigma \in \Omega_\ell(B_N,U(1)), \) we have
    \begin{equation*}
        \begin{split}
            & e^{ \sum_{\xi \in \Gamma} \beta_\xi \rho(\sigma(\xi))}
            = \prod_{\xi \in \Gamma} e^{ \beta_\xi   \rho(\sigma(\xi))}
            = \prod_{\xi \in \Gamma} \sum_{\mathbf{n}[\xi] \geq 0} \frac{\bigl( \beta_\xi   \rho(\sigma(\xi)) \bigr)^{\mathbf{n}[\xi]}}{\mathbf{n}[\xi]!}.
        \end{split}
    \end{equation*}
    This implies, in particular, that
    \begin{equation*}
        \begin{split}
            & \int  \rho(\sigma(\gamma)) e^{  \sum_{\xi \in \Gamma} \beta_\xi \rho(\sigma(\xi))} \, d\mu(\sigma)
            = 
            \int  \rho(\sigma(\gamma))\prod_{\xi \in \Gamma} \sum_{\mathbf{n}[\xi] \geq 0} \frac{\bigl( \beta_\xi   \rho(\sigma(\xi)) \bigr)^{\mathbf{n}[\xi]}}{\mathbf{n}[\xi]!} \, d\mu(\sigma)
            \\&\qquad = 
            \int  
            \rho(\sigma(\gamma)) 
            \sum_{\mathbf{n} \in \mathcal{C}}
            \prod_{\xi \in \Gamma} \frac{\bigl( \beta_\xi   \rho(\sigma(\xi)) \bigr)^{\mathbf{n}[\xi]}}{\mathbf{n}[\xi]!} \, d\mu(\sigma)
            .
        \end{split}
    \end{equation*} 
    Now fix \( c_0 \in C_\ell(B_N) \), \( \sigma_0 \in \Omega_\ell(B_N, U(1)), \) and let \( \mu_0 \) be the restriction of \( \mu \) to \( \pm c_0 \) after conditioning \( \sigma \in \Omega_\ell(B_N, U(1))\) to be equal to \( \sigma_0 \) outside of \( \pm c_0. \)
    Then, for any \( \mathbf{n} \in \mathcal{C},\) we have
    \begin{equation*}
        \begin{split}
            &
            \int \rho(\sigma(\gamma))\prod_{\xi \in \Gamma} \frac{\bigl( \beta_\xi   \rho(\sigma(\xi)) \bigr)^{\mathbf{n}[\xi]}}{\mathbf{n}[\xi]!}\, d\mu_0(\sigma )
            \\&\qquad=
            \prod_{c \in \gamma }\rho(\sigma_0(c))\prod_{\xi \in \Gamma } \frac{\bigl( \beta_\xi   \rho(\sigma_0(\xi)) \bigr)^{\mathbf{n}[\xi]}}{\mathbf{n}[\xi]!} 
            \\&\qquad\qquad \cdot\bigl(  \rho(\sigma_0(c_0)) \bigr)^{-  (\mathbf{n}[\hat \partial c_0]-\mathbf{n}[-\hat \partial c_0]) -\gamma[c_0]}
            \int  \bigl(    \rho(\sigma(c_0)) \bigr)^{ (\mathbf{n}[\hat \partial c_0]-\mathbf{n}[-\hat \partial c_0]) + \gamma[c_0]}   \, d\mu_0(\sigma)  .
        \end{split}
    \end{equation*} 
    Since
    \begin{align*}
    	\int  \bigl(    \rho(\sigma(c_0)) \bigr)^{ (\mathbf{n}[\hat \partial c_0]-\mathbf{n}[-\hat \partial c_0]) + \gamma[c_0]}   \, d\mu_0(\sigma) = \mathbf{1}\bigl( (\mathbf{n}[\hat \partial c_0]-\mathbf{n}[-\hat \partial c_0]) + \gamma[c_0]= 0 \bigr),
    \end{align*}
    it follows that,
    \begin{equation*}
        \begin{split}
            & \int  \rho(\sigma(\xi)) e^{ \beta_\xi \sum_{p \in C_2(B_N)} \rho(d\sigma(p))} \, d\mu(\sigma)
            \\&\qquad = 
            \sum_{\mathbf{n}\in \mathcal{C}} \prod_{c \in \gamma }\rho(\sigma(c))\prod_{\xi \in \Gamma } \frac{\bigl( \beta_\xi   \rho(\sigma(\xi)) \bigr)^{\mathbf{n}[\xi]}}{\mathbf{n}[\xi]!} \cdot \mathbf{1}\bigl( \forall c \in C_\ell(B_N)^+ \colon (\mathbf{n}[\hat \partial c]-\mathbf{n}[-\hat \partial c]) + \gamma[c]= 0 \bigr)
            \\&\qquad=
            \sum_{\mathbf{n}\in \mathcal{C}} \prod_{\xi \in \Gamma} 
            \frac{\beta_\xi^{\mathbf{n}[\xi]}}{\mathbf{n}[\xi]!}
            \prod_{c \in C_\ell(B_N)^+} \rho(\sigma(c))^{(\mathbf{n}[\hat \partial c]-\mathbf{n}[-\hat \partial c])+\gamma[c]} \cdot \mathbf{1}\bigl( \forall c \in C_\ell(B_N)^+ \colon (\mathbf{n}[\hat \partial c]-\mathbf{n}[-\hat \partial c]) + \gamma[c]= 0 \bigr)
            \\&\qquad=
            \sum_{\mathbf{n}\in \mathcal{C}} \prod_{\xi \in \Gamma} 
            \frac{\beta_\xi^{\mathbf{n}[\xi]}}{\mathbf{n}[\xi]!}
            \cdot \mathbf{1}\bigl( \forall c \in C_\ell(B_N)^+ \colon (\mathbf{n}[\hat \partial c]-\mathbf{n}[-\hat \partial c]) + \gamma[c]= 0 \bigr)
            =
            \sum_{\mathbf{n} \in \mathcal{C}_\xi} \prod_{\xi \in \Gamma  } \frac{ \beta_\xi^{\mathbf{n}[\xi]}}{\mathbf{n}[\xi]!} .
        \end{split}
    \end{equation*}
    This concludes the proof.
\end{proof}

We next describe the special case of Proposition~\ref{proposition: general currents} that will be most relevant to us. In other words, in the proposition below we obtain a current expansion of the compact abelian lattice Higgs model with charge \( k \geq 2. \)

 \begin{proposition} \label{proposition: current for Higgs with charge}
	Let \( \mu \) be the uniform distribution on \( \Omega_2(B_N,U(1)), \) and let
	\begin{equation*}
		\mathcal{C} \coloneqq \bigl\{ \mathbf{n} \colon C_1(B_N)\cup C_2(B_N) \to \mathbb{N} \bigr\}.
	\end{equation*}
	Let \( \gamma \) be a path, let \( k \geq 0, \) and let
	\begin{equation*}
		\mathcal{C}_{\gamma,k} \coloneqq \bigl\{ \mathbf{n} \in \mathcal{C} \colon (\mathbf{n}[\hat \partial e]-\mathbf{n}[-\hat \partial e]) + k(\mathbf{n}[e]-\mathbf{n}[-e]) +  \gamma[e]= 0,  \; \forall e\in C_1(B_N) \bigr\}.
	\end{equation*}
	Further, let \( \beta,\kappa \geq 0, \) and for \( \mathbf{n} \in \mathcal{C}_{\gamma,k}, \) let
	\begin{equation*}
		w_{\beta,\kappa}(\mathbf{n}) \coloneqq \prod_{p \in C_2(B_N) } \frac{ \beta^{\mathbf{n}[p]}}{\mathbf{n}[p]!} \prod_{e \in C_1(B_N) } \frac{ \kappa^{\mathbf{n}[e]}}{\mathbf{n}[e]!} .
	\end{equation*}
	Then
    \begin{equation*} 
        \begin{split}
        & \int \rho(\sigma(\gamma)) e^{ \beta \sum_{p \in C_2(B_N)} \rho(d\sigma(p)) + \kappa \sum_{e \in C_1(B_N)} \rho(\sigma(e))^k} \, d\mu(\sigma)
        =
        \sum_{n \in \mathcal{C}_{\gamma,k}} w_{\beta,\kappa}(n) .
        \end{split}
    \end{equation*} 
\end{proposition}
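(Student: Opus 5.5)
The plan is to obtain Proposition~\ref{proposition: current for Higgs with charge} as the special case \( \ell = 1 \) of Proposition~\ref{proposition: general currents}, with
\[
\Gamma = \{\partial p \colon p \in C_2(B_N)\} \cup \{k\cdot e \colon e \in C_1(B_N)\}.
\]
The weights are \( \beta_{\partial p} = \beta \) and \( \beta_{k e} = \kappa \), as in item (4) of the list preceding Proposition~\ref{proposition: general currents}. (The uniform measure should of course be on \( \Omega_1(B_N,U(1)) \); I read the \( \Omega_2 \) in the statement as a typo.)

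The set \( \Gamma \) is symmetric, since \( -\partial p = \partial(-p) \) and \( -ke = k(-e) \). Moreover \( \rho(\sigma(\partial p)) = \rho(d\sigma(p)) \) and \( \rho(\sigma(ke)) = \rho(\sigma(e))^k \). Hence the exponent in Proposition~\ref{proposition: general currents} is exactly the Hamiltonian~\eqref{eq: charged hamiltonian}, and the left-hand sides of the two propositions agree.

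Next I will identify a current \( \mathbf{n}\colon \Gamma \to \mathbb{N} \) with a function on \( C_1(B_N) \cup C_2(B_N) \), via \( \mathbf{n}[p] \coloneqq \mathbf{n}[\partial p] \) and \( \mathbf{n}[e] \coloneqq \mathbf{n}[ke] \). This is a bijection provided the maps \( p \mapsto \partial p \) and \( e \mapsto ke \) are injective and have disjoint images.

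\begin{itemize}
\item Injectivity of \( p \mapsto \partial p \) on oriented plaquettes is clear.
\item Injectivity of \( e \mapsto ke \) holds for \( k \geq 1 \).
\item Disjointness of the images holds because \( \partial p \) has four edges while \( ke \) is supported on one edge.
\end{itemize}

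The degenerate case \( k=0 \) needs a separate remark. There \( \rho(\sigma(e))^0 = 1 \), so the \( \kappa \)-term contributes the constant factor \( e^{\kappa |C_1(B_N)|} \). This matches the sum over the free variables \( \mathbf{n}[e] \), which are unconstrained when \( k=0 \) and give \( \prod_e \sum_{n} \kappa^n/n! = e^{\kappa|C_1(B_N)|} \). I would treat it directly rather than through the general proposition, whose \( \Gamma \) would then contain the zero chain.

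Under this identification the weight \( w(\mathbf{n}) \) of Proposition~\ref{proposition: general currents} becomes \( w_{\beta,\kappa}(\mathbf{n}) \) immediately. The main step is to check that the constraint sets agree, i.e.\ that \( \mathcal{C}_\gamma \) corresponds to \( \mathcal{C}_{\gamma,k} \). For an edge \( e \), I expand the general definition:
\[
\mathbf{n}[\hat\partial e] = \sum_{\xi\in\Gamma}\xi[e]\,\mathbf{n}[\xi] = \sum_{p} (\partial p)[e]\,\mathbf{n}[p] + \sum_{e'} (ke')[e]\,\mathbf{n}[e'].
\]
The plaquette part is the plaquette coboundary count. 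The edge part equals \( k\mathbf{n}[e] - k\mathbf{n}[-e] \), because \( (ke')[e] = k \) if \( e'=e \), equals \( -k \) if \( e'=-e \), and is \( 0 \) otherwise.

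There is a bookkeeping subtlety with the antisymmetric combination \( \mathbf{n}[\hat\partial e]-\mathbf{n}[-\hat\partial e] \) in Proposition~\ref{proposition: general currents}. Since \( \xi[-e] = -\xi[e] \), the sum over signed terms already produces the difference between \( e \) and \( -e \), and the paper's notation effectively counts it once. I will match the conventions so that the plaquette part is \( \mathbf{n}[\hat\partial e]-\mathbf{n}[-\hat\partial e] \) and the edge part is \( k(\mathbf{n}[e]-\mathbf{n}[-e]) \), exactly as written in \( \mathcal{C}_{\gamma,k} \).

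I expect this sign and orientation bookkeeping to be the only real obstacle. If the conventions fight, the fallback is to redo the short computation of Proposition~\ref{proposition: general currents} directly:
\begin{itemize}
\item expand each exponential \( e^{\beta\rho(d\sigma(p))} \) and \( e^{\kappa\rho(\sigma(e))^k} \) as a power series;
\item integrate each \( \sigma(e) \) separately against the Haar measure;
\item observe that the net exponent of \( \rho(\sigma(e)) \) is \( \sum_{p}(\partial p)[e]\,\mathbf{n}[p] + k(\mathbf{n}[e]-\mathbf{n}[-e]) + \gamma[e] \), which must vanish.
\end{itemize}
Exchanging the sum and the integral is justified by absolute convergence, since all the series have nonnegative coefficients and converge uniformly on the compact space.
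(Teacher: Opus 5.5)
Your proposal is correct and follows the paper's proof, which is exactly the one-line specialization of Proposition~\ref{proposition: general currents} with \( \ell = 1 \), \( \Gamma = \{\partial p\} \cup \{k\cdot e\} \), and weights \( \beta \) and \( \kappa \); the paper writes \( j\cdot e \) there, which is a typo. Several details that the paper's proof leaves implicit are handled correctly in your write-up: the separate treatment of \( k=0 \) (where \( e \mapsto 0\cdot e \) is not injective), the \( \Omega_2 \) typo, and the orientation bookkeeping in \( \mathbf{n}[\hat\partial e]-\mathbf{n}[-\hat\partial e] \), with a direct recomputation as fallback.
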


\begin{proof}
	Letting \( \ell =1 \), \( {\Gamma = \bigl\{ \partial p \colon p\in C_2(B_N) \bigr\} \cup \{j\cdot e\colon e \in C_1(B_N)\},} \) and for \( \gamma \in \Gamma, \) letting
	\[ \beta_\gamma = \begin{cases}
		\beta &\text{if } \gamma = \partial p \text{ for some } \in p\in C_2(B_N) \cr \kappa &\text{else,}
	\end{cases}\] 
	the desired conclusion follows immediately from Proposition~\ref{proposition: general currents}.
\end{proof}

For a path \( \gamma \) and in integer \( k \geq 1, \) we let \( \mathbf{P}^{\gamma,k} \) be the measure on \( \mathcal{C}_{\gamma,k} \) induced by \( w_{\beta,\kappa}, \) i.e., the measure defined by
 \begin{equation*}
 	\mathbf{P}^{\gamma,k}(\mathbf{n}) \coloneqq \mathbf{P}^{\gamma,k}_{\beta,\kappa}(\mathbf{n}) \coloneqq  \frac{w_{\beta,\kappa}(\mathbf{n})}{\sum_{\mathbf{m} \in \mathcal{C}_{\gamma,k}} w_{\beta,\kappa}(\mathbf{m)}},\quad \mathbf{n} \in \mathcal{C}_{\gamma,k}.
 \end{equation*}

\subsection{A polymer expansion for the Higgs/confinement phase}\label{section: new version of expansion}

In this section, we present an expansion of \( Z_{j,k}[\gamma] \) which can be used as a polymer expansion when either \( \beta \) is sufficiently small or \( \kappa \) is sufficiently large.

\subsubsection{Charge 1}\label{section: charge 1 expansion}

In this section, we consider the case \( k = 1 \) and rewrite \( Z_{j,1}[\gamma] \) as a sum of clusters. The main idea behind this expansion originates from~\citep[Section 3, Section 4]{os1978} (see also~\citep[Appendix]{fs1979}), and is based on the observation that if \( \beta \) is small \emph{or} \( \kappa \) is large, the term \( e^{\beta ( \Re  \rho(d\sigma(p)) -1)} -1 \) in \( Z_{j,k}[\gamma] \) will typically be small.

Recall that we say that a set \( C \subseteq C_k(B_N)^+ \) is connected if the graph \( \mathcal{G}(C) \) is connected, where \( \mathcal{G}(C) \) is the graph  with vertex set \( C \) and an edge between \( c_1,c_2 \in C \) if \( \support \partial c_1 \cap \support \partial c_2 \neq \emptyset. \) With this notation, we note that each \( P \subseteq C_2(B_N)^+ \) induces a partition \( \mathcal{P}_P \) of \( P  \) into connected sets. For \( P \subseteq C_2(B_N)^+ \) and \( {e \in C_1(B_N)^+,} \) we write \( e \sim P \) if there is \( p \in P \) such that \( e \in \support \partial p. \)

\color{black}

\begin{proposition}\label{proposition: polymer expansion}
	Let \( \beta,\kappa \geq 0, \)  let \( j \geq 1,\) and let \( \gamma \) be a path. Then
	\begin{equation}\label{eq: polymer expansion}
		Z_{j,1}[\gamma]
		=
		e^{\beta |C_2(B_N)|} b_0^{|C_1(B_N)^+| } (b_j/b_0)^{|\gamma|} 
		\sum_{P \subseteq C_2(B_N)^+}  
		\prod_{P' \in \mathcal{P}_P}  \phi_\gamma(P') ,
	\end{equation} 
	where, for \( i \geq 0, \) we have
	\begin{align*}
		b_i \coloneqq \int  \rho(\sigma(e))^i e^{ 2\kappa   \Re \rho(\sigma(e)) } \, d\mu(\sigma)
	\end{align*} 
	and 
	\begin{align*}
		&\phi_\gamma(P') \coloneqq   (b_0 /b_j)^{|\{ e \in \gamma \colon e \sim P' \}|} 
		\int \prod_{e \in \gamma \colon e \sim P'}\rho(\sigma(e))^j  \prod_{p \in P'}
		\bigl( e^{\beta ( \Re  \rho(d\sigma(p)) -1)} -1\bigr) \prod_{e \in C_1(B_N)^+}\frac{ e^{ 2\kappa  \Re \rho(\sigma(e)) } \, d\mu(\sigma)}{b_0}.
	\end{align*}
\end{proposition}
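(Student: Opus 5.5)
The approach is a Mayer-type expansion of the plaquette weights, followed by factorization over connected components. I write the Gibbs weight with real parts. Summing over both orientations of each plaquette and edge, the Hamiltonian contributes \( e^{2\beta\sum_{p\in C_2(B_N)^+}\Re\rho(d\sigma(p))}\) and \( e^{2\kappa\sum_{e\in C_1(B_N)^+}\Re\rho(\sigma(e))}\). With \(\beta|C_2(B_N)|=2\beta|C_2(B_N)^+|\), the plaquette factor is \( e^{\beta|C_2(B_N)|}\prod_{p\in C_2(B_N)^+} e^{\beta(\Re\rho(d\sigma(p))-1)}\). The factor of \(2\) hidden in the \(\beta\) exponent has to be matched against the paper's conventions; I will adopt whatever normalization makes the stated prefactor \(e^{\beta|C_2(B_N)|}\) correct. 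For each positively oriented plaquette I then write \( e^{\beta(\Re\rho(d\sigma(p))-1)} = 1 + \bigl(e^{\beta(\Re\rho(d\sigma(p))-1)}-1\bigr)\) and expand the product. This gives
\[
Z_{j,1}[\gamma]=e^{\beta|C_2(B_N)|}\sum_{P\subseteq C_2(B_N)^+}\int \rho(\sigma(\gamma))^j\prod_{p\in P}\bigl(e^{\beta(\Re\rho(d\sigma(p))-1)}-1\bigr)\prod_{e\in C_1(B_N)^+}e^{2\kappa\Re\rho(\sigma(e))}\,d\mu(\sigma).
\]

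Next, for fixed \(P\), I factorize the integral. Since \(\mu\) is a product measure over positively oriented edges and the edge weights \(e^{2\kappa\Re\rho(\sigma(e))}\) also factor over edges, the measure \(\prod_e e^{2\kappa\Re\rho(\sigma(e))}d\mu/b_0\) is a product probability measure. The integrand is a product of functions of disjoint groups of edge variables:
\begin{enumerate}
\item for each component \(P'\in\mathcal P_P\), the plaquette factors of \(P'\) together with \(\rho(\sigma(e))^j\) for the edges \(e\in\gamma\) with \(e\sim P'\);
\item for each edge \(e\in\gamma\) not adjacent to any plaquette of \(P\), the single factor \(\rho(\sigma(e))^j\).
\end{enumerate}
Disjointness holds because distinct components of \(\mathcal P_P\) share no edge: two plaquettes sharing an edge are adjacent in \(\mathcal G\), since the supports of their boundaries intersect. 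Also, each edge of \(\gamma\) is adjacent to at most one component, and \(\gamma\) has \(\{-1,0,1\}\) coefficients. A sign subtlety arises for edges traversed in negative orientation, which contribute \(\rho(\sigma(e))^{-j}\). This is handled by the symmetry \(\sigma\mapsto-\sigma\) on that edge, or equivalently by noting \(b_{-j}=b_j\) because \(b_j\) is real. I would treat \(\gamma\) as a set of oriented edges and interpret \(\rho(\sigma(e))^j\) accordingly.

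Each free edge of \(\gamma\) then contributes \(b_j/b_0\), and the whole integral equals \(b_0^{|C_1(B_N)^+|}\) times these factors times the product of the component integrals. Write the number of free edges as \(|\gamma|-\sum_{P'}|\{e\in\gamma: e\sim P'\}|\). Then \(\prod_{\text{free}}(b_j/b_0)=(b_j/b_0)^{|\gamma|}\prod_{P'}(b_0/b_j)^{|\{e\in\gamma:e\sim P'\}|}\), and absorbing these correction factors into the component integrals produces exactly \(\phi_\gamma(P')\), which gives \eqref{eq: polymer expansion}.

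The main obstacle is bookkeeping rather than analysis. First, every edge appearing in a component's integral must be assigned to exactly one factor, including edges of \(P'\) not on \(\gamma\); these contribute trivially because the normalized measure is a probability measure. Second, \(b_j\neq0\) is needed for the division; this holds for \(\kappa>0\) via \(b_j=\int\cos(j\theta)e^{2\kappa\cos\theta}\) being a positive Bessel function. At \(\kappa=0\) with \(j\ge1\), I would instead interpret the identity with the factors \((b_j/b_0)^{|\gamma|}\) and \((b_0/b_j)^{\cdots}\) combined before dividing.
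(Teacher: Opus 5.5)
Your proposal is correct and follows essentially the same route as the paper: expand $\prod_{p\in C_2(B_N)^+}\bigl(1+(e^{2\beta(\Re\rho(d\sigma(p))-1)}-1)\bigr)$ over subsets $P$, factor the integral under the product probability measure $\prod_{e}e^{2\kappa\Re\rho(\sigma(e))}\,d\mu/b_0$ over the edge-disjoint components of $P$ and the free edges of $\gamma$, and then redistribute the $(b_j/b_0)$ factors. The one thing to fix is the normalization you flagged: with prefactor $e^{\beta|C_2(B_N)|}$ the plaquette activities must be $e^{2\beta(\Re\rho(d\sigma(p))-1)}-1$, not $e^{\beta(\cdots)}-1$ as in your display and in the statement, and this $2\beta$ version is what the paper's proof and Lemma~\ref{lemma: upper Holder bound 1} actually use.
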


\begin{remark}
	The constants \( (b_i)_{i \geq 0} \) appearing in Proposition~\ref{proposition: polymer expansion} can be calculated explicitly, as \( b_i = I_i(2\kappa), \) where \( z \mapsto I_i(z)\) is the modified Bessel function of the first kind with index \( i \).
\end{remark}

\begin{proof}[Proof of Proposition~\ref{proposition: polymer expansion}]
	First, note that for any \( \sigma \in \Omega_1(B_N,U(1)) \), we can write
	\begin{align*}
		&e^{\beta \sum_{p\in C_2(B_N)} \rho(d\sigma(p))  }
		=
		e^{2\beta \sum_{p\in C_2(B_N)^+} \Re \rho(d\sigma(p))  }
		=
		\prod_{p\in C_2(B_N)^+}
		e^{2\beta  \Re \rho(d\sigma(p)) }
		\\&\qquad =
		e^{\beta |C_2(B_N)|} \prod_{p\in C_2(B_N)^+}
		\bigl( 1+e^{\beta ( \Re \rho(d\sigma(p)) -1)} -1\bigr) 
		\\&\qquad =
		e^{2\beta |C_2(B_N)|}  \sum_{P \subseteq C_2(B_N)^+ } \prod_{p \in P}
		\bigl( e^{2\beta ( \Re  \rho(d\sigma(p)) -1)} -1\bigr) ,
	\end{align*}
	and hence
	\begin{align*}
		&\rho(\sigma(\gamma))^j e^{\beta \sum_{p\in C_2(B_N)} \rho(d\sigma(p)) + \kappa \sum_{e  \in C_1(B_N)} \rho(\sigma(e)) }
		\\&\qquad =
		e^{2\beta |C_2(B_N)|} \prod_{e \in \gamma}\rho(\sigma(e))^j  \sum_{P \subseteq C_2(B_N)^+ } \prod_{p \in P}
		\bigl( e^{2\beta ( \Re  \rho(d\sigma(p)) -1)} -1\bigr) e^{ 2\kappa \sum_{e  \in C_1(B_N)^+} \Re \rho(\sigma(e)) }.
	\end{align*}
	Now note that for any \( P \subseteq C_2(B_N)^+ \) we have
	\begin{align*}
		&\int \prod_{e \in \gamma}\rho(\sigma(e))^j  \prod_{p \in P}
		\bigl( e^{2\beta ( \Re  \rho(d\sigma(p)) -1)} -1\bigr) e^{ 2\kappa \sum_{e  \in C_1(B_N)^+} \Re \rho(\sigma(e)) } \, d\mu(\sigma)
		\\&\qquad=
		b_0^{|C_1(B_N)^+|}\int \prod_{e \in \gamma}\rho(\sigma(e))^j  \prod_{p \in P}
		\bigl( e^{2\beta ( \Re  \rho(d\sigma(p)) -1)} -1\bigr) \prod_{e \in C_1(B_N)^+} \frac{e^{ 2\kappa  \Re \rho(\sigma(e)) }}{b_0} \, d\mu(\sigma)
		\\&\qquad 
		= b_0^{|C_1(B_N)^+|}  (b_j/b_0)^{|\{ e \in \gamma \colon e \nsim P \}|} \prod_{P' \in \mathcal{P}_P} \hat \phi_\gamma(P') ,
	\end{align*} 
	where 
	\begin{align*}
		\hat \phi_\gamma(P') \coloneqq\int \prod_{e \in \gamma \colon e \sim P'}\rho(\sigma(e))^j  \prod_{p \in P'}
		\bigl( e^{2\beta ( \Re  \rho(d\sigma(p)) -1)} -1\bigr) \frac{e^{ 2\kappa  \Re \rho(\sigma(e)) }}{b_0}  \, d\mu(\sigma).
	\end{align*} 
	Finally, noting that
	\begin{align*}
		&\phi_\gamma(P') = 
		\frac{\hat \phi_\gamma(P')}{ (b_j/b_0)^{|\{ e \in \gamma \colon e \sim P' \}|}},
	\end{align*}
	we obtain
	\begin{align*}
		&Z_{j,1}[\gamma]
		=
		\int \rho(\sigma(\gamma))^j e^{\beta \sum_{p\in C_2(B_N)} \rho(d\sigma(p)) + \kappa \sum_{e  \in C_1(B_N)} \rho(\sigma(e)) }\, d\mu(\sigma)
		\\&\qquad=
		e^{\beta |C_2(B_N)|} b_0^{|C_1(B_N)^+| } (b_j/b_0)^{|\gamma|} 
		\sum_{P \subseteq C_2(B_N)^+}  
		\prod_{P' \in \mathcal{P}_P}  \phi_\gamma(P').
	\end{align*}
	This concludes the proof.
\end{proof}

In order for~\eqref{eq: polymer expansion} to be useful as a polymer expansion, we need to show that the polymers appearing in this expansion are typically small. This is the purpose of the next lemma.

\begin{lemma}\label{lemma: upper Holder bound 1}
	Let \( \beta,\kappa \geq 0, \) let \( j \geq 1, \) and let \( p_0 \in C_2(B_N)^+. \) There is a constant \( a_m \) that only depends on \( m \) such that for any connected set \( P \subseteq C_2(B_N)^+, \) we have
	\begin{equation}\label{eq: upper Holder bound 1}
		\bigl| \phi_\gamma(P) \bigr|
		\leq 
		 (b_0/b_j)^{|\{ e \in \gamma \colon  e \sim P \} |} 
		\Bigl( \int 
		\bigl| e^{2\beta ( \Re  \rho(d\sigma(p_0)) -1)} -1\bigr|^{a_m}
		\prod_{\substack{e \in \support\partial p_0 
		}} \frac{
		e^{ 2\kappa \Re \rho(\sigma(e)) } }{ b_{0} } 
		\, d\mu(\sigma)
		\Bigr)^{|P|/a_m}.
	\end{equation}
	In particular, for any \( \kappa >0 \), we have
	\begin{align*}
		&\sup_{P \subseteq C_2(B_N)^+}\bigl|\phi_\gamma(P)\bigr|^{1/|P|} \leq
		(b_{0}/b_{j})^4 (1-e^{-4\beta})   \searrow 0 \quad \text{as } \beta \to 0,
	\end{align*}
	and for any \( \beta>0, \) we have
	\begin{align*}
		&\lim_{\kappa \to \infty} \sup_{P \subseteq C_2(B_N)^+}\bigl| \phi_\gamma(P) \bigr|^{1/|P|} 
		\leq C_{\beta,\kappa} \searrow 0 \quad \text{as } \kappa \to \infty.
	\end{align*}
\end{lemma}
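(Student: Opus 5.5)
The bound is a generalized Hölder inequality applied to a product measure, after isolating the factors $\rho(\sigma(e))^j$, which have modulus one. Write
\[
\nu(d\sigma)=\prod_{e\in C_1(B_N)^+} \frac{e^{2\kappa\Re\rho(\sigma(e))}}{b_0}\,d\mu(\sigma).
\]
This is a product probability measure on the edge variables. Since $|\rho(\sigma(e))^j|=1$, we get
\[
|\phi_\gamma(P)|\le (b_0/b_j)^{|\{e\in\gamma: e\sim P\}|}\int \prod_{p\in P} f_p\,d\nu,
\qquad f_p(\sigma)=\bigl|e^{2\beta(\Re\rho(d\sigma(p))-1)}-1\bigr|.
\]
Each $f_p$ depends only on the four edges of $\partial p$. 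The plan is to split $P$ into $a_m$ classes $P_1,\dots,P_{a_m}$ such that within one class no two plaquettes share a boundary edge. Since each edge lies in at most $2(m-1)$ plaquettes, the conflict graph has bounded degree, and a greedy colouring, or an explicit colouring by parity of coordinates, gives $a_m$ depending only on $m$.

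With this partition, apply Hölder's inequality with $a_m$ equal exponents:
\[
\int\prod_{p\in P}f_p\,d\nu\le\prod_{i=1}^{a_m}\Bigl(\int\prod_{p\in P_i}f_p^{a_m}\,d\nu\Bigr)^{1/a_m}.
\]
Within a class the functions $f_p$ depend on disjoint sets of edges, so under the product measure $\nu$ the integral factorizes. By translation and rotation invariance, each factor equals
\[
\int f_{p_0}^{a_m}\prod_{e\in\partial p_0}\frac{e^{2\kappa\Re\rho(\sigma(e))}}{b_0}\,d\mu .
\]
Multiplying over all classes gives this quantity raised to the power $\sum_i|P_i|/a_m=|P|/a_m$, which is \eqref{eq: upper Holder bound 1}.

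For the consequences, use $f_{p_0}\le 1-e^{-4\beta}\le 1$, so the Hölder integral is at most $(1-e^{-4\beta})^{a_m}$. Each edge of $\gamma$ with $e\sim P$ lies in $\partial p$ for some $p\in P$, and each plaquette has four edges, so the count is at most $4|P|$. Since $b_0\ge b_j$, this gives $|\phi_\gamma(P)|^{1/|P|}\le (b_0/b_j)^4(1-e^{-4\beta})$, which tends to $0$ as $\beta\to0$ for fixed $\kappa$.

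The large-$\kappa$ limit is the delicate step, because $b_0/b_j=I_0(2\kappa)/I_j(2\kappa)\to1$ but the crude bound $f_{p_0}\le1$ does not decay. Instead use $f_{p_0}\le 2\beta|1-\Re\rho(d\sigma(p_0))|\le 2\beta\sum_{e\in\partial p_0}|\theta_e|$. Under the normalized measure $e^{2\kappa\cos\theta}/b_0$, Laplace asymptotics give $\mathbb E|\theta|^{a_m}=O(\kappa^{-a_m/2})$. Hence the Hölder integral is $O\bigl((\beta^2/\kappa)^{a_m/2}\bigr)$, so its $1/a_m$ power tends to $0$, while $(b_0/b_j)^4\to1$. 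Together these give $C_{\beta,\kappa}\searrow0$ as $\kappa\to\infty$.

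The main obstacle is checking the colouring and factorization carefully, in particular that plaquettes in the same class depend on disjoint edge sets. The Bessel-function asymptotics $I_j(z)/I_0(z)\to1$ are standard.
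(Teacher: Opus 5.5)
Your proposal is correct and follows the paper's proof. You partition $P$ into at most $a_m$ classes of plaquettes with pairwise disjoint boundary edges, apply H\"older's inequality with respect to the product probability measure, and factorize within each class to obtain the single-plaquette integral raised to the power $|P|/a_m$. Your derivations of the two consequences fill in details the paper leaves implicit: for small $\beta$ you use $f_{p_0}\le 1-e^{-4\beta}$ together with the fact that at most $4|P|$ edges of $\gamma$ are adjacent to $P$, and for large $\kappa$ you use $f_{p_0}\le 2\beta\sum_{e\in\partial p_0}|\theta_e|$ with Laplace asymptotics.
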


 \begin{remark}
 	In Figure~\ref{fig: expansionweight}, we draw the level sets of the right-hand side of~\eqref{eq: upper Holder bound 1} for \( a_m = 6. \) In particular, we note that to ensure that this upper bound is small when \( \kappa \) is small, \( \beta \) needs to be small \emph{compared to} \( \kappa. \) This further motivates complementing the expansion described in~Proposition{proposition: polymer expansion} with the current expansion described in~Proposition~\ref{proposition: current for Higgs with charge}, as there the corresponding upper bound can be bounded from above uniformly in \( \beta \) when \( \kappa \) is small.
 \end{remark}

\begin{proof}[Proof of Lemma~\ref{lemma: upper Holder bound 1}]
	Let \( P \subseteq C_2(B_N). \) Then \( P \) can be partitioned into at most \( a_m \) sets \( P_1, \dots, P_{a_m} \) such that for each set, no two plaquettes have a common edge in their boundary. Fix such a partition \( (P_i)_{i \in I} \). Then 
	\begin{align*}
		&\bigl| \phi_\gamma(P) \bigr|
		= 
		(b_0 /b_j)^{|\{ e \in \gamma \colon e \sim P \}|}  
		\int 
		\prod_{p \in P}
		\bigl| e^{2\beta ( \Re  \rho(d\sigma(p)) -1)} -1\bigr|
		\prod_{\substack{e \in C_1(B_N)^+ \colon \\ e \sim P
		}} \frac{
		e^{ 2\kappa \Re \rho(\sigma(e)) } }{ b_0 }
		\, d\mu(\sigma)
		\\&\qquad
		= 
		(b_0 /b_j)^{|\{ e \in \gamma \colon e \sim P \}|}
		\int 
		\prod_{i \in I} \prod_{p \in P_i}
		\bigl| e^{2\beta ( \Re  \rho(d\sigma(p)) -1)} -1\bigr|
		\prod_{\substack{e \in C_1(B_N)^+ \colon \\ e \sim P
		}} \frac{
		e^{ 2\kappa \Re \rho(\sigma(e)) } }{ b_0 }
		\, d\mu(\sigma)
		.
	\end{align*}
	Now note that the measure 
	\[
	 \prod_{\substack{e \in C_1(B_N)^+ \colon \\ e \sim P
		}} \frac{
		e^{ 2\kappa \Re \rho(\sigma(e)) } }{ b_0 }
		\, d\mu(\sigma) 
	\]
	is a probability measure. Applying H\"olders inequality with respect to this  measure, it follows that
	\begin{align*}
		&  
		\int 
		\prod_{i \in I} \prod_{p \in P_i}
		\bigl| e^{2\beta ( \Re  \rho(d\sigma(p)) -1)} -1\bigr|
		\prod_{\substack{e \in C_1(B_N)^+ \colon \\ e \sim P
		}} \frac{
		e^{ 2\kappa \Re \rho(\sigma(e)) } }{ b_0 }
		\, d\mu(\sigma)
		\\&\qquad\leq  
		\prod_{i \in I}
		\Bigl( \int 
		\prod_{p \in P_i}
		\bigl| e^{2\beta ( \Re  \rho(d\sigma(p)) -1)} -1\bigr|^{|I|} 
		\prod_{\substack{e \in C_1(B_N)^+ \colon \\ e \sim P
		}} \frac{
		e^{ 2\kappa \Re \rho(\sigma(e)) } }{ b_{0} }
		\, d\mu(\sigma)
		\Bigr)^{1/|I|}.
		\end{align*}
		Since the plaquettes in each \( P_i \) have no common edges in their boundary, it follows that the right-hand side of the previous equation is equal to 
		\begin{align*}
		&
		\prod_{i \in I} \prod_{p \in P_i}
		\Bigl( \int 
		\bigl| e^{2\beta ( \Re  \rho(d\sigma(p)) -1)} -1\bigr|^{|I|} 
		\prod_{\substack{e \in C_1(B_N)^+ \colon \\ e \sim P
		}} \frac{
		e^{ 2\kappa \Re \rho(\sigma(e)) } }{ b_{0} }
		\, d\mu(\sigma)
		\Bigr)^{1/|I|}
		\\&\qquad= 
		\prod_{i \in I} \prod_{p \in P_i}
		\Bigl( \int 
		\bigl| e^{2\beta ( \Re  \rho(d\sigma(p)) -1)} -1\bigr|^{|I|} 
		\prod_{\substack{e \in \partial p
		}} \frac{
		e^{ 2\kappa \Re \rho(\sigma(e)) } }{ b_{0} }
		\, d\mu(\sigma)
		\Bigr)^{1/|I|}
		\\&\qquad= 
		\Bigl( \int 
		\bigl| e^{2\beta ( \Re  \rho(d\sigma(p_0)) -1)} -1\bigr|^{|I|} 
		\prod_{\substack{e \in \partial p_0
		}} \frac{
		e^{ 2\kappa \Re \rho(\sigma(e)) } }{ b_{0} }
		\, d\mu(\sigma)
		\Bigr)^{|P|/|I|}.
	\end{align*} 
	Combining the above equations, since \( |I| \leq a_m \), the desired conclusion follows. 
\end{proof}

\subsubsection{Charge \( \geq 2 \)}\label{section: charge k expansion}

In this section, we extend the ideas of Section~\ref{section: charge 1 expansion} to the case \( k \geq 2, \) i.e., to the compact abelian Higgs model with charge \( k \geq 2. \)

The following observation will be fundamental for this section. Assume that \( k \geq 1 \) and let \( U(1)_k \) be the restriction of \( U(1) \) to angles in \( (-\pi/k,\pi/k). \) Then, for any \( g \in U(1)_k \) there are unique \( h \in U(1) \) and  \( h' \in \mathbb{Z}_k = \{ g \in U(1)\colon g^k = 1\} \) such that
	\begin{equation*}
		\rho(g) = \rho(h)\rho(h'),
	\end{equation*}
	and hence
	\begin{equation*}
		\rho(g)^k = \rho(h)^k\rho(h')^k = \rho(h)^k.
	\end{equation*} 
	Using this notation, we can write
	\begin{equation}\label{eq: new notation for angles}\begin{split}
		 &Z_{j,k}[\gamma] = \int \rho(\sigma(\gamma))^j e^{\beta \sum_{p\in C_2(B_N)} \rho(d\sigma(p)) + \kappa \sum_{e  \in C_1(B_N)} \rho(\sigma(e))^k } \, d\mu(\sigma)
		 \\&\qquad=
		 \int \rho(\theta(\gamma))^j \rho(\theta'(\gamma))^j   e^{\beta \sum_{p\in C_2(B_N)} \rho(\theta(\partial p)) \rho(\theta'(\partial p)) + \kappa \sum_{e  \in C_1(B_N)} \rho(\theta)^k} \, d\mu(\theta, \theta'),
	\end{split}\end{equation}
	where \( \mu(\theta,\theta ') \) is the uniform measure on \( \Omega_1(B_N,U(1)_k \times \mathbb{Z}_k)\).

Given \( P \subseteq C_2(B_N)^+\) and \( \theta' \in \Omega_1(B_N,\mathbb{Z}_k), \) we say that \( (P,\theta') \) is \emph{connected} if \( \mathcal{G}(P \cap (\support d\theta')^+) \)  is connected. For \( P \subseteq C_2(B_N)^+ \) and \( \theta' \subseteq \Omega_1(B_N,\mathbb{Z}_k) \), let  \( \mathcal{P}_{P, \theta'} \) be the set of all maximal connected subsets of  \( (P,\hat \theta') .\)

\begin{proposition}\label{proposition: expansion 2}
	Let \( \beta,\kappa \geq 0, \) let \(j \geq 0 \) and \(k \geq 1,\) and let \( e_0 \in C_1(B_N)^+. \) Then
	
	\begin{align*}
		 Z_{j,k}[\gamma] 
		 &= e^{\beta|C_2(B_N)|} k^{-|C_1(B_N)|} b_{j,k}^{|\gamma|}
		 \sum_{P,\hat \theta}
		 \prod_{(P',\hat \theta') \in \mathcal{P}_{P,\hat \theta}}
		 \phi(P',\hat \theta'),
	\end{align*}
	where for \( i \geq 0 \) we have
	\begin{align*}
		b_{i,k} \coloneqq \int \rho(\theta(e_0))^i e^{2\kappa \rho(\theta(e_0))^{k}} \, d\mu(\theta,\theta')
	\end{align*}
	and
	\begin{align*}
		\phi(P',\hat \theta') &\coloneqq 
		\prod_{\substack{e \in \gamma \colon\\ e \sim (P',\hat \theta')}} (b_{0,k}/b_{j,k})  \prod_{p\in (\support \hat \theta')^+ } e^{2\beta (\Re \rho(\hat \theta'(\partial p)) -1)}
		\\&\qquad \cdot \int \prod_{\substack{e \in C_1(B_N)^+ \colon \\ e \sim (P',\hat \theta')}} \rho(\theta(e))^j  
		 \prod_{p \in (P',\hat \theta')} \bigl( e^{2\beta \Re \rho(\theta'(\partial p)) (\rho(\theta(\partial p)) -1)}-1\bigr) \prod_{e \in C_1(B_N)^+}\frac{e^{2\kappa \Re \rho(\theta(e))^k  }}{b_{0,k}} \, d\mu(\theta) .
	\end{align*} 
\end{proposition}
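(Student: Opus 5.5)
The plan is to mirror the proof of Proposition~\ref{proposition: polymer expansion}, starting from the decomposition~\eqref{eq: new notation for angles} of each edge variable as \( \sigma(e) = \theta(e) + \theta'(e) \), with \( \theta(e) \in U(1)_k \) and \( \theta'(e) \in \mathbb{Z}_k \). This change of variables is measure preserving up to the normalisation \( k^{-|C_1(B_N)|} \) coming from the uniform measure on \( \mathbb{Z}_k \). Since \( \rho(\theta'(e))^k = 1 \), the Higgs term \( e^{2\kappa \Re \rho(\theta(e))^k} \) depends only on \( \theta \).

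The plaquette term depends on both variables, since \( \rho(d\sigma(p)) = \rho(\theta(\partial p))\rho(\theta'(\partial p)) \). For each \( p \in C_2(B_N)^+ \) I first extract the "frozen" factor \( e^{2\beta(\Re \rho(\theta'(\partial p)) - 1)} \), which is the value of the plaquette weight at \( \theta \equiv 0 \). I then write the remaining ratio as \( 1 + (\text{ratio} - 1) \), which equals \( 1 + \bigl(e^{2\beta \Re[\rho(\theta'(\partial p))(\rho(\theta(\partial p)) - 1)]} - 1\bigr) \). Expanding the product over \( p \) gives a sum over \( P \subseteq C_2(B_N)^+ \), and pulling out \( e^{2\beta|C_2(B_N)^+|} = e^{\beta|C_2(B_N)|} \) accounts for the prefactor.

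Next, for fixed \( \theta' \) (written \( \hat\theta \) in the statement), I integrate over \( \theta \) with respect to the product probability measure \( \prod_e e^{2\kappa \Re\rho(\theta(e))^k} \, d\mu(\theta)/b_{0,k} \). The integrand factorises over the connected components of \( (P, \hat\theta) \), that is, over \( \mathcal{P}_{P,\hat\theta} \), because:
\begin{itemize}
\item components share no edges, and under a product measure integrals over disjoint edge sets factor;
\item each edge of \( \gamma \) not touched by any component contributes a single-edge factor \( b_{j,k}/b_{0,k} \);
\item the frozen factors \( e^{2\beta(\Re\rho(\hat\theta(\partial p)) - 1)} \) are deterministic given \( \hat\theta \) and are grouped into the component containing \( \support d\hat\theta \).
\end{itemize}
Exactly as in Proposition~\ref{proposition: polymer expansion}, multiplying and dividing by \( (b_{j,k}/b_{0,k}) \) for each edge of \( \gamma \) touched by a component turns the global factor into \( b_{j,k}^{|\gamma|} \), up to the \( b_{0,k} \) powers absorbed into the normalisation. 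What remains inside each component is precisely \( \phi(P', \hat\theta') \). Finally I sum over \( \hat\theta \in \Omega_1(B_N, \mathbb{Z}_k) \) and \( P \).

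The main obstacle is the bookkeeping of connectivity. The frozen factors are nontrivial exactly on the plaquettes in \( \support d\hat\theta \), which need not lie in \( P \). So the definition of connectedness of \( (P, \hat\theta) \) must treat plaquettes in \( P \cup (\support d\hat\theta)^+ \) as vertices; otherwise the components would not capture all the \( \hat\theta \)-dependence, and \( \hat\theta \) would not decompose as a sum of component-supported pieces. I would first fix this by taking components of \( \mathcal{G}\bigl(P \cup (\support d\hat\theta)^+\bigr) \) and assigning to each component the restriction of \( \hat\theta \) to the edges it touches. I would then check that \( \hat\theta \) outside all components is forced to be closed, hence a gauge-trivial contribution. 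If closed off-component \( \hat\theta \) spoils uniqueness, I would absorb it into the normalising constant \( k^{-|C_1(B_N)|} \). I would also track the powers of \( b_{0,k} \) carefully, since the stated prefactor omits \( b_{0,k}^{|C_1(B_N)^+|} \), presumably because it is hidden in the normalisation.
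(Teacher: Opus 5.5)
Your route is the paper's: split \(\sigma=\theta+\hat\theta\), extract \(e^{2\beta}e^{2\beta(\Re\rho(\hat\theta(\partial p))-1)}\) from each plaquette, expand the remaining product over \(P\), and factor the \(\theta\)-integral over components. You are also right that connectedness must be defined via \(P\cup(\support d\hat\theta)^+\); the \(\cap\) in the paper's definition is a misprint, cf.\ \(|(P,\theta')|=|P\cup(\support\theta')^+|\) in Lemma~\ref{lemma: upper bound of expansion}. However, two steps do not go through as written, and both sit exactly where the statement itself is defective.

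\textbf{The \(\mathbb{Z}_k\)-part of the observable.} You never track the factor \(\rho(\hat\theta(\gamma))^j\) in \(\rho(\sigma(\gamma))^j=\rho(\theta(\gamma))^j\rho(\hat\theta(\gamma))^j\). For fixed \(\hat\theta\), an edge \(e\in\gamma\) touched by no component contributes \(\rho(\hat\theta(e))^j\,b_{j,k}/b_{0,k}\), not \(b_{j,k}/b_{0,k}\). The two agree for all \(\hat\theta\) only when \(k\mid j\) or \(j=0\).

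For \(k\nmid j\) the formula you are proving is in fact false. Take \(k=2\), \(j=1\), \(\beta=0\). Then \(Z_{1,2}[\gamma]=0\), since each edge of \(\gamma\) contributes the factor \(\int\rho(\sigma(e))e^{2\kappa\Re\rho(\sigma(e))^2}\,d\mu(\sigma)=0\). On the right-hand side only \(P=\emptyset\) survives and all frozen factors equal \(1\). Every summand is then positive, because \(b_{1,2}=\frac1\pi\int_{-\pi/2}^{\pi/2}\cos t\,e^{2\kappa\cos 2t}\,dt>0\). With the factor restored, the right-hand side is a multiple of \(\sum_{\hat\theta}\rho(\hat\theta(\gamma))=0\), as it should be.

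So you have two options:
\begin{itemize}
\item assume \(k\mid j\), which is the only case in which the expansion is used (Proposition~\ref{proposition: higgs regime}); or
\item carry \(\rho(\hat\theta(e))^j\) explicitly. Inside \(\phi\) for touched edges, do as the paper's \(\hat\phi\) does; there the product should run over \(e\in\gamma\), not over all of \(C_1(B_N)^+\). For untouched edges, include it as a separate factor.
\end{itemize}
The paper's proof drops this factor for untouched edges at the same point you do.

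\textbf{The constants.} Nothing can absorb the missing \(b_{0,k}^{|C_1(B_N)^+|}\), because \(\mu\) and the normalised edge measure inside \(\phi\) are fixed probability measures. Carried out carefully, your computation gives the prefactor
\[
e^{\beta|C_2(B_N)|}\,k^{-|C_1(B_N)^+|}\,b_{0,k}^{|C_1(B_N)^+|}\,\bigl(b_{j,k}/b_{0,k}\bigr)^{|\gamma|}.
\]
This is what the paper's intermediate line containing \(\prod_{e\in\gamma\colon e\nsim(P,\hat\theta)}(b_{j,k}/b_{0,k})\) yields, and it reduces to Proposition~\ref{proposition: polymer expansion} when \(k=1\). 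The exponent is \(-|C_1(B_N)^+|\) rather than \(-|C_1(B_N)|\), since a \(\mathbb{Z}_k\)-valued \(1\)-form is determined by its values on positively oriented edges. Also read \(b_{i,k}\) with \(\Re\) in the exponent. The stated prefactor \(e^{\beta|C_2(B_N)|}k^{-|C_1(B_N)|}b_{j,k}^{|\gamma|}\) is a misprint. You should prove the corrected identity rather than appeal to normalisation.

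Finally, your worry about closed \(\hat\theta\) off the components and about uniqueness is unnecessary. The right-hand side sums over all pairs \((P,\hat\theta)\), so the identity holds term by term. For fixed \((P,\hat\theta)\) the \(\theta\)-integral factors over components without any decomposition of \(\hat\theta\), and off the components \(\hat\theta\) enters only through the Wilson factor discussed above. Absorbing it into \(k^{-|C_1(B_N)|}\) would not be legitimate anyway, since the number of completions of given component data depends on the components. That non-factorisation matters only later, when one wants a genuine polymer gas for the cluster expansion.
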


\begin{remark}
	When \( k = 1, \) the expansion in Proposition~\ref{proposition: expansion 2} is identical to the expansion in Proposition~\ref{proposition: polymer expansion}.
\end{remark}

\begin{remark}
	One verifies that
	\begin{align*}
		b_{0,k} = \frac{1}{2\pi/k}\int_{-\pi/k}^{\pi/k}  e^{2\kappa \Re e^{ik\theta}} \, d\theta = [t = k\theta,\, dt = kd\theta] = \frac{1}{2\pi} \int_{-\pi}^{\pi}  e^{2\kappa \Re e^{it}} \, dt =  I_0(2\kappa)
	\end{align*}
	and 
	\begin{align*}
		b_{k,k} = \frac{1}{2\pi/k}\int_{-\pi/k}^{\pi/k} e^{ik\theta} e^{2\kappa \Re e^{ik\theta}} \, d\mu(\theta) = [t=k\theta] = \frac{1}{2\pi} \int_{-\pi}^{\pi} e^{it} e^{2\kappa \Re e^{it}} \, d\mu(t) = I_1(2\kappa),
	\end{align*}
	where for \( i \geq 0, \) \( z \mapsto I_i(z)\) is the modified Bessel function of the first kind with index \( i . \)
\end{remark}

\begin{proof}[Proof of Proposition~\ref{proposition: expansion 2}]
	Note first that by~\eqref{eq: new notation for angles}, we have 
	\begin{equation*} \begin{split}
		 &Z_{j,k}[\gamma] = \int \rho(\theta(\gamma))^j \rho(\theta'(\gamma))^j   e^{\beta \sum_{p\in C_2(B_N)} \rho(\theta(\partial p)) \rho(\theta'(\partial p)) + \kappa \sum_{e  \in C_1(B_N)} \rho(\theta)^k} \, d\mu(\theta, \theta').
		\end{split}
	\end{equation*}
	We now rewrite the terms of this expression as follows. 
	First, write
	\begin{equation}\label{eq: first step}
		e^{\beta \sum_{p\in C_2(B_N)} \rho(\theta(\partial p)) \rho(\theta'(\partial p))}
		=
		e^{\beta \sum_{p\in C_2(B_N)} \rho(\theta'(\partial p)) }
		e^{\beta \sum_{p\in C_2(B_N)} \rho(\theta'(\partial p))(\rho(\theta(\partial p)) -1)} .
	\end{equation}
	For the first term in the right-hand side of~\eqref{eq: first step}, we have 
	\begin{align*}
		&e^{\beta \sum_{p\in C_2(B_N)} \rho(\theta'(\partial p)) }
		=
		\prod_{p\in C_2(B_N)^+} e^{2\beta    \Re \rho(\theta'(\partial p)) }
		=
		e^{\beta|C_2(B_N)|}
		\prod_{p\in (\support \hat \theta)^+}  e^{2\beta  ( \Re \rho(\theta'(\partial p))-1) },
	\end{align*} 
	and for the second term on the right-hand side of~\eqref{eq: first step}, we have
	\begin{align*}
		&e^{\beta \sum_{p\in C_2(B_N)} \rho(\theta'(\partial p))(\rho(\theta(\partial p)) -1)} 
		=
		e^{2\beta \sum_{p\in C_2(B_N)^+} \Re \rho(\theta'(\partial p))(\rho(\theta(\partial p)) -1)} 
		\\&\qquad=
		\prod_{p \in C_2(B_N)^+} e^{2\beta  \Re \rho(\theta'(\partial p))(\rho(\theta(\partial p)) -1)} 
		=
		\prod_{p \in C_2(B_N)^+} \bigl( e^{2\beta  \Re \rho(\theta'(\partial p))(\rho(\theta(\partial p)) -1)} -1+1 \bigr)
		\\&\qquad=
		\sum_{P \subseteq C_2(B_N)^+} \prod_{p \in P} \bigl( e^{2\beta  \Re \rho(\theta'(\partial p))(\rho(\theta(\partial p)) -1)} -1 \bigr).
	\end{align*}
	Combining the above expressions, we obtain
	\begin{align*}
		 Z_{j,k}[\gamma] 
		 &= e^{\beta|C_2(B_N)|}
		 \sum_{P \subseteq C_2(B_N)^+}
		 \int 
		 \rho(\theta(\gamma))^j \rho(\theta'(\gamma))^j  
		  \prod_{p \in P} \bigl( e^{2\beta  \Re \rho(\theta'(\partial p))(\rho(\theta(\partial p)) -1)} -1 \bigr) 
		 \\&\qquad\qquad\qquad\qquad\qquad\qquad\cdot  \prod_{p\in (\support \hat \theta)^+}  e^{2\beta  ( \Re \rho(\theta'(\partial p))-1) }
		 e^{\kappa \sum_{e  \in C_1(B_N)} e^{ik\theta[e]} } \, d\mu( \theta,\theta')
		 \\
		 \\&= e^{\beta|C_2(B_N)|} k^{-|C_1(B_N)^+|}
		 \sum_{\substack{P \subseteq C_2(B_N)^+\\ \theta' \in \Omega_1(B_N,\mathbb{Z}_k)}}
		 \int 
		 \rho(\theta(\gamma))^j \rho(\theta'(\gamma))^j  
		  \prod_{p \in P} \bigl( e^{2\beta  \Re \rho(\theta'(\partial p))(\rho(\theta(\partial p)) -1)} -1 \bigr) 
		 \\&\qquad\qquad\qquad\qquad\qquad\qquad\qquad\qquad\cdot  \prod_{p\in (\support \hat \theta)^+}  e^{2\beta  ( \Re \rho(\theta'(\partial p))-1) }
		 e^{\kappa \sum_{e  \in C_1(B_N)} e^{ik\theta[e]} } \, d\mu(\theta).
	\\&=
		 e^{\beta|C_2(B_N)|} k^{-|C_1(B_N)^+|} b_{0,k}^{|C_1(B_N)^+|}
		 \sum_{\substack{P \subseteq C_2(B_N)^+\\ \theta' \in \Omega_1(B_N,\mathbb{Z}_k)}}
		 \prod_{e \in \gamma \colon e \nsim (P,\hat \theta)} (b_{j,k}/b_{0,k})
		 \prod_{(\hat P,\hat \theta') \in \mathcal{P}_{P, \theta'}}
		 \hat \phi(\hat P,\hat \theta')
		 \\&\qquad=
		 e^{\beta|C_2(B_N)|} k^{-|C_1(B_N)|}
		 b_{0,k}^{|C_1(B_N)^+|}
		 b_{j,k}^{|\gamma|}
		  \sum_{\substack{P \subseteq C_2(B_N)^+\\ \theta' \in \Omega_1(B_N,\mathbb{Z}_k)}}
		 \prod_{e \in \gamma \colon e \sim (P, \theta')} (b_{0,k}/b_{j,k})
		 \prod_{(\hat  P,\hat \theta') \in \mathcal{P}_{P,\hat \theta}}
		 \hat \phi(\hat P,\hat \theta'),
	\end{align*}
	where
	\begin{align*}
		\hat \phi(\hat P,\hat \theta') \coloneqq &
		\prod_{p\in (\support \hat \theta')^+}e^{2\beta (\Re \rho(\hat \theta'(\partial p))-1)}
		\\&\qquad\cdot \int \prod_{e \sim (\hat P,\hat \theta')} \rho(\hat \theta'(e))^j\rho(\theta(e))^j  
		 \prod_{p \in (\hat  P,\hat \theta')} \bigl(e^{2\beta \Re  \rho(\hat \theta'(\partial p) ) ( \rho(\theta(\partial p)) -1)}-1\bigr) 
		 \frac{e^{\kappa \sum_{e  \in C_1(B_N)} \rho(\theta(e))^k  } }{b_{0,k}^{|C_1(B_N)^+|}}\, d\mu(\theta).
	\end{align*}  
	From this, the desired conclusion immediately follows.
\end{proof}

The next result, Lemma~\ref{lemma: upper bound of expansion} below, gives an upper bound of \( \bigl| \phi_\gamma(P, \theta') \bigr| \) which is needed for the polymer expansion described in Proposition~\ref{proposition: expansion 2}, as such an upper bound guarantees that at least for some parameter values, the clusters will typically be small.
 
\begin{lemma}\label{lemma: upper bound of expansion}
	Let \( \beta,\kappa \geq 0,\) let \( k \geq 2, \) let \( \gamma \) be a path, and let \( p_0 \in C_0(B_N)^+.\) 
	Then there is \( a_m \geq 1 \) such that for every \( P \subseteq C_2(B_N)^+ \) and \( \theta' \in \Omega_1(B_N,\mathbb{Z}_k) \)  such that \( (P,\theta') \) is connected, we have
	\begin{align*}
		\bigl| \phi_\gamma(P, \theta') \bigr|
		&\leq
		(b_{0,k}/b_{j,k})^{|\{ e \in \gamma \colon e \sim (P, \theta') \}|} \prod_{p\in (\support  \theta')^+}e^{2\beta (\Re \rho(\theta'(\partial p)) -1)}
		\\&\qquad\qquad\cdot  
		 \prod_{p \in (P,\theta')}
		 \biggl(
		\int 
		 \bigl| e^{2\beta \Re e^{i \hat \theta'[\partial p]}(e^{ i \theta[\partial p])}-1)}-1\bigr|^{a_m} \prod_{e \in C_1(B_N)^+}\frac{e^{2\kappa \Re  \rho(\theta(e))  }}{
		b_{0,k}}
		\, d\mu(\theta)
		 \biggr)^{\mathrlap{1/a_m}}.
	\end{align*}
	In particular, if we let \( |(P,\theta')| \coloneqq |P \cup (\support \theta')^+| \), then for any \( \kappa > 0 \) we have
	\begin{equation*}
		\sup_{(P,\theta')} \bigl| \phi_\gamma(P, \theta') \bigr|^{1/|(P, \theta')|} \leq C_{\beta,\kappa} \searrow 0 \quad \text{as } \beta \to 0.
	\end{equation*}
	and for any \( \beta > 0 \) we have
	\begin{equation*}
		\sup_{(P,\theta')} \bigl| \phi_\gamma(P, \theta') \bigr|^{1/|(P, \theta')|} \leq C_{\beta,\kappa}' \searrow 0 \quad \text{as } \kappa \to \infty.
	\end{equation*}
\end{lemma}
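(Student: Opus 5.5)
The plan is to copy the proof of Lemma~\ref{lemma: upper Holder bound 1}, with the $\mathbb{Z}_k$-part $\theta'$ treated as fixed. Once $(P,\theta')$ is fixed, the prefactor $(b_{0,k}/b_{j,k})^{|\{e\in\gamma\colon e\sim(P,\theta')\}|}$ and the product over $p\in(\support\theta')^+$ of $e^{2\beta(\Re\rho(\theta'(\partial p))-1)}$ are deterministic. So we only need to bound the integral over $\theta$. First bound $|\prod_e \rho(\theta(e))^j|\le 1$. Then note that $\prod_{e\in C_1(B_N)^+} e^{2\kappa\Re\rho(\theta(e))^k}/b_{0,k}\,d\mu(\theta)$ is a product probability measure on $\Omega_1(B_N,U(1)_k)$, with independent edge angles. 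Under it, the $\theta$-integral is the expectation of $\prod_{p\in(P,\theta')}\bigl|e^{2\beta\Re\rho(\theta'(\partial p))(\rho(\theta(\partial p))-1)}-1\bigr|$.

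Next, choose a partition of the plaquettes into $|I|\le a_m$ classes $P_1,\dots,P_{|I|}$ such that within each class no two plaquettes share a boundary edge. Such a partition exists by a greedy colouring of the bounded-degree graph $\mathcal{G}$, with $a_m$ depending only on $m$. Apply H\"older's inequality with exponent $|I|$ across the classes. Within a class, the factors depend on disjoint sets of independent edge variables, so the expectation factorises into a product over $p$. Replacing the exponent $|I|$ by $a_m$ via Jensen/Lyapunov (each single-plaquette factor is a probability expectation) gives the displayed bound. Each single-plaquette integral only involves the four edges of $\partial p$, so the product over all other edges integrates to $1$.

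For the "in particular" statements, bound each factor separately. The factor $b_{0,k}/b_{j,k}$ is at most a constant $c_\kappa$ for $j$ divisible by $k$; if $k\nmid j$ I would argue separately, or with a constant in the spirit of Lemma~\ref{lemma: upper Holder bound 1}. Since each edge $e\sim(P,\theta')$ lies on some plaquette of $P\cup(\support\theta')^+$, the number of such $e$ is at most $4|(P,\theta')|$. For a plaquette in $P$, we have $|e^{x}-1|\le e^{|x|}|x|$ with $|x|\le 4\beta$, which gives the factor $\le e^{4\beta}\cdot 4\beta\to 0$ as $\beta\to 0$. As $\kappa\to\infty$, the single-edge law concentrates at $\theta(e)=0$, so $\rho(\theta(\partial p))-1\to 0$ in probability and by dominated convergence the $a_m$-th moment tends to $0$. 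For plaquettes in $(\support\theta')^+\setminus P$, the factor $e^{2\beta(\Re\rho(\theta'(\partial p))-1)}$ is at most $e^{-2\beta(1-\cos(2\pi/k))}<1$. This is uniformly small when $\beta$ is large, but not when $\beta\to 0$.

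That last point is the main obstacle. As $\beta\to0$, plaquettes with $d\theta'\ne 0$ but not in $P$ carry weight close to $1$. One needs the $k^{-|C_1|}$ normalisation, i.e.\ the entropy of $\theta'$, to be absorbed differently. The plan would be to fix a gauge for $\theta'$ (e.g.\ an axial gauge) or to count $\theta'$ up to gauge, so that the sum over $\theta'$ with given $\support d\theta'$ is controlled, and then to use $|e^{2\beta(\cdots)}-1|$ on those plaquettes as well. Similarly, as $\kappa\to\infty$ one checks that the $\theta'$-weight alone needs $\beta>0$ fixed, which is granted. Uniformity of the constant $C_{\beta,\kappa}$ in $|(P,\theta')|$ then follows by taking $|(P,\theta')|$-th roots.
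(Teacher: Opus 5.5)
Your argument for the displayed inequality is the paper's. You colour the plaquettes into \(a_m\) classes with pairwise edge-disjoint boundaries and apply H\"older's inequality with respect to the product probability measure \(\prod_{e}e^{2\kappa\Re\rho(\theta(e))^k}/b_{0,k}\,d\mu(\theta)\). You then factorise within each class; your Lyapunov step for \(|I|<a_m\) is a harmless addition.

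The paper's proof ends there and gives no argument for the two ``in particular'' claims. Your proposal leaves them open as well. This gap cannot be filled, because both claims are false for this expansion. Take \(P=\emptyset\), fix an edge \(e_1\) far from \(\gamma\), and let \(\theta'(e_1)=2\pi/k\), \(\theta'(-e_1)=-2\pi/k\), and \(\theta'=0\) elsewhere. The vortex set \((\support d\theta')^+\) is then the set of plaquettes containing \(e_1\). This set is connected, and no edge of \(\gamma\) is adjacent to it. The \(\theta\)-integral in \(\phi_\gamma(\emptyset,\theta')\) is therefore the total mass of a probability measure, so
\[
\bigl|\phi_\gamma(\emptyset,\theta')\bigr|^{1/|(\emptyset,\theta')|}=e^{-2\beta(1-\cos(2\pi/k))}.
\]
This tends to \(1\) as \(\beta\to0\) and does not depend on \(\kappa\). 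So \(C_{\beta,\kappa}\searrow0\) as \(\beta\to0\) is impossible, and so is \(C'_{\beta,\kappa}\searrow0\) as \(\kappa\to\infty\) at fixed \(\beta\).

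Two of your remarks need correcting as a result.
\begin{itemize}
\item Your claim that the \(\kappa\to\infty\) case is fine because \(\beta>0\) is fixed is wrong. Vortex plaquettes contribute the \(\kappa\)-independent constant \(e^{-2\beta(1-\cos(2\pi/k))}<1\), and a fixed constant below \(1\) does not tend to \(0\).
\item Gauge fixing, or counting \(\theta'\) up to gauge, cannot rescue the small-\(\beta\) claim. The lemma bounds the activity of one fixed polymer, not an entropy-weighted sum over \(\theta'\). Making vortices harmless at small \(\beta\) requires a genuinely different treatment of the \(\mathbb{Z}_k\) variables, e.g.\ a character (high-temperature) expansion of the vortex weights. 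That changes both the polymers and \(\phi\).
\end{itemize}

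What your estimates do prove, for \(k\mid j\) (using \(0<b_{j,k}\leq b_{0,k}\) and at most four adjacent edges per plaquette), is
\[
\sup_{(P,\theta')}\bigl|\phi_\gamma(P,\theta')\bigr|^{1/|(P,\theta')|}\leq (b_{0,k}/b_{j,k})^{4}\max\bigl\{A_{\beta,\kappa},\,e^{-2\beta(1-\cos(2\pi/k))}\bigr\}.
\]
Here \(A_{\beta,\kappa}\) is the largest single-plaquette H\"older factor; it satisfies \(A_{\beta,\kappa}\leq 4\beta e^{4\beta}\) and \(A_{\beta,\kappa}\to0\) as \(\kappa\to\infty\). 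The right-hand side is small when \(\beta\) is large and \(\kappa\geq\kappa_0(\beta)\). It is not small in the two regimes asserted in the lemma.
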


\begin{proof}
	Let \( P \subseteq C_2(B_N)^+ \) and \( \theta' \in \Omega_1(B_N,\mathbb{Z}_k) \). Then 
	\begin{align*}
		\bigl| \phi(P, \theta') \bigr| 
		&\leq
		(b_{0,k}/b_{j,k})^{|\{ e \in \gamma \colon e \sim (P, \theta') \}|} \prod_{p\in (\support  \theta')^+}e^{2\beta (\Re \rho(\theta'(\partial p)) -1)}
		\\&\qquad\qquad\cdot 
		\prod_{p \in  (P, \theta')}
		 \biggl(
		\int 
		 \bigl| e^{2\beta \Re e^{i \hat \theta'[\partial p]}(e^{ i \theta[\partial p])}-1)}-1\bigr|^{a_m} \prod_{e \in C_1(B_N)^+}\frac{e^{2\kappa \Re  \rho(\theta(e))  }}{
		b_{0,k}}
		\, d\mu(\theta)
		 \biggr)^{|(P,\theta')/a_m}.
	\end{align*}
	Next, note that there is an absolute constant \( a_m \) (that depends only on the dimension of the lattice \( \mathbb{Z}^m \)) such that \( C_2(B_N)^+ \) can be partitioned into disjoint sets \( P_1, \dots, P_{a_m} \) such that for each set, no two plaquettes have a common edge in their boundary.
		Hence, by H\"olders inequality, applied with the probability measure \( \prod_{e \in C_1(B_N)^+}\frac{e^{2\kappa \Re  \rho(\theta(e))  }}{
		b_{0,k}} \, d\mu(\theta) \), we have 
	\begin{align*}
		&
		\int 
		 \prod_{p \in (P, \theta')} \bigl| e^{2\beta \Re \rho(\theta'(\partial p)) (\rho(\theta(\partial p)) -1)}-1\bigr| \prod_{e \in C_1(B_N)^+}\frac{e^{2\kappa \Re  \rho(\theta(e))  }}{
		b_{0,k}} \, d\mu(\theta) 
		 \\&\qquad\leq
		 \prod_{i=1}^{a_m}
		 \biggl(
		\int 
		 \prod_{p \in P_i \cap (P, \theta')} \bigl| e^{2\beta \Re e^{i \hat \theta'[\partial p]}(e^{ i \theta[\partial p])}-1)}-1\bigr|^{a_m} \prod_{e \in C_1(B_N)^+}\frac{e^{2\kappa \Re  \rho(\theta(e))  }}{
		b_{0,k}}
		\, d\mu(\theta)
		 \biggr)^{1/a_m}
		 \\&\qquad=
		 \prod_{i=1}^{a_m}
		 \prod_{p \in P_i \cap (P, \theta')} 
		 \biggl(
		\int 
		 \bigl| e^{2\beta \Re e^{i \hat \theta'[\partial p]}(e^{ i \theta[\partial p])}-1)}-1\bigr|^{a_m} \prod_{e \in C_1(B_N)^+}\frac{e^{2\kappa \Re  \rho(\theta(e))  }}{
		b_{0,k}}
		\, d\mu(\theta)
		 \biggr)^{1/a_m}
		 \\&\qquad=
		 \prod_{p \in  (P, \theta')}
		 \biggl(
		\int 
		 \bigl| e^{2\beta \Re e^{i \hat \theta'[\partial p]}(e^{ i \theta[\partial p])}-1)}-1\bigr|^{a_m} \prod_{e \in C_1(B_N)^+}\frac{e^{2\kappa \Re  \rho(\theta(e))  }}{
		b_{0,k}}
		\, d\mu(\theta)
		 \biggr)^{1/a_m}.
	\end{align*}
	This concludes the proof.  
\end{proof}
 
\clearpage

\section{Proof of main results}\label{section: proof of main result}

In this section, we give proofs of our main results, Theorem~\ref{theorem: main result charge nondiv} and Theorem~\ref{theorem: main result charge div}, of which Theorem~\ref{theorem: U1 Wilson line ratios MF} follows as a special case. The proofs of these theorems will be divided into several propositions throughout this section, which are finally summarized in Section~\ref{sec: summary}.

\subsection{Upper and lower bounds of Wilson~lines and Wilson~loops}

In this section, we obtain upper and lower bounds of Wilson~lines and loops, which together give a basic understanding of how the abelian lattice Higgs model behaves for different charges. The main tools in this section are the current expansion from Section~\ref{sec: current expansion} and the Griffiths-Hurst-Sherman inequality. 

The first result of this section is Lemma~\ref{lemma: positivity} below, which shows that for any path \( \gamma\), \( W_{j\gamma} \) has positive expected value if any only if the charge \( k \) satisfies \( k \mid j. \)

\begin{lemma}\label{lemma: positivity}
	Let \( \beta,\kappa >0, \) let  \( \gamma \) be an open path,  let \( j \geq 1 \), and let \( k \geq 0. \)
	Then \begin{equation*}
		\mathbb{E}_{N,\beta,\kappa,k}[W_{j\gamma}] \geq 0.
	\end{equation*}
	Moreover,
	\begin{equation*}
		\mathbb{E}_{N,\beta,\kappa,k}[W_{j\gamma}] > 0
	\end{equation*}
	if and only if \( k \geq 1 \) and \( k  \mid j. \)
\end{lemma}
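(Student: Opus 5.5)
The natural route is the current expansion of Proposition~\ref{proposition: current for Higgs with charge}, applied to the path \( j\gamma \), i.e. to the \( 1 \)-chain with coefficient \( j\gamma[e] \) on each edge. The proof of Proposition~\ref{proposition: general currents} goes through verbatim for any integer \( 1 \)-chain, not just for \( \{-1,0,1\} \)-valued ones. It gives
\[
\mathbb{E}_{N,\beta,\kappa,k}[W_{j\gamma}] = \frac{\sum_{\mathbf{n} \in \mathcal{C}_{j\gamma,k}} w_{\beta,\kappa}(\mathbf{n})}{\sum_{\mathbf{n} \in \mathcal{C}_{0,k}} w_{\beta,\kappa}(\mathbf{n})}.
\]
Since \( \beta,\kappa>0 \), every weight \( w_{\beta,\kappa}(\mathbf{n}) \) is strictly positive. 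The denominator is positive, since the zero current lies in \( \mathcal{C}_{0,k} \). This immediately gives the first claim \( \mathbb{E}[W_{j\gamma}] \geq 0 \). It also shows that the expectation is strictly positive if and only if \( \mathcal{C}_{j\gamma,k} \neq \emptyset \). Because we are in finite volume, all sums converge absolutely, so the interchange of sum and integral is harmless.

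The remaining task is to characterize when the constraint set \( \mathcal{C}_{j\gamma,k} \) is nonempty. Write \( \mathbf{q} \coloneqq \sum_{p} (\mathbf{n}[p]-\mathbf{n}[-p])\, p \in C^2(B_N,\mathbb{Z}) \) and \( \mathbf{r}[e] \coloneqq \mathbf{n}[e]-\mathbf{n}[-e] \). The defining condition then reads
\[
\partial \mathbf{q} + k\,\mathbf{r} + j\gamma = 0
\]
as integer \( 1 \)-chains, up to the sign convention for \( \mathbf{n}[\hat\partial e] \). Conversely, any integer \( 2 \)-chain \( \mathbf{q} \) and integer \( 1 \)-chain \( \mathbf{r} \) satisfying this identity come from some \( \mathbf{n} \): put the positive parts on \( p \), \( e \) and the negative parts on \( -p \), \( -e \).

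For \( k\mid j \) with \( k \geq 1 \), take \( \mathbf{q}=0 \) and \( \mathbf{r} = -(j/k)\gamma \), so \( \mathcal{C}_{j\gamma,k}\neq\emptyset \).

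For the converse, apply \( \partial \) to the identity and use \( \partial\partial = 0 \). This gives \( k\,\partial\mathbf{r} = -j\,\partial\gamma \). Since \( \gamma \) is an open path, \( \partial\gamma \) has coefficient \( \pm1 \) at its endpoints. Reading off a coefficient at an endpoint gives \( j \in k\mathbb{Z} \).
\begin{itemize}[label={}]
\item If \( k=0 \), this says \( j\,\partial\gamma = 0 \), which is impossible since \( j\geq1 \).
\item If \( k\geq1 \), it gives \( k\mid j \).
\end{itemize}
Hence \( \mathcal{C}_{j\gamma,k}=\emptyset \) whenever \( k=0 \) or \( k\nmid j \), and the expectation vanishes exactly in that case.

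The main obstacle is bookkeeping rather than an idea. One has to check that the sign convention in \( \mathbf{n}[\hat\partial e]-\mathbf{n}[-\hat\partial e] \) really equals \( \pm(\partial\mathbf{q})[e] \). One also has to confirm that the expansion is valid with the multiply-charged observable \( j\gamma \), and that nothing goes wrong for boxes \( B_N \) too small to contain the path; in that case one simply takes \( N \) large enough to contain \( \gamma \). The \( \partial\partial=0 \) step is the essential point, and it is exactly where openness of \( \gamma \) is used.
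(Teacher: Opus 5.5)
Your proposal is correct and follows the paper's proof essentially step by step. Positivity of the current weights reduces both claims to whether \(\mathcal{C}_{j\gamma,k}\) is nonempty; applying \(\partial\) (with \(\partial\partial=0\)) to \(\partial\mathbf{q}+k\mathbf{r}+j\gamma=0\) at an endpoint of the open path forces \(k\geq 1\) and \(k\mid j\); and the explicit current with no plaquette part and \(\mathbf{r}=-(j/k)\gamma\) gives the converse, exactly as in the paper.
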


\begin{proof}
	The first claim of the lemma is a direct consequence of the Griffiths-Hurst-Sherman inequality, or alternatively, of Proposition~\ref{proposition: current for Higgs with charge}. For the second claim of the lemma, note that by~Proposition~\ref{proposition: current for Higgs with charge}, we have \( \mathbb{E}_{N,\beta,\kappa,k}[W_{j\gamma}] > 0 \) if any only if \( \mathcal{C}_{j\gamma ,k}\) is non-empty, i.e., if there is~\( \mathbf{n}\in \mathcal{C} \) such that
	\begin{equation}\label{eq: C gamma k condition}
		 (\mathbf{n}[\hat \partial e]-\mathbf{n}[-\hat \partial e]) + k(\mathbf{n}[e]-\mathbf{n}[-e]) +  j\gamma[e]= 0,  \quad \forall e\in C_1(B_N).  
	\end{equation}

	Now assume that \( \mathbf{n} \in \mathcal{C}\) satisfies~\eqref{eq: C gamma k condition}.
	 For \( e \in C_1(B_N), \) define \( \sigma(e) \coloneqq \mathbf{n}[e]-\mathbf{n}[-e]\), and for \( p \in C_2(B_N), \) define \( \omega (p) \coloneqq \mathbf{n}[p] - \mathbf{n}[-p]. \) Then \(  \sigma \in \Omega_1(B_N,\mathbb{Z}) \) and \(  \omega \in \Omega_2(B_N,\mathbb{Z}) \). Since~\( \mathbf{n} \)~satisfies~\eqref{eq: C gamma k condition}, with this notation, we have 
	 \begin{align*}
	 	\delta \omega  + k \sigma    +  j\gamma= 0 	 ,
	 \end{align*}
	 implying in particular that 
	 \begin{align*}
	 	k {\delta \sigma} + j \partial \gamma= 0.
	 \end{align*}
	 Since \( \gamma \) is an open path, \( j\partial \gamma \) is a non-trivial \( 0\)-form, taking values in \( \{ -j,0,j\},\) and hence we must have \( k \geq 1 \) and  \( k \mid j. \)

	 We now show that \( k \geq 1 \) and  \( k \mid j. \), then  \( \mathcal{C}_{j\gamma,k} \) is non-empty. To this end, assume that \( k \geq 1 \) and \( k \mid j. \) Define \( \mathbf{n} \in \mathcal{C}\) by letting \( \mathbf{n}[p] = 0 \) for all \( p \in C_2(B_N), \) and for \( e \in C_1(B_N), \) letting
	 \begin{equation*}
	 	\mathbf{n}[e] = \begin{cases}
	 		 (j/k) \gamma[-e] &\text{if }\gamma[-e] \geq 0 \cr 
	 		0 &\text{otherwise.}
	 	\end{cases}
	 \end{equation*}
	 Then, by definition, for any \(  e\in C_1(B_N) \) we have \( (\mathbf{n}[\hat \partial e]-\mathbf{n}[-\hat \partial e]) = 0 \) and 
	 \begin{equation*}
	 	k(\mathbf{n}[e]-\mathbf{n}[-e]) = -j\gamma[e],
	 \end{equation*}
	 and hence
	 \begin{equation*} 
		 (\mathbf{n}[\hat \partial e]-\mathbf{n}[-\hat \partial e]) + k(\mathbf{n}[e]-\mathbf{n}[-e]) +  j\gamma[e]
		 =  
		 0,
	\end{equation*}
	implying in particular that \( \mathbf{n} \in \mathcal{C}_{j\gamma,k}. \) This completes the proof.
\end{proof}

The following result, Proposition~\ref{proposition: always perimeter law}, shows that if \( k \mid j, \) then Wilson~loop observables always obey a perimeter law.

\begin{proposition}\label{proposition: always perimeter law}
	Let \( \beta,\kappa >0, \) let \( \gamma \) be a path, and let \( j ,k \geq 1 \) be such that \( k \mid j. \) Then
	\begin{align*}
		\langle W_{j\gamma} \rangle_{\beta,\kappa,k } > \langle W_{e_0} \rangle_{\beta,\kappa,k }^{j|\gamma|} >0.
	\end{align*}
	Hence, if \( \gamma \) is a loop, the Wilson~loop observable \( W_{j \gamma} \) has perimeter law for all \( \beta,\kappa >0. \)
\end{proposition}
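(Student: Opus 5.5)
The plan is to combine the Ginibre (Griffiths--Hurst--Sherman-type) correlation inequalities, which hold for the general ferromagnetic model~\eqref{eq: general model}, with the explicit computation at \( \beta = 0 \). The key observation is that since \( k \mid j \), the observable factors as
\begin{equation*}
	W_{j\gamma}(\sigma) = \prod_{e \in \gamma} \rho(\sigma(e))^{j} = \prod_{e \in \gamma} \bigl( \rho(\sigma(e))^{k} \bigr)^{j/k},
\end{equation*}
so it is a product of \( j|\gamma|/k \) copies of the one-edge observables \( W_{ke} = \rho(\sigma(e))^k \). These are exactly the ``charge-\(k\)'' edge interaction terms in the Hamiltonian~\eqref{eq: charged hamiltonian}. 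I will therefore read \( \langle W_{e_0}\rangle \) in the statement as \( \langle W_{ke_0}\rangle \), with exponent \( j|\gamma|/k \). For \( k \geq 2 \), Lemma~\ref{lemma: positivity} shows that \( \langle W_{e_0} \rangle_{\beta,\kappa,k}=0 \), so this reading is forced.

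\textbf{Step 1 (reality).} By the symmetry \( \sigma \mapsto -\sigma \) of \( \mu_{N,\beta,\kappa,k} \), all expectations of characters are real. So \( \mathbb{E}_{N,\beta,\kappa,k}[\rho(\sigma(\xi))] = \mathbb{E}_{N,\beta,\kappa,k}[\cos \sigma(\xi)] \), and the Ginibre framework for cosine observables applies to the measure written as a ferromagnetic \( U(1) \) model with couplings \( \beta \) on \( \partial p \) and \( \kappa \) on \( k\cdot e \).

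\textbf{Step 2 (decoupling).} Apply the first Ginibre inequality repeatedly, \( \langle \rho(\sigma(\xi_1+\xi_2))\rangle \geq \langle \rho(\sigma(\xi_1))\rangle\langle\rho(\sigma(\xi_2))\rangle \). This gives
\begin{equation*}
	\mathbb{E}_{N,\beta,\kappa,k}[W_{j\gamma}] \geq \prod_{e\in\gamma} \mathbb{E}_{N,\beta,\kappa,k}[W_{ke}]^{j/k}.
\end{equation*}

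\textbf{Step 3 (monotonicity in \( \beta \)).} Use the second Ginibre inequality: each \( \mathbb{E}_{N,\beta,\kappa,k}[W_{ke}] \) is nondecreasing in \( \beta \). Hence
\begin{equation*}
	\mathbb{E}_{N,\beta,\kappa,k}[W_{ke}] \geq \mathbb{E}_{N,0,\kappa,k}[W_{ke}].
\end{equation*}
At \( \beta=0 \) the edges are independent. Substituting \( t=k\theta \) as in the remark after Proposition~\ref{proposition: expansion 2}, the right-hand side equals \( I_1(2\kappa)/I_0(2\kappa)>0 \) for every \( e \), including edges near \( \partial B_N \). This yields a lower bound that is uniform in \( N \) and in the position of \( \gamma \), namely \( (I_1(2\kappa)/I_0(2\kappa))^{j|\gamma|/k} \).

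\textbf{Step 4 (infinite volume).} Pass to the limit \( N\to\infty \), which exists by the Ginibre inequalities, and use translation invariance of \( \langle\cdot\rangle_{\beta,\kappa,k} \). Applying Steps 2--3 directly in the limit gives \( \langle W_{j\gamma}\rangle_{\beta,\kappa,k} \geq \langle W_{ke_0}\rangle_{\beta,\kappa,k}^{j|\gamma|/k} \geq (I_1(2\kappa)/I_0(2\kappa))^{j|\gamma|/k}>0 \). This is an exponential lower bound in \( |\gamma| \), i.e.\ a perimeter law whenever \( \gamma \) is a loop.

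The main obstacle is checking that the Ginibre inequalities genuinely apply. The couplings on \( k\cdot e \) involve a higher power of the same spin, so one must confirm that the model is of the form~\eqref{eq: general model} with positive coefficients on the symmetric set \( \Gamma \). This holds, and then Ginibre's theorem for \( U(1) \)-valued ferromagnets covers arbitrary integer chains \( \xi \). Strict inequality, as opposed to \( \geq \), is not essential. Strict positivity itself already follows from the explicit Bessel lower bound, or from Lemma~\ref{lemma: positivity}.
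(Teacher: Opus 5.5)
Your Step~2 is where the argument breaks. The inequality \( \langle \rho(\sigma(\xi_1+\xi_2))\rangle \geq \langle \rho(\sigma(\xi_1))\rangle\langle\rho(\sigma(\xi_2))\rangle \) is not one of Ginibre's inequalities. Write \( A=\sigma(\xi_1) \) and \( B=\sigma(\xi_2) \). Ginibre's second inequality compares \( \langle\cos A\cos B\rangle \) with \( \langle\cos A\rangle\langle\cos B\rangle \). Your left-hand side is instead \( \langle \cos A\cos B\rangle-\langle\sin A\sin B\rangle \), and nothing controls the \( \sin A\sin B \) term in your favour; for \( \xi_1=\xi_2 \) it equals \( \langle\sin^2 A\rangle>0 \).

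In the form you need it, splitting \( \rho(\sigma(e))^j \) into \( j/k \) copies of \( \rho(\sigma(e))^k \) on the same edge, the inequality is false. Take \( \gamma=e \) a single edge, \( j=2k \), and \( \beta=0 \), where the edges are independent. After substituting \( t=k\theta \),
\begin{equation*}
	\mathbb{E}_{N,0,\kappa,k}[W_{2ke}] = \frac{I_2(2\kappa)}{I_0(2\kappa)} < \Bigl(\frac{I_1(2\kappa)}{I_0(2\kappa)}\Bigr)^2 = \mathbb{E}_{N,0,\kappa,k}[W_{ke}]^2
\end{equation*}
for every \( \kappa>0 \), by the Tur\'an inequality \( I_1^2>I_0I_2 \). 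For small \( \kappa \) the two sides are about \( \kappa^2/2 \) and \( \kappa^2 \). By continuity in \( \beta \) at fixed \( N \), the failure persists for small \( \beta>0 \). More generally, log-concavity of \( n\mapsto I_n \) gives \( I_n/I_0<(I_1/I_0)^n \) for \( n\geq 2 \). So your final bound \( (I_1(2\kappa)/I_0(2\kappa))^{j|\gamma|/k} \) is false at \( \beta=0 \), and hence at fixed \( N \) for small \( \beta>0 \), whenever \( j\geq 2k \). You were right that the statement is defective for \( k\geq 2 \), but \( \langle W_{ke_0}\rangle^{j|\gamma|/k} \) is not the right repair. For \( k=1 \), \( j\geq 2 \) it is the literal statement, which fails for the same reason.

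The fix is to drop Step~2 and apply your Step~3 to \( W_{j\gamma} \) itself. Ginibre's second inequality makes \( \mathbb{E}_{N,\beta,\kappa,k}[W_{j\gamma}] \) nondecreasing in \( \beta \), and at \( \beta=0 \) the measure factorizes over edges, so
\begin{equation*}
	\mathbb{E}_{N,\beta,\kappa,k}[W_{j\gamma}] \geq \mathbb{E}_{N,0,\kappa,k}[W_{j\gamma}] = \Bigl(\frac{I_{j/k}(2\kappa)}{I_0(2\kappa)}\Bigr)^{|\gamma|}>0,
\end{equation*}
uniformly in \( N \) and in the position of \( \gamma \). This is the perimeter-law lower bound, with \( \langle W_{je_0}\rangle_{0,\kappa,k}^{|\gamma|} \) in place of the statement's \( \langle W_{e_0}\rangle^{j|\gamma|} \).

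The paper's proof also keeps charge \( j \) on every edge. It splits only across distinct edges, \( \mathbb{E}[W_{j\gamma}]\geq\prod_{e\in\gamma}\mathbb{E}[W_{je}] \), and takes positivity of each factor from Lemma~\ref{lemma: positivity}. That avoids the same-edge obstruction above. Even for disjoint supports, though, the product inequality is not a direct instance of Ginibre's inequalities, since the same \( \sin A\sin B \) term appears. The monotonicity-in-\( \beta \) argument is therefore the cleaner route, and its explicit Bessel value replaces the appeal to Lemma~\ref{lemma: positivity}.
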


\begin{proof}
	By the Griffiths-Hurst-Sherman inequality, we have 
	\begin{equation*}
		\mathbb{E}_{N,\beta,\kappa,k}[W_{j\gamma}] \geq \prod_{e \in \gamma} \mathbb{E}_{N,\beta,\kappa,k}[W_{je}]. 
	\end{equation*}
	Letting \( N \to \infty \) and using Lemma~\ref{lemma: positivity}, the desired conclusion immediately follows.
\end{proof}

Motivated by Proposition~\ref{proposition: always perimeter law}, we now turn to the case \( k \not \mid j, \) and show that in this case, the abelian lattice Higgs model has a phase transition between a region with area law and a region with perimeter law. To this end, we first prove that for any \(j \) and \( k \), there is always a region in the parameter space where the model has a perimeter law.

\begin{proposition}\label{proposition: always a region with perimeter law}
	Let \( \beta_c \) be so that pure lattice gauge theory with \( \beta> \beta_c \) has perimeter law.
	Let \( \beta > \beta_c  \) and \( \kappa > 0 \), let \( \gamma \) be a loop, and let \( j \geq 1 \) and \( k \geq 0. \) Then there is \( C,c > 0 \) such that \( \langle W_{j \gamma} \rangle_{\beta,\kappa,k} \geq Ce^{-c|\gamma|} \) for every \( n. \)
\end{proposition}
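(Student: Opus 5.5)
The plan is to compare the charged Higgs model with pure \( U(1) \) lattice gauge theory at the same \( \beta \), using the Griffiths--Hurst--Sherman (Ginibre) inequality in the framework of the general model~\eqref{eq: general model}. For \( \gamma \) a loop, \( W_{j\gamma} = \rho(\sigma(j\gamma)) \) is a product of the basic ferromagnetic observables, and the charged Hamiltonian~\eqref{eq: charged hamiltonian} is of the form~\eqref{eq: general model}. Its family \( \Gamma \) consists of the plaquette boundaries \( \partial p \) with coupling \( \beta \) and the chains \( k\cdot e \) with coupling \( \kappa \geq 0 \). Ginibre's inequality states that expectations of such observables are nondecreasing in every coupling \( \beta_\xi \). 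Lowering the couplings of all \( k\cdot e \) from \( \kappa \) to \( 0 \) therefore gives
\begin{equation*}
	\mathbb{E}_{N,\beta,\kappa,k}[W_{j\gamma}] \geq \mathbb{E}_{N,\beta,0,k}[W_{j\gamma}] = \mathbb{E}_{N,\beta}[W_{j\gamma}],
\end{equation*}
where the right-hand side is pure \( U(1) \) lattice gauge theory on \( B_N \). If one prefers to avoid quoting monotonicity, the same inequality follows from Proposition~\ref{proposition: current for Higgs with charge}. The monotonicity in \( \kappa \) can be read off from the derivative \( \partial_\kappa \langle W \rangle = \sum_e (\langle W \rho(k\sigma(e))\rangle - \langle W\rangle\langle \rho(k\sigma(e))\rangle) \geq 0 \), which is exactly GHS/Ginibre applied to the pair \( W_{j\gamma} \), \( \rho(\sigma(ke)) \).

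Next, let \( N \to \infty \). The left side converges to \( \langle W_{j\gamma}\rangle_{\beta,\kappa,k} \) and the right side to \( \langle W_{j\gamma}\rangle_{\beta} \), both limits existing by the Ginibre monotonicity in \( N \) cited in the introduction. This reduces the statement to a perimeter-law lower bound for the charge-\( j \) Wilson loop in pure \( U(1) \) theory when \( \beta > \beta_c \).

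For that last step I would interpret the hypothesis on \( \beta_c \) as providing \( \langle W_{\gamma}\rangle_\beta \geq C e^{-c|\gamma|} \) for loops. To pass to charge \( j \), I would try two routes in order. The first is to note that the known proofs of the deconfinement transition \citep{fs1982,g1980} apply verbatim to the representation \( \theta \mapsto e^{ij\theta} \), giving \( \langle W_{j\gamma}\rangle_\beta \geq C_j e^{-c_j|\gamma|} \) for \( \beta \) large. The second, more self-contained, is to use GHS to factor along edges, which would give something like \( \langle W_{j\gamma}\rangle_\beta \geq \prod_{e\in\gamma}\langle\cdot\rangle \). However, single-edge expectations vanish in pure gauge theory, so this factorization does not work directly.

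I therefore expect the main obstacle to be this charge-\( j \) step. Concretely, the task is to justify that "perimeter law for \( \beta>\beta_c \)" covers \( W_{j\gamma} \) as well as \( W_\gamma \). My plan is to build this into the definition of \( \beta_c \): take \( \beta_c \) to be the threshold above which pure gauge theory has perimeter law for the relevant charge \( j \) (this threshold is finite by the spin-wave/Fröhlich--Spencer results in \( m\geq 4 \)). With that reading, the constants \( C,c \) depend only on \( \beta \) and \( j \), not on \( \kappa \), \( k \), or \( \gamma \), which yields the proposition.
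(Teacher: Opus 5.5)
Your first step is the same as the paper's: Ginibre monotonicity in the couplings of the chains $k\cdot e$ lowers $\kappa$ to $0$ and reduces the claim to pure $U(1)$ theory at the same $\beta$. Your aside that this also follows from Proposition~\ref{proposition: current for Higgs with charge} is not immediate, because the numerator and the denominator of the current representation both increase in $\kappa$. The Ginibre derivative argument is the right justification.

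The two routes diverge at the charge-$j$ step. The paper writes $\mathbb{E}_{N,\beta}[W_{j\gamma}] \geq \mathbb{E}_{N,\beta}[W_{\gamma}]^j$, attributes it to GHS, and then needs only the charge-one perimeter law. That would make the charge-one threshold $\beta_c$ work for every $j$. You instead require a charge-$j$ perimeter law for pure gauge theory, either built into the definition of $\beta_c$ or imported from a Fr\"ohlich--Spencer-type spin-wave bound.

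Your caution is justified. With $A=\sigma(\gamma)$ and $B=(j-1)\sigma(\gamma)$, Ginibre's inequality only gives $\langle\cos(A+B)\rangle+\langle\cos(A-B)\rangle\geq 2\langle\cos A\rangle\langle\cos B\rangle$. The product bound itself fails in general. For a single plaquette at small $\beta$, the character expansion gives $\langle W_{2\partial p}\rangle\approx \beta^2/2$, while $\langle W_{\partial p}\rangle^2\approx\beta^2$; the one-variable analogue is the Tur\'an inequality $I_1^2>I_0I_2$. At large $\beta$, spin-wave heuristics suggest $\langle W_{j\gamma}\rangle\approx\langle W_\gamma\rangle^{j^2}$, which is much smaller than $\langle W_\gamma\rangle^{j}$.

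So the paper's route would give a $j$-independent threshold from the classical charge-one deconfinement result, if it were valid. Your route is sound, but it costs you one of two things. Either you read $\beta_c$ as a perimeter-law threshold for charge $j$, which may depend on $j$, or you rely on a genuinely external charge-$j$ perimeter-law theorem.
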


\begin{proof}
	By the Griffiths-Hurst-Sherman inequality, we have
	\begin{equation*}
		\mathbb{E}_{N,\beta,\kappa,k}[W_{j,\gamma}] \geq \mathbb{E}_{N,\beta,0,k}[W_{j,\gamma}] = \mathbb{E}_{N,\beta}[W_{j,\gamma}] \geq \mathbb{E}_{N,\beta}[W_{\gamma}]^j.
	\end{equation*}
	Since pure lattice gauge theory has perimeter law for all sufficiently large \( \beta \) (see, e.g., \citep{fv2025b}), the desired conclusion follows.
\end{proof}

We next show that if \( k \not \mid j,\) then there exists a region in the parameter space where the abelian lattice Higgs model exhibits an area law.

\begin{proposition}\label{proposition: area law}
	Let \( j \geq 1 \) and \( k \geq 0 \), let \( \gamma \) be a rectangular loop, and let \( j,k \geq 1 \) be such that \( k \not \mid j.\) Then there is \( \beta_c '> 0 \), \( C = C(\beta,\kappa)>0, \) and \( c=c(\beta,\kappa) > 0 \) such that if \( \beta \in (0,\beta_c') \) and \( \kappa \geq 0 \) then   \( \mathbb{E}_{N,\beta,\kappa,k}[W_{j\gamma}] \leq Ce^{-c \area(\gamma)}.\)
\end{proposition}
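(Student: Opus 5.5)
We prove the area law by a high temperature (small \( \beta \)) expansion. The idea is that, at \( \beta = 0 \), the edge spins are independent, and \( W_{j\gamma} \) can only be compensated by plaquette variables spanning a surface bounded by \( \gamma \). The natural tool is the decomposition~\eqref{eq: new notation for angles}, where each edge variable is written as \( \theta(e)+\theta'(e) \) with \( \theta \in U(1)_k \) and \( \theta' \in \mathbb{Z}_k \). The Higgs term \( \kappa\rho(\theta)^k \) depends only on \( \theta \), so \( \theta' \) is a uniform \( \mathbb{Z}_k \)-gauge field which only feels the plaquette term. We integrate out the \( \theta' \) variables exactly, through a character (Fourier) expansion of \( e^{2\beta\Re\rho(d\sigma(p))} \) in the plaquette variables, keeping \( \theta \) fixed.

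\textbf{Step 1: expansion in plaquette characters.} For each \( p \in C_2(B_N)^+ \) write
\[
e^{2\beta \Re \rho(d\sigma(p))} = \sum_{q \in \mathbb{Z}} I_q(2\beta)\rho(d\sigma(p))^q = I_0(2\beta)\sum_{q\in\mathbb{Z}} c_q(\beta)\rho(d\sigma(p))^q,
\]
with \( c_q = I_q(2\beta)/I_0(2\beta) \leq (C\beta)^{|q|} \). Expanding the product over plaquettes gives a sum over integer \( 2 \)-chains \( \omega \), with weight \( \prod_p c_{\omega(p)} \). The integrand then becomes
\[
\rho(\sigma(j\gamma + \partial\omega))\, e^{\kappa\sum_e \rho(\sigma(e))^k}.
\]

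\textbf{Step 2: integrate out \( \theta' \).} Averaging over the uniform \( \theta' \in \Omega_1(B_N,\mathbb{Z}_k) \) kills the term unless \( j\gamma + \partial \omega \equiv 0 \pmod k \) edgewise. In the remaining terms, the \( \theta \)-integral factorizes over edges into ratios of the form \( b_{i,k}/b_{0,k} \), which have modulus at most one. Dividing by the same expansion of \( Z_{j,k}[0] \) requires care, so the cleanest route is to bound numerator and denominator by a convergent cluster/polymer expansion in \( \omega \) for \( \beta \) small.

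\textbf{Step 3: the area bound.} Suppose \( k \nmid j \) and \( \gamma \) is a rectangular loop. The constraint \( \partial\omega \equiv -j\gamma \pmod k \) says that \( \omega \bmod k \) is a \( \mathbb{Z}_k \)-chain with boundary \( -j\gamma \not\equiv 0 \). Hence its support must contain a surface spanning \( \gamma \), so \( |\support \omega| \geq \area(\gamma) \). The weight \( \prod_p |c_{\omega(p)}| \leq (C\beta)^{|\support\omega|} \), together with the standard entropy bound on connected plaquette sets, then yields \( Ce^{-c\area(\gamma)} \) for \( \beta < \beta_c' \), uniformly in \( \kappa \).

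\textbf{Main obstacle.} The hardest step is the ratio with the partition function. The plaquette parts of \( \omega \) that are closed modulo \( k \) (the vacuum fluctuations) appear in both numerator and denominator, but their \( \theta \)-weights differ near \( \gamma \). I would handle this as in Lemma~\ref{lemma: upper bound of expansion}: treat connected components of \( \support\omega \) as polymers whose activity is at most \( (C\beta)^{|\cdot|} \), uniformly in \( \kappa \) since all \( \theta \)-factors have modulus at most one. The standard Kotecký--Preiss convergence then gives that the ratio is bounded by the sum over the polymers touching \( \gamma \) forced to span it. This sum is \( \leq e^{-c\area(\gamma)} \) up to a perimeter factor \( e^{C|\gamma|} \), which is absorbed because \( \area(\gamma) \gg |\gamma| \) for large rectangles, and otherwise absorbed into the constant \( C \).
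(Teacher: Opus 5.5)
Your argument is correct, but it takes a different route from the paper.

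\textbf{The paper's route.} The paper works with the unnormalized current expansion of Proposition~\ref{proposition: current for Higgs with charge}. It writes each \( \mathbf{n} \in \mathcal{C}_{j\gamma,k} \) as a minimal current \( \mathbf{m} \) plus some \( \mathbf{m}' \in \mathcal{C}_{0,k} \). Using \( w(\mathbf{m}+\mathbf{m}') \le w(\mathbf{m})w(\mathbf{m}') \), it gets \( \mathbb{E}_{N,\beta,\kappa,k}[W_{j\gamma}] \le \sum_{\mathbf{m}} w_{\beta,\kappa}(\mathbf{m}) \), and then finishes with the spanning-surface observation and a Peierls count.

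\textbf{Your route.} After integrating out \( \theta' \) and \( \theta \), your character expansion is that same current expansion, summed down to the net plaquette currents \( \omega \). The edge sums become \( I_s(2\kappa) \), which you normalize by \( I_0(2\kappa) \).

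\textbf{What each approach buys.} Your normalization puts every edge factor in \( [0,1] \), so your threshold \( \beta_c' \) is independent of \( \kappa \). That is what the quantifier order in the statement literally asks for. The paper's minimal currents instead carry unnormalized factors \( \kappa^{\mathbf{m}[e]}/\mathbf{m}[e]! \), so a direct enumeration of \( \sum_{\mathbf m} w_{\beta,\kappa}(\mathbf m) \) gives a smallness condition on \( \beta \) that worsens as \( \kappa \) grows.

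In the other direction, the paper's decomposition shows that your ``main obstacle'' needs no Koteck\'y--Preiss argument. Since \( I_q(2\beta) \ge 0 \) and \( I_s(2\kappa) \ge 0 \), all your weights are nonnegative. Split \( \omega \) into the components sharing an edge with \( \gamma \) and the rest. The rest is an admissible configuration of the denominator, with exactly the same weight there. By positivity, the ratio is therefore at most the sum over the \( \gamma \)-touching part alone.

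\textbf{One correction.} Your absorption of the factor \( e^{C|\gamma|} \) is misjustified for thin rectangles. A \( 1\times n \) rectangle has \( \area(\gamma) \) comparable to \( |\gamma| \), and \( C \) may not depend on \( \gamma \). Instead, use \( |\gamma| \le 4\area(\gamma) \), which holds for every rectangle, and shrink \( \beta \).
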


\begin{proof}

	Recall first that  \( \mathbf{n} \in \mathcal{C}_{j\gamma,k}. \) if and only if 
	 \begin{equation*} 
		 (\mathbf{n}[\hat \partial e]-\mathbf{n}[-\hat \partial e]) + k(\mathbf{n}[e]-\mathbf{n}[-e]) +  j\gamma[e]
		 =  
		 0,
	\end{equation*}
	
	Let \( \mathcal{C}_{j\gamma,k}^0 \) be the set of \( \mathbf{n} \in  \mathcal{C}_{j\gamma,k} \) that are minimal in the sense that there is no \( \mathbf{m} \leq \mathbf{n} \) such that \( \mathbf{m} \in \mathcal{C}_{j\gamma,k}^0. \) This allows us to write
	\begin{align*}
		\sum_{\mathbf{n} \in \mathcal{C}_{j\gamma,k}} w(\mathbf{n}) 
		\leq 
		\sum_{\mathbf{m} \in \mathcal{C}_{j\gamma,k}^0}  \sum_{\substack{\mathbf{n} \in \mathcal{C}_{j\gamma,k} \mathrlap{\colon} \\ \mathbf{m}\leq \mathbf{n}}} w(\mathbf{n}) 
		= 
		\sum_{\mathbf{m} \in \mathcal{C}_{j\gamma,k}^0}  \sum_{\substack{\mathbf{m}' \in \mathcal{C}_{0,k}  }} w(\mathbf{m}+\mathbf{m}') 
		\leq
		\sum_{\mathbf{m} \in \mathcal{C}_{j\gamma,k}^0}  w(\mathbf{m})\sum_{\substack{\mathbf{m}' \in \mathcal{C}_{0,k}  }} w(\mathbf{m}'),
	\end{align*}
	which implies, in particular, that
	\begin{align*}
		\mathbb{E}_{N,\beta,\kappa,k}[W_{j\gamma}] \leq \sum_{\substack{\mathbf{m}' \in \mathcal{C}_{0,k}  }} w(\mathbf{m}').
	\end{align*}
	Now note that by definition, the support of any \( \mathbf{n} \in \mathcal{C}_{j\gamma,k}^0\) is connected, and moreover, this is still true if we restrict the support to \( C_2(B_N).\) Moreover, since \( k \not \mid j, \) the support of any \( \mathbf{n} \in \mathcal{C}_{j\gamma,k}^0 \) must contain at least \( \area(\gamma) \) plaquettes. The desired conclusion now follows from standard enumeration arguments. 
\end{proof}
	
\subsection{The Marcu--Fredenhagen ratio}

In this section, we state and prove our main results about the Marcu--Fredenhagen ratio for the case \( k \mid j \geq 1. \)  Note that if \( k \not \mid j, \) then, by~Lemma~\ref{lemma: positivity}, the Marcu--Fredenhagen ratio is identically zero.

 \subsubsection{The free phase}
 In this section, we consider the free phase, corresponding roughly to the part of the phase diagram where \( \beta \) is large and \( \kappa \) is small.
 
\begin{proposition}
\label{proposition: free}
	Let \( j , k \geq 1 \) and assume that \( k \mid j. \) Let \( \beta_0 \) be the phase transition between perimeter law and area law in the pure model. Then for every \( \beta>\beta_0 \) and \( \kappa \) sufficiently small compared to \( \beta, \) we have \( \lim_{n \to \infty} r_{n,k,j} = 0. \)
\end{proposition}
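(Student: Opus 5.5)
We use the current expansion of Proposition~\ref{proposition: current for Higgs with charge} and a coupling in the spirit of disagreement percolation. By Proposition~\ref{proposition: current for Higgs with charge}, each of \( \langle W_{j\gamma_n}\rangle \), \( \langle W_{j\gamma_n'}\rangle \) and \( \langle W_{j(\gamma_n+\gamma_n')}\rangle \) is a ratio \( Z[\cdot]/Z[0] \) of weighted sums over currents. The ratio therefore becomes
\begin{equation*}
	r_{n,k,j} = \frac{Z_{j,k}[\gamma_n]\, Z_{j,k}[\gamma_n']}{Z_{j,k}[\gamma_n+\gamma_n']\, Z_{j,k}[0]}.
\end{equation*}
Since \( \gamma_n \) and \( \gamma_n' \) are open paths with the same endpoints \( x_n,y_n \), every current in \( \mathcal{C}_{j\gamma_n,k} \) contains a chain of \( \kappa \)-edges carrying a net flux \( j/k \) from \( y_n \) to \( x_n \) (this is the computation \( k\delta\sigma + j\partial\gamma_n = 0 \) in the proof of Lemma~\ref{lemma: positivity}). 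The same holds, with reversed sign, for \( \mathcal{C}_{j\gamma_n',k} \). The numerator is a sum over pairs \( (\mathbf{n}_1,\mathbf{n}_2) \in \mathcal{C}_{j\gamma_n,k}\times\mathcal{C}_{j\gamma_n',k} \), and adding the currents maps such pairs into \( \mathcal{C}_{j(\gamma_n+\gamma_n'),k}\times \mathcal{C}_{0,k} \). This is the standard switching-lemma setup.

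The first step is to prove a switching-type identity. Every pair \( (\mathbf{n}_1,\mathbf{n}_2) \) whose sum has a given pattern can be rearranged into a pair in \( \mathcal{C}_{j(\gamma_n+\gamma_n'),k}\times \mathcal{C}_{0,k} \), with weights related by multinomial factors. The effect is to bound \( r_{n,k,j} \) by the probability, under the product measure \( \mathbf{P}^{j(\gamma_n+\gamma_n'),k}\otimes\mathbf{P}^{0,k} \), that the sum current contains a \( \kappa \)-edge path (modulo plaquettes) connecting \( x_n \) to \( y_n \). If an exact switching lemma is unavailable for the \( U(1) \) setting with two types of generators, I would fall back to a weaker comparison. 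In that version one only needs \( r_{n,k,j} \le \mathbf{P}(\text{the sum current has a }\kappa\text{-edge connection from } x_n \text{ to } y_n) \), obtained by decomposing each numerator current into a \( \kappa \)-edge chain and a remainder.

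The second step is to show this probability tends to zero when \( \beta>\beta_0 \) and \( \kappa \) is small compared to \( \beta \). Each \( \kappa \)-edge carries weight \( \kappa \), so a connecting chain of length \( \ell \ge |x_n-y_n| = Rn \) has weight of order \( \kappa^{\ell} \). The number of such chains is at most \( (2m)^\ell \), but this counting must be done relative to the plaquette part of the current. In the free phase the plaquette part should behave like pure \( U(1) \) theory at large \( \beta \), which has perimeter law by Proposition~\ref{proposition: always a region with perimeter law}. Ratios of the form \( Z_{\beta,\kappa}[\eta]/Z_{\beta,\kappa}[0] \) for the closed loops \( \eta \) created when \( \kappa \)-edges are removed should therefore be bounded by \( C^{|\eta|} \) with \( C \) uniform in small \( \kappa \). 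This gives a bound of order \( \sum_{\ell\ge Rn}(2m C\kappa)^\ell \to 0 \) once \( \kappa < 1/(2mC) \).

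The main obstacle is controlling the plaquette-current contribution uniformly in \( \kappa \), i.e.\ proving that inserting or deleting the \( \kappa \)-chain costs only exponentially in its length with a \( \beta \)-dependent constant. The first thing I would try is the Griffiths--Hurst--Sherman inequality to compare with the pure model. Lower bounds on the denominator should come from \( \langle W_{j(\gamma_n+\gamma_n')}\rangle_{\beta,\kappa,k} \ge \langle W_{j(\gamma_n+\gamma_n')}\rangle_{\beta} \ge c e^{-c'|\gamma_n+\gamma_n'|} \), since \( \beta>\beta_0 \) gives perimeter law. Upper bounds on the numerator should come from expanding in \( \kappa \) around \( \kappa=0 \), where the numerator vanishes by Lemma~\ref{lemma: positivity}. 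The leading term is then at least of order \( \kappa^{2\cdot Rn\cdot j/k} \). The difficulty is that the perimeter constants \( c' \) of the denominator and the numerator's Wilson-line pieces must match well enough for the \( \kappa^{Rn} \) factor to win, which is why \( \kappa \) must be small depending on \( \beta \).
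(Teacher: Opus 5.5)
\textbf{There is a genuine gap: as written, your proposal never bounds the numerator.} Your first step is not established. No switching identity is proved for these \( \mathbb{Z} \)-valued currents with mixed plaquette and edge generators. The fallback inequality \( r_{n,k,j}\le \mathbf{P}(\text{sum current has a }\kappa\text{-connection from }x_n\text{ to }y_n) \) is asserted without a derivation. Your second step only estimates that probability, so by itself it does not bound \( r_{n,k,j} \).

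Your last paragraph does not close the gap either. The denominator bound \( \langle W_{j(\gamma_n+\gamma_n')}\rangle_{\beta,\kappa,k}\ge\langle W_{j(\gamma_n+\gamma_n')}\rangle_{\beta}\ge Ce^{-c'_\beta|\gamma_n+\gamma_n'|} \), via monotonicity in \( \kappa \) and the pure-theory perimeter law, is fine and matches the paper. But ``expanding in \( \kappa \) around \( \kappa=0 \)'' only tells you the order at which the finite-volume Taylor coefficients vanish. It gives no bound uniform in \( N \) and \( n \), and at \( \beta>\beta_0 \) there is no convergent expansion in the plaquette variables to control the remainder. So the step you yourself call the main obstacle is left open.

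\textbf{How the paper closes it.} It uses monotonicity in \( \beta \). By Ginibre's inequality, \( \langle W_{j\gamma_n}\rangle_{\beta,\kappa,k}\le\langle W_{j\gamma_n}\rangle_{\infty,\kappa,k} \). At \( \beta=\infty \) the field is pure gauge, \( \sigma=d\eta \). After the substitution \( \eta\mapsto k\eta \), which preserves Haar measure, this becomes the XY two-point function \( \langle\rho(\eta(y_n)-\eta(x_n))^{j/k}\rangle \) at coupling of order \( \kappa \). That decays like \( e^{-c_\kappa|x_n-y_n|} \) with \( c_\kappa\to\infty \) as \( \kappa\to 0 \), uniformly in \( \beta \).

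\textbf{How your own idea closes it.} Your chain-removal idea also works, with no switching at all, if you apply it to \( \langle W_{j\gamma_n}\rangle \) directly. View the \( \kappa \)-part of any \( \mathbf{n}\in\mathcal{C}_{j\gamma_n,k} \) as a directed multigraph. Its out-minus-in degree is \( \pm j/k \) at the two endpoints and \( 0 \) elsewhere, so it contains a self-avoiding directed path \( \omega\le\mathbf{n} \) between them. Then \( w_{\beta,\kappa}(\mathbf{n})\le\kappa^{|\omega|}w_{\beta,\kappa}(\mathbf{n}-\omega) \) and \( \mathbf{n}-\omega\in\mathcal{C}_{j\gamma_n+k\omega,k} \). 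Hence \( \langle W_{j\gamma_n}\rangle_{\beta,\kappa,k}\le\sum_\omega\kappa^{|\omega|}\langle W_{j\gamma_n+k\omega}\rangle_{\beta,\kappa,k}\le\sum_{\ell\ge Tn}(2m\kappa)^\ell \). The cost of removing the chain is a Wilson expectation in the full model, hence at most \( 1 \). No perimeter law and no \( \beta \)-dependent constant enter the numerator.

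\textbf{Conclusion.} Either way, note that \( |x_n-y_n|=Tn \) (not \( Rn \)) and \( |\gamma_n+\gamma_n'|=2Tn+4Rn \). This gives \( r_{n,k,j}\le C\exp\bigl(-(2c_\kappa T-c'_\beta(2T+4R))n\bigr)\to 0 \) once \( c_\kappa \) is large compared to \( c'_\beta \). That is the precise meaning of ``\( \kappa \) small compared to \( \beta \)''.
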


\begin{remark}
	The proof of Proposition~\ref{proposition: free} below closely follows the proof of for the case \( j = k = 1 \) in~\citep{fm1983}, and can also be found in~\citep[Section 1.2]{b1990}. 
\end{remark}

\begin{proof}[Proof of Proposition~\ref{proposition: free}]
	We first give an upper bound on the numerator of \( r_{n,k,j}. \) To this end, note first that by the Griffiths-Hurst-Sherman inequality, we have
	\begin{align*}
		& \mathbb{E}_{N,\beta,\kappa,k} [W_{j\gamma_n}]\mathbb{E}_{N,\beta,\kappa,k} [W_{j\gamma_n'}] 
		=
		\mathbb{E}_{N,\beta,\kappa,k} [W_{j\gamma_n}]^2
		\leq  
		\mathbb{E}_{N,\infty,\kappa,k} [W_{j\gamma_n}]^2
		=
		\mathbb{E}_{N,\infty,\kappa,1} [W_{(j/k)\gamma_n}]^2. 
	\end{align*}
	Note that when \( \beta= \infty, \) then \( \sigma \sim \mu_{N,\infty,\kappa,1} \) has the same distribution as \( d\eta, \) where \( \eta \) is the XY-model on \( \Omega_0(B_N,U(1)). \) Consequently, ff \( \kappa \) is sufficiently small, then the probability on the right-hand side of the previous equation decays like \( e^{-c_\kappa |y_n-x_n|}, \) where \( c_\kappa \) can be made arbitrarily large by choosing \( \kappa \) small.

	We now give a lower bound on the denominator of \( r_{n,k,j}. \) To this end, note that by the Griffiths-Hurst-Sherman inequality, we have
	\begin{align*}
		\mathbb{E}_{N,\beta,\kappa,k} [W_{j(\gamma_n+\gamma_n')}] \geq 
		\mathbb{E}_{N,\beta,0,k} [W_{j(\gamma_n+\gamma_n')}] \geq 
		\mathbb{E}_{N,\beta,0,k} [W_{\gamma_n+\gamma_n'}]^j
	\end{align*}
	Note that when \( \kappa = 0, \) we get the pure \( U(1) \) lattice gauge theory. Consequently,  if \( \beta \) is sufficiently large, then  \( \mathbb{E}[W_{\gamma_n+\gamma_n'}] \) decays like \( e^{-c'_\beta |\gamma|} , \) where \( c'_\beta \) can be made arbitrarily small by choosing \( \beta \) large (see, e.g., \citep{fs1982}). 
	
	Combining the upper and lower bounds given above, the desired conclusion immediately follows.
\end{proof}

\subsubsection{The confinement regime}

In this section, we consider the confinement regime, which roughly corresponds to the part of the phase diagram where \( \beta\) is small.
The primary tool we employ in this regime is the current expansion, as described in Section~\ref{sec: current expansion}, which enables us to map the abelian lattice Higgs model to a discrete model. We then use disagreement percolation (see, e.g., \citep{b1993}) to lower bound the Marcu--Fredenhagen ratio. An alternative route for proving this result would be to instead first use either the current expansion or the expansion described in~Section~\ref{section: new version of expansion}, and then proceed by using a cluster expansion of \( \log Z[\gamma] \) (see, e.g., \citep[Chapter 5]{fv2007}). However, the proof presented here, using disagreement percolation, requires the expansion to assign positive weight to clusters and thus cannot be used in conjunction with the signed expansion of~Section~\ref{section: new version of expansion}. 

\begin{proposition}[MF ratio in the confinement regime]\label{proposition: confinement}
	Let \( \beta,\kappa > 0, \) and let \( j , k \geq 1 \) be such that \( k \mid j.\) Then, if \( \bigl(1+16(d-1)\bigr)^{2} ( e^{j\beta /k  }-1) e^{4\kappa} \)
	is sufficiently small, we have \( \liminf_{n \to \infty} r_{n,k,j} > 0. \) 
\end{proposition}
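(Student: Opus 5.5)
Write $\langle \cdot \rangle$ for $\langle \cdot \rangle_{\beta,\kappa,k}$. By Proposition~\ref{proposition: current for Higgs with charge}, each of the three expectations in $r_{n,k,j}$ is a ratio of current sums:
\[
\langle W_{j\gamma}\rangle = \frac{\sum_{\mathbf n\in\mathcal C_{j\gamma,k}} w_{\beta,\kappa}(\mathbf n)}{\sum_{\mathbf n\in\mathcal C_{0,k}} w_{\beta,\kappa}(\mathbf n)}.
\]
Hence it suffices to show that
\[
Z[j\gamma_n]\, Z[j\gamma_n'] \geq c\, Z[0]\, Z[j(\gamma_n+\gamma_n')]
\]
uniformly in $n$ and $N$. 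Since $k\mid j$, Lemma~\ref{lemma: positivity} gives that all four current sets are nonempty. Each also contains a canonical current $\mathbf n_\gamma$, built as in that lemma: $j/k$ units of edge current along $\gamma$ and no plaquette current.

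The plan is to decompose an arbitrary current $\mathbf n\in\mathcal C_{j\gamma,k}$ as $\mathbf n_\gamma+\mathbf m$ with $\mathbf m\in\mathcal C_{0,k}$. The source-free current $\mathbf m$ then splits into connected clusters, namely connected components of its support, where edges and plaquettes are adjacent when they share an edge. Because the weight is multiplicative, $w(\mathbf n_\gamma+\mathbf m)$ equals $w(\mathbf n_\gamma)w(\mathbf m)$ times a correction factor from the factorials on $\gamma$. That factor is bounded between $e^{-O(\kappa)}$-type constants per edge of $\gamma$ touched by $\mathbf m$, which is where the $e^{4\kappa}$ in the hypothesis enters.

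Under the smallness assumption on $(1+16(d-1))^2(e^{j\beta/k}-1)e^{4\kappa}$, the measure $\mathbf P^{0,k}$ on source-free currents is a subcritical "polymer" measure. A fixed plaquette or edge is covered by a cluster of size $\ge \ell$ with probability at most $\epsilon^\ell$. Here $\epsilon$ comes from a Peierls count: there are at most $(1+16(d-1))^{2\ell}$ connected sets of size $\ell$ containing a given cell, and each plaquette carries weight at most $e^{j\beta/k}-1$ after summing over multiplicities.

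I would then run a disagreement-percolation coupling in the spirit of~\citep{b1993}. Couple $\mathbf P^{j\gamma_n,k}\otimes\mathbf P^{j\gamma_n',k}$ with $\mathbf P^{j(\gamma_n+\gamma_n'),k}\otimes\mathbf P^{0,k}$ by swapping everything in the region influenced by $\gamma_n'$ between the two copies. In a region where the clusters attached to $\gamma_n$ and those attached to $\gamma_n'$ do not touch, the pair of configurations can be exchanged bijectively. Gluing the $\gamma_n$-part of one current with the $\gamma_n'$-part of the other maps pairs in $\mathcal C_{j\gamma_n,k}\times\mathcal C_{j\gamma_n',k}$ to pairs in $\mathcal C_{j(\gamma_n+\gamma_n'),k}\times\mathcal C_{0,k}$, and this map is weight-preserving by multiplicativity and linearity of the constraints. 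The ratio $r_{n,k,j}$ is then bounded below by the probability, under the product measure, that no chain of clusters connects the support near $\gamma_n$ to the support near $\gamma_n'$, except near the endpoints $x_n,y_n$ where the two paths meet.

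The main obstacle is exactly these two common endpoints. There a cluster touching both paths has probability bounded below but not close to zero, so the event cannot hold with high probability. I would handle this by forcing: at each endpoint, restrict to configurations where a fixed finite neighbourhood is empty. This costs only a constant factor by finite energy. Away from the endpoints, a union bound over distances gives a total bad probability of order $\sum_{x\in\gamma_n,y\in\gamma_n'}\epsilon^{\dist(x,y)}$. Since $\gamma_n$ and $\gamma_n'$ are separated by distance of order $\min(R,T)n$ except near $x_n,y_n$, this sum is summable uniformly in $n$, and it is small once the neighbourhoods are large. This yields $\liminf_n r_{n,k,j}\ge c>0$, uniformly in $N$, after passing to $N\to\infty$.
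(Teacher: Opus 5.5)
Your architecture matches the paper's. You use the current expansion and a weight-preserving swap between $\mathcal C_{j\gamma_n,k}\times\mathcal C_{j\gamma_n',k}$ and $\mathcal C_{j(\gamma_n+\gamma_n'),k}\times\mathcal C_{0,k}$ on the event that the components attached to $\gamma_n$ and to $\gamma_n'$ are disjoint. This bounds $r_{n,k,j}$ below by the probability $\alpha$ of that event under $\mathbf P^{j(\gamma_n+\gamma_n'),k}\times\mathbf P^{0,k}$, and you finish with Peierls and a union bound.

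The gap is in how you control clusters under the sourced factor $\mathbf P^{j(\gamma_n+\gamma_n'),k}$, which is half of the measure defining $\alpha$. The decomposition $\mathbf n=\mathbf n_\gamma+\mathbf m$ with $\mathbf m\in\mathcal C_{0,k}$ does not exist in general. Currents are $\mathbb N$-valued, and $\mathbf n\in\mathcal C_{j\gamma,k}$ need not dominate $\mathbf n_\gamma$, because flux can be rerouted through plaquettes. For example, put $\mathbf n[p]=j$ on a plaquette $p$ adjacent to $e\in\gamma$, $j/k$ units of edge current on the other three edges of $p$, and nothing on $\pm e$. Such rerouting components are exactly the ones that could link $\gamma_n$ to $\gamma_n'$. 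For them the comparison factor is not $e^{-O(\kappa)}$ per touched edge. Deleting the component and restoring the canonical current $\mathbf m_0$ on the touched part $\gamma''$ of the loop costs $1/w_{\beta,\kappa}(\mathbf m_0)=\bigl((j/k)!/\kappa^{j/k}\bigr)^{|\gamma''|}$. This is unbounded as $\kappa\to 0$, a regime your hypothesis allows.

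The paper handles this in three steps:
\begin{itemize}
\item For a component $\mathbf m\le\mathbf n$ of $\mathcal G(\mathbf n)$, the map $\mathbf n\mapsto\mathbf n-\mathbf m+\mathbf m_0$ is an injection into $\mathcal C_{j(\gamma_n+\gamma_n'),k}$. Hence $\mathbf P^{j(\gamma_n+\gamma_n'),k}(\mathbf m\le\mathbf n)\le w_{\beta,\kappa}(\mathbf m)/w_{\beta,\kappa}(\mathbf m_0)$.
\item It then uses the geometric fact that a component carrying the flux along $\gamma''$ has at least as many edge-current units as $\mathbf m_0$ when $|\gamma''|\le\min(2R_n,T_n)$, with an $o_n(1)$ loss otherwise.
\item This absorbs $1/w(\mathbf m_0)$ into the component's own edge weights, giving the bound $w_{(j/k+o_n(1))\beta,\kappa}(\mathbf m)$.
\end{itemize}
That is where $e^{j\beta/k}-1$ in the hypothesis comes from. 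Under $\mathbf P^{0,k}$ alone one only gets $e^{\beta}-1$, via $\mathbf P^{0,k}(\mathbf m\le\mathbf n)\le w_{\beta,\kappa}(\mathbf m)$. The factor $e^{4\kappa}$ comes from summing edge multiplicities over the four boundary edges of each plaquette, not from a correction along $\gamma$.

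Separately, the endpoints are not an obstacle. $\mathcal G(\mathbf n_1+\mathbf n_2)$ has no edge--edge adjacency, so the edge currents of $\gamma_n$ and $\gamma_n'$ at $x_n$ or $y_n$ lie in different components unless plaquettes join them. The component at a given edge has at least $i\ge 1$ plaquettes with probability at most $a^i/(1-a)$, where $a=(1+16(d-1))^2(e^{(j/k+o_n(1))\beta}-1)e^{4\kappa}$. The edges near the two endpoints therefore contribute $2a/(1-a)^2$ to the union bound, and the remaining edges contribute $T_na^{R_n}/(1-a)$. So $\alpha\ge 1-2a/(1-a)^2-T_na^{R_n}/(1-a)$ directly.

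Your own estimate $\sum_{x,y}\epsilon^{\dist(x,y)}$ already shows the corner contribution is $O(\epsilon)$, which contradicts your claim that it is not close to zero. The forcing/finite-energy step is therefore unnecessary. As stated it would also itself require the cluster-deletion argument above, since a neighbourhood of $x_n$ cannot be emptied in $\mathcal C_{j(\gamma_n+\gamma_n'),k}$ without reinserting the canonical current.
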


 We illustrate the region where Proposition~\ref{proposition: confinement} applies in Figure~\ref{fig: currentweight}.

\begin{figure}[h]
	\centering
	\includegraphics[width=.5\textwidth]{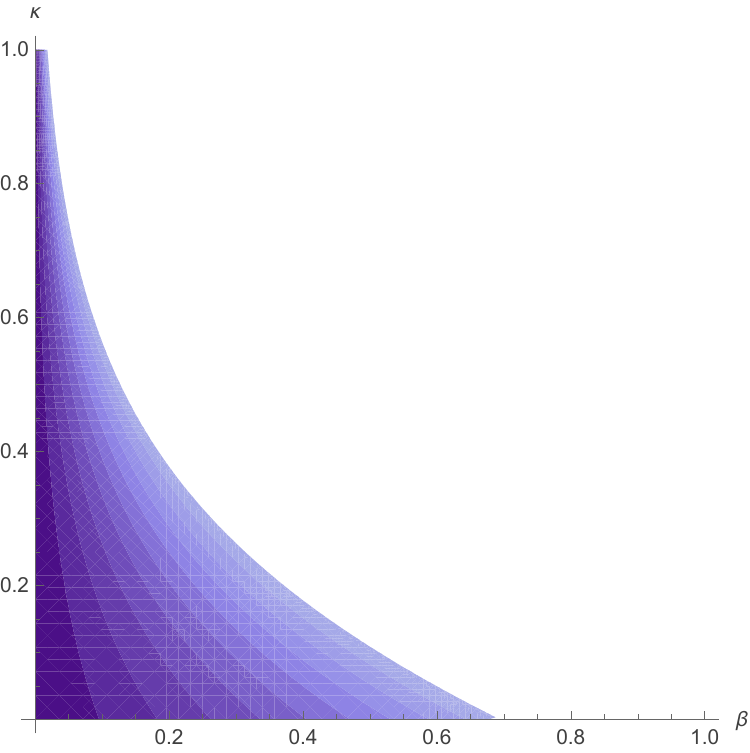}
	\caption{The figure above shows the level sets of the function on the right hand side of the function \( (e^{\beta}-1)e^{4\kappa} \), which equivalently is the level sets of the upper bound in Proposition~\ref{proposition: confinement}.} \label{fig: currentweight}
\end{figure}

In the proof of Proposition~\ref{proposition: confinement}, we will use the current expansion described in Section~\ref{sec: current expansion}. In addition, we will use the following notation.

When \( \mathbf{n} \in \mathcal{C} \), we let \( \mathcal{G}(\mathbf{n}) \) be the graph with vertex set \( \support \mathbf{n} \), an edge between two plaquettes \( p_1,p_2 \in \support \mathbf{n} \cap C_2(B_N) \) if \( \support \partial p_1 \cap \support \partial p_2 \neq \emptyset, \) and an edge between \( p \in \support \mathbf{n} \cap C_2(B_N) \)  and \( e \in \support \mathbf{n} \cap C_1(B_N) \) if \( e \in \pm \support \partial p. \)

Whenever \( \mathbf{m},\mathbf{n} \in \mathcal{C}, \) we write \( \mathbf{m} \leq \mathbf{n} \) if the following holds.
\begin{enumerate}
	\item \( \mathbf{n}[c] = \mathbf{m}[c] \) for all \( c \in \support \mathbf{m}, \) and
	\item \( \support \mathbf{m} \) is a connected component of \( \mathcal{G}(\mathbf{n}). \)
\end{enumerate}

We now proceed to the proof of Proposition~\ref{proposition: confinement}.
\begin{proof}[Proof of Proposition~\ref{proposition: confinement}]
	First, we note that for any path \( \gamma \), by~Proposition~\ref{proposition: current for Higgs with charge}, we have
	\begin{align*}
		\mathbb{E}_{N,\beta,\kappa,k}[W_{j\gamma}] = \frac{\sum_{\mathbf{n}\in\mathcal{C}_{j\gamma,k}} w_{\beta,\kappa}(\mathbf{n})}{\sum_{\mathbf{n}\in\mathcal{C}_{0,k}} w_{\beta,\kappa}(\mathbf{n})},
	\end{align*}
	where we recall from Proposition~\ref{proposition: current for Higgs with charge} that
	\begin{equation*}
		\mathcal{C}_{j\gamma,k} \coloneqq \bigl\{ \mathbf{n} \in \mathcal{C} \colon (\mathbf{n}[\hat \partial e]-\mathbf{n}[-\hat \partial e]) + k(\mathbf{n}[e]-\mathbf{n}[-e]) +  j\gamma[e]= 0,  \; \forall e\in C_1(B_N) \bigr\}.
	\end{equation*}
	We will prove the statement by defining a bijective map \( \varphi\) between a set \( \tilde {\mathcal{C}}_{j\gamma_n,j\gamma_n'} \subseteq \mathcal{C}_{j\gamma_n,k}\times \mathcal{C}_{j\gamma'_n,k} \) and a set \( \tilde {\mathcal{C}}_{j(\gamma_n+\gamma_n'),0} \subseteq  \mathcal{C}_{j(\gamma_n+\gamma'_n),k}\times \mathcal{C}_{0,k} \) which preserves \( w_{\beta,\kappa} \), and then show that \( \tilde {\mathcal{C}}_{j(\gamma_n+\gamma_n'),0} \) have positive density \( \alpha \) with respect to  \( \mathbf{P}^{j(\gamma_n+\gamma_n'),k} \times  \mathbf{P}^{0,k}\).
	Since this implies that 
	\begin{align*}
		&\frac{\mathbb{E}_{N,\beta,\kappa,k}[W_{j\gamma_n}]\mathbb{E}_{N,\beta,\kappa,k}[W_{j\gamma_n'}]}{\mathbb{E}_{N,\beta,\kappa,k}[W_{j(\gamma_n+\gamma_n')}]} =
		\frac{w_{\beta,\kappa}(\mathcal{C}_{j\gamma_n,k}) w_{\beta,\kappa}(\mathcal{C}_{j\gamma_n',k})}{w_{\beta,\kappa}(\mathcal{C}_{j(\gamma_n+\gamma_n'),k} )w_{\beta,\kappa}(\mathcal{C}_{0,k}) }  
		\geq
		\frac{w_{\beta,\kappa}(\tilde{\mathcal{C}}_{j\gamma_n,j\gamma'_n})}{w_{\beta,\kappa}(\mathcal{C}_{j(\gamma_n+\gamma_n'),k} )w_{\beta,\kappa}(\mathcal{C}_{0,k}) } 
		\\&\qquad=
		\frac{w_{\beta,\kappa}(\tilde{\mathcal{C}}_{\gamma+\gamma',0})}{w_{\beta,\kappa}(\mathcal{C}_{\gamma+\gamma',k} )w_{\beta,\kappa}(\mathcal{C}_{0,k}) } 
		=
		\frac{\alpha w_{\beta,\kappa}(\mathcal{C}_{j(\gamma+\gamma'),k} )w_{\beta,\kappa}(\mathcal{C}_{0,k})}{w_{\beta,\kappa}(\mathcal{C}_{j(\gamma_n+\gamma_n'),k)} )w_{\beta,\kappa}(\mathcal{C}_{0,k}) } =\alpha >0,
	\end{align*}
	this implies the desired conclusion.
	
	We now define \( \varphi, \) \( \tilde{\mathcal{C}}_{j\gamma_n,j\gamma_n'} \) and \( \tilde {\mathcal{C}}_{j(\gamma_n+\gamma_n'),0}.\) 
	Given \( \mathbf{n}_1 \in \mathcal{C}_{j\gamma_n,k} \), \( \mathbf{n}_2 \in \mathcal{C}_{j\gamma_n',k} \), and a path \( \tilde \gamma, \) let  \( A_{\tilde \gamma}(\mathbf{n}_1,\mathbf{n}_2) \) be the set of all plaquettes that are in some connected component of \( \mathcal{G}( \mathbf{n}_1 +\mathbf{n}_2) \) that is adjacent to at least one of the edges in \( \tilde \gamma\). Further, let
	\[
		\Lambda_{\tilde \gamma}(\mathbf{n}_1,\mathbf{n}_2) \coloneqq A_{\tilde \gamma}(\mathbf{n}_1,\mathbf{n}_2) \cup \{ e \subseteq C_1(B_N) \colon \pm \support \hat \partial e \cap A_{\tilde \gamma}(\mathbf{n}_1,\mathbf{n}_2) \neq \emptyset \},
	\]  
	\begin{equation*}
		\tilde {\mathcal{C}}_{j\gamma_n,j\gamma_n'}   \coloneqq 
		\bigl\{  
		(\mathbf{n}_1,\mathbf{n}_2 ) \in \mathcal{C}_{j\gamma_n,k}\times \mathcal{C}_{j\gamma'_n,k} \colon 
		 A_{j\gamma_n}(\mathbf{n}_1,\mathbf{n}_2 ) \cap A_{j\gamma_n'}(\mathbf{n}_1,\mathbf{n}_2 ) = \emptyset
		\bigr\}
	\end{equation*}
	and, similarly, let
	\begin{equation*}
		\tilde {\mathcal{C}}_{j(\gamma_n+\gamma_n'),0}  \coloneqq 
		\bigl\{  
		(\mathbf{n}_1,\mathbf{n}_2 ) \in \mathcal{C}_{j(\gamma_n+\gamma_n'),k}\times \mathcal{C}_{0,k} \colon 
		 A_{j\gamma_n}(\mathbf{n}_1,\mathbf{n}_2 ) \cap A_{j\gamma_n'}(\mathbf{n}_1,\mathbf{n}_2 ) = \emptyset
		\bigr\}.
	\end{equation*} 
	Then
	\[
		 \varphi \colon (\mathbf{n}_1 , \mathbf{n}_2  ) \mapsto (\mathbf{n}_1 \mid \Lambda_{j\gamma_n} +  \mathbf{n}_2|_{\Lambda_{j\gamma_n}^c} ,\mathbf{n}_2 \mid \Lambda_{j\gamma_n}+  \mathbf{n}_1|_{\Lambda_{j\gamma_n}^c})
	\]
	is a bijective map from \(  \tilde {\mathcal{C}}_{j\gamma_n,j\gamma_n'}  \) to \(  \tilde {\mathcal{C}}_{j(\gamma_n+\gamma_n'),0} . \)  
	Moreover, since for every \( c \in C_1(B_N) \cup C_2(B_N) \) we have
	\[
		\bigl\{ \mathbf{n}_1(c),\mathbf{n}_2(c) \bigr\} = \bigl\{ \varphi(\mathbf{n}_1,\mathbf{n_2})_1(c),\varphi(\mathbf{n}_1,\mathbf{n_2})_2(c) \bigr\},
	\]
	it follows that
	\[
		w_{\beta,\kappa}\bigl(\varphi(\mathbf{n}_1,\mathbf{n}_2)_1\bigr)+w_{\beta,\kappa}\bigl(\varphi(\mathbf{n}_1,\mathbf{n}_2)_2\bigr) = w_{\beta,\kappa}(\mathbf{n}_1) + w_{\beta,\kappa}(\mathbf{n}_2).
	\]

	It now only remains to show that if \( \beta \) and \( \kappa \) are both sufficiently small, then 	\[
		\alpha = (\mathbf{P}^{j(\gamma_n+\gamma_n'),k} \times  \mathbf{P}^{0,k} )\bigl(\tilde {\mathcal{C}}_{j(\gamma_n+\gamma_n'),0} \bigr)  > 0 .
	\] 
	To this end, note first that by definition, we have
	\begin{align*}
		&(\mathbf{P}^{j(\gamma_n+\gamma_n'),k} \times  \mathbf{P}^{0,k})(\tilde {\mathcal{C}}_{j(\gamma_n+\gamma_n'),0}) 
		=
		(\mathbf{P}^{j(\gamma_n+\gamma_n'),k} \times  \mathbf{P}^{0,k} )\bigl( A_{j\gamma_n}(\mathbf{n}_1,\mathbf{n}_2 ) \cap A_{j\gamma_n'}(\mathbf{n}_1,\mathbf{n}_2 ) = \emptyset \bigr) 
	\end{align*} 
	On the event \(   A_{j\gamma_n}(\mathbf{n}_1,\mathbf{n}_2 ) \cap A_{j\gamma_n'}(\mathbf{n}_1,\mathbf{n}_2 ) = \emptyset, \) there must exist a connected component in \( \mathcal{G}( \mathbf{n}_1 + \mathbf{n}_2) \) that is adjacent to both \( \gamma_{n} \) and \( \gamma_n'. \) To upper bound the probability of this event, for an edge \( e \in \gamma_n \), let \( \mathcal{C}_e(\mathbf{n}_1,\mathbf{n}_2) \) be the connected component in \( \mathcal{G}( \mathbf{n}_1 + \mathbf{n}_2) \) that is adjacent to~\( e. \) Then 
	\begin{align*}
		&(\mathbf{P}^{j(\gamma_n+\gamma_n'),k} \times  \mathbf{P}^{0,k} ) \bigl( A_{j\gamma_n}(\mathbf{n}_1,\mathbf{n}_2 ) \cap A_{j\gamma_n'}(\mathbf{n}_1,\mathbf{n}_2 ) \neq \emptyset  \bigr)
		\\&\qquad \leq 
		\sum_{e \in \gamma_n} 
		(\mathbf{P}^{j(\gamma_n+\gamma_n'),k} \times  \mathbf{P}^{0,k}) \bigl( |\mathcal{C}_e| \geq \dist(e,\gamma_n') \bigr).
	\end{align*}

	Now assume that \( \mathbf{m}  \in \mathcal{C}_{0,k} \) is given. Then, for any \( \mathbf{n} \in \mathcal{C}_{0,k} \) such that \( \mathbf{m}\leq \mathbf{n}\) we have \( \mathbf{n}-\mathbf{m} \in \mathcal{C}_{0,k}.\) Also, since \( \mathbf{m} \) and \( \mathbf{n}- \mathbf{m} \) have disjoint supports, it follows that \( w_{\beta,\kappa}(\mathbf{n}) = w_{\beta,\kappa}(\mathbf{m}) w_{\beta,\kappa}(\mathbf{n-m}) )\).  Hence
	\begin{align*}
		\mathbf{P}_{\beta,\kappa}^{0,k}(\mathbf{m} \leq \mathbf{n}) = \frac{\sum_{\mathbf{n} \in \mathcal{C}_{0,k} \colon \mathbf{m} \leq \mathbf{n}} w_{\beta,\kappa}(\mathbf{n})}{\sum_{\mathbf{n} \in \mathcal{C}_{0,k} } w_{\beta,\kappa}(\mathbf{n})} 
		\leq 
		\frac{\sum_{\mathbf{n} \in \mathcal{C}_{0,k} \colon \mathbf{m} 
		\leq
		\mathbf{n}} w_{\beta,\kappa}(\mathbf{n})}{\sum_{\mathbf{n} \in \mathcal{C}_{0,k} \colon \mathbf{m} 
		= 
		\mathbf{n} } w_{\beta,\kappa}(\mathbf{n}-\mathbf{m})} 
		\leq 
		w_{\beta,\kappa}(\mathbf{m}) .
	\end{align*}
	Next, assume \( \mathbf{m} \in \mathcal{C} \) is such that \( \mathbf{m} \leq \mathbf{n} \in  \mathcal{C}_{j(\gamma+\gamma'),k} \). Let \( \gamma'' \) be the restriction of \( \gamma+\gamma' \) set of edges in \( \gamma+\gamma' \) that are adjacent to \( \mathbf{e}. \) 
	 Let \( \mathbf{m}_0 \) be defined by 
	\begin{equation*}
		\mathbf{m}_0[c] = 
		\begin{cases}
			j\gamma[e]/k &\text{if } c \in \gamma'' \cr 
			0 &\text{}else.
		\end{cases}
	\end{equation*} 
	Then \( \mathbf{n}-\mathbf{m} + \mathbf{m}_0 \in \mathcal{C}_{j (\gamma+\gamma'),k} . \)
	Moreover, we note that since the support of \( \mathbf{n}-\mathbf{m} \) is disjoint form the support of both \( \mathbf{m} \) and \( \mathbf{m}_0, \) we have both
	\[
	 w_{\beta,\kappa}(\mathbf{n})=  w_{\beta,\kappa}(\mathbf{n}-\mathbf{m})w_{\beta,\kappa}(\mathbf{m})\]
	 and
	 \[
	 w_{\beta,\kappa}(\mathbf{m}-\mathbf{m}+\mathbf{m}_0)=
	 w_{\beta,\kappa}(\mathbf{m}-\mathbf{m})
	 w_{\beta,\kappa}(\mathbf{m}_0).
	 \] 
	 Combining these observations, we obtain
	\begin{align*}
		&\mathbf{P}_{\beta,\kappa}^{j(\gamma+\gamma'),k}(\mathbf{m} \leq \mathbf{n}) = 
		\frac{\sum_{\mathbf{n} \in \mathcal{C}_{j(\gamma+\gamma'),k} \colon \mathbf{m} \leq \mathbf{n}} w_{\beta,\kappa}(\mathbf{n})}{\sum_{\mathbf{n} \in \mathcal{C}_{j(\gamma+\gamma'),k} } w_{\beta,\kappa}(\mathbf{n})} 
		\leq 
		\frac{\sum_{\mathbf{n} \in \mathcal{C}_{j(\gamma+\gamma'),k} \colon \mathbf{m} \leq \mathbf{n}} w_{\beta,\kappa}(\mathbf{n})}{\sum_{\mathbf{n} \in \mathcal{C}_{j(\gamma+\gamma'),k} \colon \mathbf{m}\leq \mathbf{n}} w_{\beta,\kappa}(\mathbf{n}-\mathbf{m}+ \mathbf{m}_0)} 
		\\&\qquad = 
		\frac{\sum_{\mathbf{n} \in \mathcal{C}_{j(\gamma+\gamma'),k} \colon \mathbf{m} \leq \mathbf{n}} w_{\beta,\kappa}(\mathbf{n}-\mathbf{m})w_{\beta,\kappa}(\mathbf{m})}{\sum_{\mathbf{n} \in \mathcal{C}_{j(\gamma+\gamma'),k} \colon \mathbf{m}\leq \mathbf{n}} w_{\beta,\kappa}(\mathbf{n}-\mathbf{m} )w_{\beta,\kappa}( \mathbf{m}_0)} 
		=
		w_{\beta,\kappa} (\mathbf{m}) /w_{\beta,\kappa}(\mathbf{m}_0).
	\end{align*} 
	We now claim that there is \( \varepsilon>0 \) such that 
	\[
		w_{\beta,\kappa}(\mathbf{m} )/w_{\beta,\kappa}(\mathbf{m}_0)
		\leq 
		w_{\beta+\varepsilon_n,\kappa}(\mathbf{m}).
		\]	
	To see this, note first that
		\[
		\mathbf{m}_0[p] =0\quad \text{for all } p \in C_2(B_N).
		\]	
		Next, note that for any \( e \in \gamma'' , \) we have
		\begin{align*}
			(\mathbf{m}[\hat \partial e] - \mathbf{m}[-\hat \partial e]) + k(\mathbf{m}[e] -\mathbf{m}[-e]) = k \mathbf{m}''[e].
		\end{align*}
		Also, note that if \( |\gamma_2| \leq \min (2R_n,T_n), \) then \( \sum_{e \in C_1(B_N)} \mathbf{m}[e]  \geq \sum_{e \in C_1(B_N)} \mathbf{m}_0[e] . \)  
		Together, these observations imply that if \(  \support_2 \mathbf{m} \leq \min (2R_n,T_n) \), then
			\[
		w_{\beta,\kappa}(\mathbf{m} )/w_{\beta,\kappa}(\mathbf{m}_0) 
		\leq 
		w_{j\beta/k,\kappa}(\mathbf{m}).
		\]	
		Similarly, if \( |\gamma_2| \geq \min (2R_n,T_n), \) then
			\[
		w_{\beta,\kappa}(\mathbf{m} )/w_{\beta,\kappa}(\mathbf{m}_0) 
		\leq 
		w_{(j/k + o_n(1))\beta,\kappa}(\mathbf{m}).
		\]	 
	Since \( \sum_{i=1}^\infty \kappa^i/i! \leq 1, \) by standard arguments (see, e.g., \citep{f2022b,f2024b}), for any edge \( e \in \gamma_n \) we have
	\begin{align*}
		&(\mathbf{P}^{j(\gamma_n+\gamma_n'),k} \times  \mathbf{P}^{0,k}) \bigl( |\support_2 \mathcal{C}_e| \geq \dist(e,\gamma_n') \bigr) 
		\\&\qquad \leq 
		\sum_{i = \dist(e,\gamma_n')}^\infty (1+2 \cdot 4 \cdot 2(d-1))^{2i} \biggl( \sum_{\ell=1}^\infty \frac{((j/k + o_n(1))\beta)^{\ell}}{\ell!} \biggr)^i \biggl( \sum_{\ell=0}^\infty \frac{\kappa^{\ell}}{\ell!} \biggr)^{4i}
		= 
		\frac{a^{\dist(e,\gamma_n')}}{1-a},
	\end{align*}
	where \( a \coloneqq \bigl(1+16(d-1)\bigr)^{2} ( e^{(j/k + o_n(1))\beta }-1) e^{4\kappa}.\)  
	Summing over all edges \( e \in \gamma_n ,\) we obtain
	\begin{align*}
		&\mathbf{P}^{j(\gamma_n+\gamma_n'),k} \times  \mathbf{P}^{0,k}(\neg \tilde {\mathcal{C}}_{j(\gamma_n+\gamma_n'),0}) 
		\leq 
		\sum_{e \in \gamma_n} 
		(\mathbf{P}^{j(\gamma_n+\gamma_n'),k} \times  \mathbf{P}^{0,k}) \bigl( |\support_2 \mathcal{C}_e| \geq \dist(e,\gamma_n') \bigr) 
		\\&\qquad
		\leq  2\sum_{i=1}^{\infty} \frac{a^i}{1-a} 
		+
		 \frac{T_n a^{R_n}}{1-a}
		 =
		 \frac{2a}{(1-a)^2} 
		+
		 \frac{T_n a^{R_n}}{1-a}.
	\end{align*}
	and hence
	\begin{align*}
		&\mathbf{P}^{j(\gamma_n+\gamma_n'),k} \times  \mathbf{P}^{0,k}( \tilde {\mathcal{C}}_{j(\gamma_n+\gamma_n'),0})  
		 >
		 1-  \frac{2a}{(1-a)^2} 
		-
		 \frac{T_n a^{R_n}}{1-a}.
	\end{align*}
	From this, the desired conclusion immediately follows.
\end{proof}

We next use the expansion of Section~\ref{section: charge k expansion} together with a cluster expansion to prove that the Marcu--Fredenhagen ratio is non-zero when \( \kappa \) is sufficiently large. 
	We illustrate the region where Proposition~\ref{proposition: higgs regime} applies in Figure~\ref{fig: expansionweight}.

\begin{proposition}[MF ratio in the Higgs regime]\label{proposition: higgs regime}
		Let \( \beta,\kappa > 0, \) and let \( j , k \geq 1 \) be such that \( k \mid j.\) Then, if \(  \sup_{|\theta',P|}\bigl| \phi_\gamma(P, \theta') \bigr|^{1/|(P, \theta')|} \) is sufficiently small, we have \( \liminf_{n \to \infty} r_{n,k,j} > 0. \) 
\end{proposition}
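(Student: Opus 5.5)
We will use the polymer representation of Proposition~\ref{proposition: expansion 2} for each of the four partition functions in
\[
r_{n,k,j}=\frac{Z_{j,k}[\gamma_n]\,Z_{j,k}[\gamma_n']}{Z_{j,k}[\gamma_n+\gamma_n']\,Z_{j,k}[0]},
\]
and then apply a standard cluster expansion of \(\log\) of the polymer sums (see, e.g., \citep[Chapter 5]{fv2007}). First, the prefactors in Proposition~\ref{proposition: expansion 2}. The factors \(e^{\beta|C_2(B_N)|}k^{-|C_1(B_N)|}\) and \(b_{0,k}^{|C_1(B_N)^+|}\) cancel between numerator and denominator. The factors \(b_{j,k}^{|\cdot|}\) also cancel, since \(|\gamma_n|+|\gamma_n'|=|\gamma_n+\gamma_n'|\). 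Here \(\gamma_n\) and \(\gamma_n'\) only share their endpoints, so their edge sets are disjoint. Note that \(k\mid j\) is what makes \(b_{j,k}\neq 0\). Indeed, \(\rho(\theta'(e))^j=1\) for \(\theta'\in\mathbb{Z}_k\), so the \(\theta'\)-dependence of the observable disappears, and the integral defining \(b_{j,k}\) is a positive Bessel-type quantity. This leaves
\[
r_{n,k,j}=\frac{\Xi_{\gamma_n}\,\Xi_{\gamma_n'}}{\Xi_{\gamma_n+\gamma_n'}\,\Xi_0},\qquad \Xi_\gamma\coloneqq\sum_{P,\theta'}\prod_{(P',\theta')\in\mathcal{P}_{P,\theta'}}\phi_\gamma(P',\theta').
\]
Each \(\Xi_\gamma\) is a polymer gas. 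Its polymers are connected pairs \((P',\theta')\), and two polymers are compatible when they are not adjacent. The activity of a polymer depends on \(\gamma\) only through the edges of \(\gamma\) adjacent to it.

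Second, we verify the Kotecký--Preiss condition. The hypothesis states that \(\sup|\phi_\gamma(P,\theta')|^{1/|(P,\theta')|}\le\epsilon\) with \(\epsilon\) small. The number of connected polymers of size \(s\) containing a given plaquette or edge is at most \(C_m^s\), uniformly in \(N\); the \(\theta'\)-labels contribute only a factor \(k^{O(s)}\). Hence, for \(\epsilon\) small depending on \(m\) and \(k\), the cluster expansion
\[
\log\Xi_\gamma=\sum_{X}\Psi_\gamma(X)
\]
converges absolutely. Moreover, the sum over clusters \(X\) of total size \(\ge s\) that touch a fixed cell is bounded by \((C\epsilon)^s\).

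Third, we take \(\log r_{n,k,j}\) and cancel clusters. Call a cluster \(X\) touching \(\gamma_n\) (resp.\ \(\gamma_n'\)) if one of its polymers is adjacent to an edge of \(\gamma_n\) (resp.\ \(\gamma_n'\)). The cancellation is as follows.
\begin{itemize}
\item If \(X\) touches neither path, its weight is the same in all four gases, so its contributions cancel.
\item If \(X\) touches \(\gamma_n\) but not \(\gamma_n'\), then \(\Psi_{\gamma_n}(X)=\Psi_{\gamma_n+\gamma_n'}(X)\) and \(\Psi_{\gamma_n'}(X)=\Psi_0(X)\), so these contributions cancel too.
\item The case where \(X\) touches \(\gamma_n'\) but not \(\gamma_n\) is symmetric.
\end{itemize}
Hence \(|\log r_{n,k,j}|\) is bounded by \(4\) times the total weight of clusters touching both \(\gamma_n\) and \(\gamma_n'\). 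Such a cluster containing a polymer next to \(e\in\gamma_n\) has size at least \(\dist(e,\gamma_n')\). Summing \((C\epsilon)^{\dist(e,\gamma_n')}\) over \(e\in\gamma_n\) gives a bound
\[
O\!\left(\frac{C\epsilon}{(1-C\epsilon)^2}\right)+O\!\left(T_n(C\epsilon)^{R_n}\right).
\]
The first term comes from the two corners near \(x_n\) and \(y_n\), and the second is exactly the bound used in the proof of Proposition~\ref{proposition: confinement}. Since \(T_n(C\epsilon)^{R_n}\to 0\), this gives \(\liminf_n r_{n,k,j}\ge e^{-C'}>0\).

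The main obstacle is the second step, making the cluster expansion legitimate. The activities \(\phi_\gamma\) are signed, and possibly complex once \(\theta'\)-phases enter. The polymer gas must also be set up correctly: the partition \(\mathcal{P}_{P,\theta'}\) should genuinely factor the integral into pieces that are independent under the product measure \(\prod_e e^{2\kappa\Re\rho(\theta(e))^k}/b_{0,k}\). This requires checking that distinct components share no edges, and that the \(\theta'\)-sum factorizes over components. That in turn needs care, since \(\theta'\) lives on edges while its support is measured through \(d\theta'\). We would first try to reorganize the sum by grouping configurations \(\theta'\) by the support of \(d\theta'\) and absorbing the gauge freedom of \(\theta'\). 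Once factorization holds, the Kotecký--Preiss bound follows from Lemma~\ref{lemma: upper bound of expansion} together with counting.
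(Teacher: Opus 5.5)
Your proposal is essentially the paper's proof. The paper likewise uses the polymer gas of Proposition~\ref{proposition: expansion 2} and runs the cluster expansion of $\log Z_{j,k}[\gamma]$ as in \citep[Theorem 1.1]{f2024b}, with convergence coming from the smallness of $\phi_\gamma$ (Lemma~\ref{lemma: upper bound of expansion}). It then cancels all clusters not touching both $\gamma_n$ and $\gamma_n'$ and concludes because the surviving clusters are exponentially small in $n$; your version is just more explicit than the paper's sketch.

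The factorization issue you flag is real, and the paper does not address it. Regrouping by $\omega=d\theta'$ is the right fix. It is legitimate because $k\mid j$ gives $\rho(\theta'(\gamma))^j=1$, and that is the actual role of $k\mid j$; your remark about $b_{j,k}$ is off, since $b_{j,k}$ is typically nonzero also when $k\nmid j$ (e.g.\ $b_{1,2}>0$). For the exactness constraint on $\omega$ to split over components, however, you must also call two plaquettes adjacent when they lie in the boundary of a common $3$-cell. Edge-sharing alone does not suffice, because opposite faces of a cube can both lie in $\support d\theta'$ without sharing an edge.
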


\begin{proof}
	Since the proof, with the setup from Section~\ref{section: new version of expansion}, is almost identical to the proof of~\citep[Theorem 1.1]{f2024b}, we only outline the differences here. 
	The polymers in the proof of~\citep[Theorem 1.1]{f2024b} are replaced with sets \( P \subseteq C_2(B_N) \) corresponding to connected graphs \( \mathcal{G}(P) \), and the weight of each cluster is given by \( \phi_\gamma \). , Lemma~\ref{lemma: upper bound of expansion} guarantees that the cluster expansion converges. The rest of the proof then follows the proof of~\citep[Theorem 1.1]{f2024b}, and the conclusion follows by noting that by Lemma~\ref{lemma: upper bound of expansion}, the weight assigned to a cluster of size at least \( T_n = Tn \) goes to zero exponentially fast in \( n. \)
\end{proof}

\begin{figure}[h]
	\centering
	\includegraphics[width=.5\textwidth]{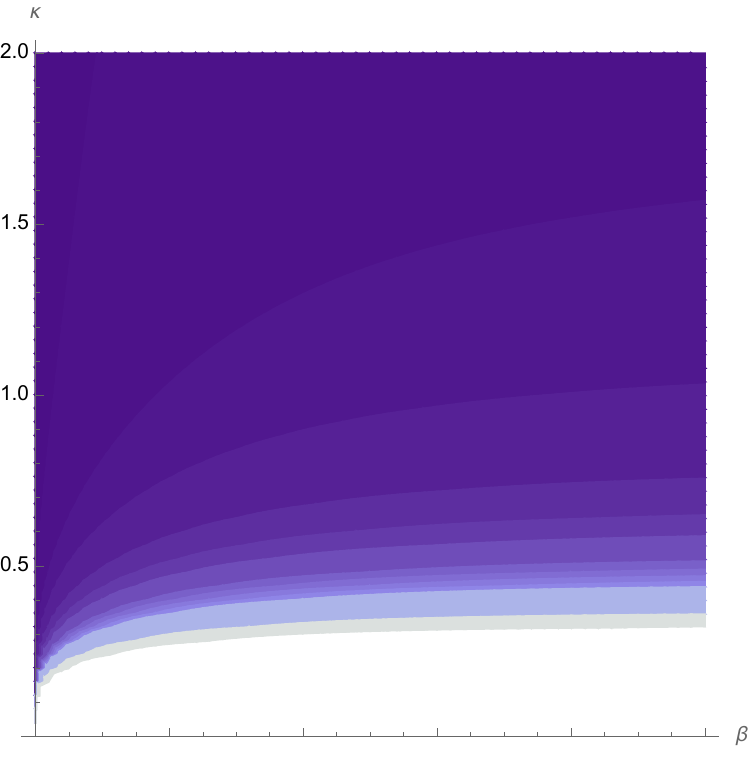}
	\caption{The Figure above shows the level sets of the function on the right hand side of~\eqref{eq: upper Holder bound 1}, which is an upper bound of the quantity that needs to be small for Proposition~\ref{proposition: higgs regime} to be applicable.} \label{fig: expansionweight}
\end{figure}

\begin{remark}
	In essence, having an expansion of \( \log Z[\gamma] \) into a sum over products of well separated clusters whose weight decays exponentially quickly in the size of their support is exactly what is needed to obtain \( \liminf r_n>0. \) In a similar vein, as was already pointed out in~\citep{os1978} and~\citep[Appendix A]{fs1979}, in the same setting one can also easily obtain analyticity of Wilson loop and Wilson line expectation, and also exponential decay of correlations.
\end{remark}

\subsection{Proofs of Theorem~\ref{theorem: main result charge nondiv} and Theorem~\ref{theorem: main result charge div}}\label{sec: summary}
In this section, we provide proofs of the two main results of the paper by providing references to earlier results in the paper that prove their constituent parts.

\begin{proof}[Proof of Theorem~\ref{theorem: main result charge nondiv}]
	The desired conclusion follows by combining the results of this section. In particular, 
	Theorem~\ref{theorem: main result charge nondiv}\ref{theorem: always zero if not divisible} follows by Lemma~\ref{lemma: positivity},  
	Theorem~\ref{theorem: main result charge nondiv}\ref{theorem: area law} is exactly Proposition~\ref{proposition: area law}, and
	Theorem~\ref{theorem: main result charge nondiv}\ref{theorem: always a region with perimeter law nondiv} is exactly Proposition~\ref{proposition: always a region with perimeter law}.
\end{proof}

\begin{proof}[Proof of Theorem~\ref{theorem: main result charge div}]
	The desired conclusion follows directly from combining the results of this section. In~particular, Theorem~\ref{theorem: main result charge div}\ref{theorem: always perimeter law} is exactly Proposition~\ref{proposition: always perimeter law},
	Theorem~\ref{theorem: main result charge div}\ref{theorem: main result div confinement} is exactly Proposition~\ref{proposition: confinement},
	Theorem~\ref{theorem: main result charge div}\ref{theorem: main result div free} is exactly Proposition~\ref{proposition: free}, and, finally, 
	Theorem~\ref{theorem: main result charge div}\ref{theorem: main result div higgs} is exactly Proposition~\ref{proposition: higgs regime}. 
\end{proof}


\begin{thebibliography}{99}



 

\bibitem{a2021} A. Adhikari, Wilson~loop expectations for non-abelian
gauge fields coupled to a Higgs boson at low and high disorder, Commun. Math. Phys. 405, 117 (2024). 

\bibitem{ca2025} A. Adhikari, S. Cao, Correlation decay for finite lattice gauge theories at weak coupling, Ann. Probab. 53(1): 140-174 (2025).  

	
	\bibitem{bbij1984} T. Balaban, D. Brydges, J. Imbrie, A. Jaffe, The mass gap for Higgs models on a unit lattice, Ann. Phys. 158, 281-319 (1984).
	
\bibitem{b1990} J. C. A. Barata, On the phase structure of the compact abelian lattice Higgs model, Commun. Math. Phys. 129, 511--523 (1990).

\bibitem{b1993} J. van den Berg, A uniqueness condition for Gibbs measures, with application to the {2-dimensional} Ising antiferromagnet, Commun. Math. Phys. 152, 161-166 (1993).

	\bibitem{bn1987} Borgs, C., Nill, F., The phase diagram of the abelian lattice Higgs model. A Review of Rigorous Results, Journal of Statistical Physics, Vol. 47, Nos. 5/6, (1987).
 
 
	
 
		


 
	\bibitem{c2020} S. Cao, Wilson loop expectations in lattice gauge theories with finite gauge groups. Comm. Math. Phys., 380, 1439--1505, (2020).

	
	\bibitem{c2018} S. Chatterjee, Yang-Mills for Probabilists, In: Friz, P., Konig, W., Mukherjee, C., Olla, S. (eds) Probability and analysis in interacting physical systems. VAR75 2016. Springer Proceedings in Mathematics \& Statistics, vol 283. Springer, Cham., (2020), 307--340.



	\bibitem{c2025} S. Chatterjee, O. Yakir, Correlation decay for \(U(1)\) lattice Higgs theory: the case of small mass, preprint available at \url{https://arxiv.org/abs/2509.19176} (2025).

    
    
	 
    
    
    
    
    
	\bibitem{ds2024} P. Duncan, B. Schweinhart, A sharp deconfinement transition for Potts lattice gauge theory in codimension two, Commun. Math. Phys. 406, 164 (2025).
	
    \bibitem{fv2025} M. P. Forsstr\"om, F. Viklund, A current expansion for Ising lattice gauge theory, preprint available at \url{https://arxiv.org/abs/2502.19942} (2025).
	
    \bibitem{fv2025b} M. P. Forsstr\"om, F. Viklund, Free energy and quark potential in Ising lattice gauge theory via cluster expansion, Int. Math. Res. Not, 2025(12). 

    \bibitem{f2024a} M. P. Forsstr\"om, Pure perimeter laws for Wilson~lines observables, preprint available at \url{https://arxiv.org/abs/2409.20085} (2024).
    
    \bibitem{f2024b} M. P. Forsstr\"om, The phase transition of the Marcu--Fredenhagen ratio in the abelian lattice Higgs model, Electron. J. Probab. 29: 1--36 (2024). 

    \bibitem{f2025} M. P. Forsstr\"om, Ornstein-Zernike decay of Wilson~line observables in the free phase of the \( \mathbb{Z}_2 \) lattice Higgs model, preprint available at \url{https://arxiv.org/abs/2504.10909} (2025).
 
    \bibitem{f2022b} M. P. Forsstr\"om, Wilson~lines in the Abelian lattice Higgs model, Comm. Math. Phys., Volume 405, article number 275, (2024).
     
 	\bibitem{flv} M. P. Forsstr\"om, J. Lenells, F. Viklund,  Wilson~lines in the lattice Higgs model at strong coupling, Ann. Appl. Probab. 35(1): 590--634 (2025).  
 
	\bibitem{flv2022} M. P. Forsstr\"om, J. Lenells, F. Viklund, Wilson~loops in finite abelian lattice gauge theories, Ann. Inst. Henri Poincar\'e, Probab. Stat. Vol. 58, Issue 4, (2022), 2129--2164.

	\bibitem{fs1979} E. Fradkin, S. Shenker, Phase diagrams of lattice gauge theories with Higgs fields, Phys. Rev. D Vol. 19, No. 12 (1979)


	\bibitem{fm1983} K. Fredenhagen, M. Marcu, Charged states in $\mathbb{Z}_2$ gauge theories Commun. Math. Phys. 92, 81--119 (1983).


	\bibitem{fm1988}  K. Fredenhagen, M. Marcu, Dual interpretation of order parameters for lattice gauge theories with matter fields, Nucl.{} Phys.{} B (Proc. Suppl.) 4 (1988) 352--357.

	\bibitem{fv2007} S. Friedli, Y. Velenik, Statistical mechanics of lattice systems: A Concrete Mathematical Introduction, Cambridge: Cambridge University Press, (2017), ISBN: 978-1-107-18482-4, DOI: 10.1017/9781316882603

	\bibitem{fs1982} J. Fr\"ohlich, T. Spencer, Massless phases and symmetry restoration in abelian gauge theories and spin systems, Comm. Math. Phys. 83(3): 411-454 (1982).

	\bibitem{gs2023} C. Garban, A. Supelveda, Improved spin-wave estimate for Wilson~loops in U(1) lattice gauge theory, Int.{} Math.{} Res.{} Not. (2023) 21, 18142--18198.

	\bibitem{g2011} K. Gregor, D. Huse, R. Moessner, S. L. Sondhi, Diagnosing deconfinement and topological order, New J.{} Phys.{} 13 025009 (2011).
 
    
    
	
	\bibitem{g1980} A. H. Guth, Existence Proof of a Nonconfining Phase in Four-Dimensional U(1) Lattice Gauge Theory,  Phys.{} Rev.{} D 21 (1980) 2291  
 
	\bibitem{gm1982} M. G\"opfert, G. Mack, Proof of confinement of static quarks in 3-dimensional U(1) lattice gauge theory for all values of the coupling constant, Comm.{} Math.{} Phys.{} 82(4): 545-606 (1981-1982).
	

	\bibitem{k1985} K. Kondo,  Order parameter for charge confinement and phase structures in the lattice U(1) gauge-Higgs model, Prog.{} Theor.{} Phys.{} 74, 152--169 (1985) 

	\bibitem{k1987} K. Kondo, Rigorous analyses on order parameters and charged states in the lattice abelian-Higgs model, Prog. Theor. Phys. Supplement No. 92, (1987), 72--107.

	
	\bibitem{os1978} K. Osterwalder, E. Seiler, Gauge Field Theories on a Lattice, Ann.{} Phys.{} 110, 440--471 (1978).
	

\bibitem{ssn2021} A. M. Somoza, P.  Serna, A. Nahum, Self-dual criticality in three-dimensional \( \mathbb{Z}_2 \) gauge theory with matter, Phys. Rev. X 11 (4), (2021).

\bibitem{spkl2026} C. Stahl, B. Placke, V. Khemani, Y. Li,
 Slow mixing and emergent one-form symmetries in three-dimensional \( \mathbb{Z}_2 \) gauge theory, preprint available as 	arXiv:2601.06010 (2026).
 

	\bibitem{w1971} F. J. Wegner, Duality in generalized Ising models and phase transitions without local order parameters, J.\ Math.\ Phys.\ 12(10) (1971), 2259--2272.
    
    \bibitem{w1974} K. Wilson, {Confinement of quarks}, Phys.\ Rev.\ D 10, (1974), 2445--2459. 
 



	


    

	
	
    




     
 \end{thebibliography}
\end{document}